\begin{document}
\numberwithin{equation}{section}

\def\1#1{\overline{#1}}
\def\2#1{\widetilde{#1}}
\def\3#1{\widehat{#1}}
\def\4#1{\mathbb{#1}}
\def\5#1{\frak{#1}}
\def\6#1{{\mathcal{#1}}}

\newcommand{\UH}{\mathbb{H}}
\newcommand{\de}{\partial}
\newcommand{\R}{\mathbb R}
\newcommand{\Ha}{\mathbb H}
\newcommand{\al}{\alpha}
\newcommand{\tr}{\widetilde{\rho}}
\newcommand{\tz}{\widetilde{\zeta}}
\newcommand{\tk}{\widetilde{C}}
\newcommand{\tv}{\widetilde{\varphi}}
\newcommand{\hv}{\hat{\varphi}}
\newcommand{\tu}{\tilde{u}}
\newcommand{\tF}{\tilde{F}}
\newcommand{\debar}{\overline{\de}}
\newcommand{\Z}{\mathbb Z}
\newcommand{\C}{\mathbb C}
\newcommand{\Po}{\mathbb P}
\newcommand{\zbar}{\overline{z}}
\newcommand{\G}{\mathcal{G}}
\newcommand{\So}{\mathcal{S}}
\newcommand{\Ko}{\mathcal{K}}
\newcommand{\U}{\mathcal{U}}
\newcommand{\B}{\mathbb B}
\newcommand{\oB}{\overline{\mathbb B}}
\newcommand{\Cur}{\mathcal D}
\newcommand{\Dis}{\mathcal Dis}
\newcommand{\Levi}{\mathcal L}
\newcommand{\SP}{\mathcal SP}
\newcommand{\Sp}{\mathcal Q}
\newcommand{\A}{\mathcal O^{k+\alpha}(\overline{\mathbb D},\C^n)}
\newcommand{\CA}{\mathcal C^{k+\alpha}(\de{\mathbb D},\C^n)}
\newcommand{\Ma}{\mathcal M}
\newcommand{\Ac}{\mathcal O^{k+\alpha}(\overline{\mathbb D},\C^{n}\times\C^{n-1})}
\newcommand{\Acc}{\mathcal O^{k-1+\alpha}(\overline{\mathbb D},\C)}
\newcommand{\Acr}{\mathcal O^{k+\alpha}(\overline{\mathbb D},\R^{n})}
\newcommand{\Co}{\mathcal C}
\newcommand{\Hol}{{\sf Hol}}
\newcommand{\Aut}{{\sf Aut}(\mathbb D)}
\newcommand{\D}{\mathbb D}
\newcommand{\oD}{\overline{\mathbb D}}
\newcommand{\oX}{\overline{X}}
\newcommand{\loc}{L^1_{\rm{loc}}}
\newcommand{\la}{\langle}
\newcommand{\ra}{\rangle}
\newcommand{\thh}{\tilde{h}}
\newcommand{\N}{\mathbb N}
\newcommand{\kd}{\kappa_D}
\newcommand{\Hr}{\mathbb H}
\newcommand{\ps}{{\sf Psh}}
\newcommand{\Hess}{{\sf Hess}}
\newcommand{\subh}{{\sf subh}}
\newcommand{\harm}{{\sf harm}}
\newcommand{\ph}{{\sf Ph}}
\newcommand{\tl}{\tilde{\lambda}}
\newcommand{\gdot}{\stackrel{\cdot}{g}}
\newcommand{\gddot}{\stackrel{\cdot\cdot}{g}}
\newcommand{\fdot}{\stackrel{\cdot}{f}}
\newcommand{\fddot}{\stackrel{\cdot\cdot}{f}}

\def\Re{{\sf Re}\,}
\def\Im{{\sf Im}\,}

\newcommand{\Real}{\mathbb{R}}
\newcommand{\Natural}{\mathbb{N}}
\newcommand{\Complex}{\mathbb{C}}
\newcommand{\ComplexE}{\overline{\mathbb{C}}}
\newcommand{\Int}{\mathbb{Z}}
\newcommand{\UD}{\mathbb{D}}
\newcommand{\clS}{\mathcal{S}}
\newcommand{\gtz}{\ge0}
\newcommand{\gt}{\ge}
\newcommand{\lt}{\le}
\newcommand{\fami}[1]{(#1_{s,t})}
\newcommand{\famc}[1]{(#1_t)}
\newcommand{\ts}{t\gt s\gtz}
\newcommand{\classCC}{\tilde{\mathcal C}}
\newcommand{\classS}{\mathcal S}

\newcommand{\Step}[2]{\begin{itemize}\item[{\bf Step~#1.}]{\it #2}\end{itemize}}
\newcommand{\step}[2]{\begin{itemize}\item[{\it Step~#1.}]{\it #2}\end{itemize}}
\newcommand{\proofbox}{\hfill$\Box$}

\newcommand{\mcite}[1]{\csname b@#1\endcsname}
\newcommand{\UC}{\mathbb{T}}

\newcommand{\Moeb}{\mathrm{M\ddot ob}}

\newcommand{\dAlg}{{\mathcal A}(\UD)}
\newcommand{\diam}{\mathrm{diam}}

\theoremstyle{theorem}
\newtheorem {result} {Theorem}
\setcounter {result} {64}
 \renewcommand{\theresult}{\char\arabic{result}}



\newcommand{\Spec}{\Lambda^d}
\newcommand{\SpecR}{\Lambda^d_R}
\newcommand{\Prend}{\mathrm P}




\def\cn{{\C^n}}
\def\cnn{{\C^{n'}}}
\def\ocn{\2{\C^n}}
\def\ocnn{\2{\C^{n'}}}
\def\je{{\6J}}
\def\jep{{\6J}_{p,p'}}
\def\th{\tilde{h}}


\def\dist{{\rm dist}}
\def\const{{\rm const}}
\def\rk{{\rm rank\,}}
\def\id{{\sf id}}
\def\aut{{\sf aut}}
\def\Aut{{\sf Aut}}
\def\CR{{\rm CR}}
\def\GL{{\sf GL}}
\def\Re{{\sf Re}\,}
\def\Im{{\sf Im}\,}
\def\U{{\sf U}}

\def\la{\langle}
\def\ra{\rangle}

\def\csub{\subset\subset}

\newcommand{\sgn}{\mathop{\mathrm{sgn}}}

\emergencystretch15pt \frenchspacing

\newtheorem{theorem}{Theorem}[section]
\newtheorem{lemma}[theorem]{Lemma}
\newtheorem{proposition}[theorem]{Proposition}
\newtheorem{corollary}[theorem]{Corollary}

\theoremstyle{definition}
\newtheorem{definition}[theorem]{Definition}
\newtheorem{example}[theorem]{Example}

\theoremstyle{remark}
\newtheorem{remark}[theorem]{Remark}
\numberwithin{equation}{section}

\newenvironment{mylist}{\begin{list}{}%
{\labelwidth=2em\leftmargin=\labelwidth\itemsep=.4ex plus.1ex
minus.1ex\topsep=.7ex plus.3ex
minus.2ex}%
\let\itm=\item\def\item[##1]{\itm[{\rm ##1}]}}{\end{list}}

\title[Local duality in Loewner equations.]{Local duality in Loewner equations.}

\author[M. D. Contreras]{Manuel D. Contreras\,$^\dag$}

\author[S. D\'{\i}az-Madrigal]{Santiago D\'{\i}az-Madrigal\,$^\ddag$}
\address{Camino de los Descubrimientos, s/n\\
Departamento de Matem\'{a}tica Aplicada II\\
Escuela T\'{e}cnica Superior de Ingenier\'{\i}a\\
Universidad de Sevilla\\
Sevilla, 41092\\
Spain.}\email{contreras@us.es} \email{madrigal@us.es}

\author[P. Gumenyuk]{Pavel Gumenyuk\,$^\sharp$}
\address{Dipartimento di Matematica, Universit\`a di Roma ``Tor Vergata", Via della Ricerca
Scientifica 1, 00133, Roma, Italia.} \email{gumenyuk@mat.uniroma2.it}

\date{\today }
\subjclass[2000]{Primary 30C80; Secondary 30D05, 30C35, 34M15}

\keywords{Univalent functions, Loewner chains, Loewner evolution, evolution families,  chordal
Loewner equation, Parametric Representation}

\thanks{$^\dag{~}^\ddag{~}^\sharp$ Partially supported  by the ESF
Networking Programme ``Harmonic and Complex Analysis and its Applications'' and
by \textit{La Consejer\'{\i}a Econom\'\i a, Innovaci\'on y Ciencia de la Junta
de Andaluc\'\i a} (research group FQM-133).}

\thanks{$^\dag{~}^\ddag$ Partially supported by \textit{Ministerio
de Ciencia e Innovaci\'on} and the European Union (FEDER), project
MTM2009-14694-C02-02.}

\thanks{$^\ddag{~}^\sharp$ Partially supported by  the {\it Institut Mittag-Leffler }(Djursholm, Sweden).}

\thanks{$^\sharp$ Partially supported by Progetto FIRB -- Futuro in Ricerca 2008 n.\,RBFR08B2HY
{\it''Geometria Differenziale Complessa e Dinamica Olomorfa"}.}

\begin{abstract}
Among diversity of frameworks and constructions introduced in Loewner Theory by
different authors, one can distinguish two closely related but still different
ways of reasoning, which colloquially may be described as ``increasing" and
``decreasing". In this paper we review in short the main types of
(deterministic) Loewner evolution discussed in the literature and describe in
detail the local duality between ``increasing" and ``decreasing" cases  within
the general unifying approach in Loewner Theory proposed recently
in~\cite{BCM1,BCM2,SMP}. In particular, we extend several results
of~\cite{Bauer}, which deals with the chordal Loewner evolution, to this
general setting. Although the duality is given by a simple change of the
parameter, not all the results for the ``decreasing" case can be obtained by
mere translating the corresponding results for the ``increasing" case. In
particular, as a byproduct of establishing local duality between evolution
families and their ``decreasing" counterparts we obtain a new characterization
of generalized Loewner chains.
\end{abstract}

\maketitle

\tableofcontents

\def\Gen{\mathcal G}
\def\Gena{\Gen_{\mathrm{Aut}}}
\def\RH{\mathbb C_+}
\def\mes{\mathop{\mathrm{mes}}}

\newcommand{\REM}[1]{\relax}

\section{Introduction}

\subsection{Loewner Theory in different frameworks}\label{SS_1}
Last decades there has been a great burst of interest to Loewner Theory both in
its deterministic variant, going back to Charles Loewner's seminal
paper~\cite{Loewner} of 1923, and more recent stochastic version by Odded
Schramm~\cite{Schramm}.

Motivated by extremal problems for univalent functions, Loewner~\cite{Loewner} considered
univalent holomorphic functions $f(z)=z+a_2z^2+\ldots$ mapping the unit
disk~$\UD:=\{z:|z|<1\}$ onto the complex plane~$\Complex$ minus a Jordan arc $\Gamma$
extending to infinity. Choose a (unique) parametrization $\gamma:[0,+\infty)\to\Complex$ of
the Jordan arc~$\Gamma$ satisfying the following two conditions: (1)~$\gamma(t)\to\infty$ as
$t\to+\infty$; (2) $f'_t(0)=e^t$ for each $t\ge0$, where $f_t$ stands for the conformal
mapping of~$\UD$ onto $\Omega_t:=\Complex\setminus\gamma\big([t,+\infty)\big)$ normalized by
$f_t(0)=0$, $f_t'(0)>0$. Loewner proved that for any Jordan arc $\Gamma$ with such
parametrization there exists a (unique) continuous function $k:[0,+\infty)\to\UC:=\{z:|z|=1\}$
such that the family $(f_t)$ satisfies the PDE
\begin{equation}\label{EQ_class_L-PDE}
\frac{\partial f_t(z)}{\partial t}=zf'_t(z)\frac{1+k(t)z}{1-k(t)z},
\end{equation}
 where $f_t'(z)$ stands for
the partial derivative of $f_t(z)$ w.r.t.~$z$. Actually, $(f_t)$ is the unique
solution to this PDE which is defined and univalent as a function of~$z$ for
all $t\ge0$ and all $z\in\UD$. Moreover, for all $s\ge0$,
\begin{equation}\label{EQ_class_L}
f_s=\lim_{t\to+\infty}e^t\varphi_{s,t},
\end{equation}
 where
$\varphi_{s,t}:=f_t^{-1}\circ f_s:\UD\to\UD$, $0\le s\le t$, solves the characteristic ODE
\begin{equation}\label{EQ_class_L-ODE}
\frac{d\varphi_{s,t}(z)}{dt}=-
\varphi_{s,t}(z)\frac{1+k(t)\varphi_{s,t}(z)}{1-k(t)\varphi_{s,t}(z)},~~~t\ge s,\quad
\varphi_{s,s}(z)=z,~~z\in\UD.
\end{equation}
He also proved that conversely, given a continuous
${k:[0,+\infty)\to\UC}$, equation~\eqref{EQ_class_L-ODE} and
formula~\eqref{EQ_class_L} define together, in a unique way, a
family~$(f_t)_{t\ge0}$ of holomorphic univalent functions in~$\UD$
satisfying~\eqref{EQ_class_L-PDE} and such that $f_s(\UD)\subset f_t(\UD)$
whenever $0\le s\le t$, although in this case $f_0$ need not be a conformal
mapping onto the complement of a Jordan arc\footnote{The first example of such
situation was discovered by Kufarev~\cite{Kufarev1}, who found a function $k$
in \eqref{EQ_class_L-ODE} for which $f_0$ maps $\UD$ onto a half-plane.}. The
differential equations~\eqref{EQ_class_L-PDE} and \eqref{EQ_class_L-ODE} are known
nowadays as the {\it Loewner PDE} and the {\it Loewner ODE}, respectively.

Later Kufarev~\cite{Kufarev} and Pommerenke~\cite{Pommerenke-65, Pommerenke}
extended Loewner's results for a more general class of families~$(f_t)$ by
replacing the Schwarz kernel in the Loewner differential equations with an
arbitrary holomorphic function of positive real part. In the theory that they
constructed, a {\it (classical) Loewner chain} is a family $(f_t)_{t\ge0}$ of
univalent holomorphic functions $f_t:\UD\to\Complex$ subject to the inclusion
condition $f_s(\UD)\subset f_t(\UD)$ whenever ${0\le s\le t}$ and the
normalization $f_t(z)=e^{t}(z+a_2(t)z^2+\ldots)$ for all ${t\ge0}$ and all
${z\in\UD}$. A {\it driving term} or a {\it classical Herglotz function} is a
function $p:\UD\times[0,+\infty)\to\{z:\Re z>0\}$ such that $p(z,\cdot)$ is
measurable in~$[0,+\infty)$ for all $z\in\UD$ and $p(\cdot,t)$ is holomorphic
in~$\UD$ with $p(0,t)=1$ for all~$t\ge0$. It is known that for any classical
Loewner chain~$(f_t)$ there exists a classical Herglotz function~$p$, unique up
to a null-set on the $t$-axis, such that $f_t$ satisfies the {\it (classical)
Loewner\,--\,Kufarev PDE} $\partial f_t(z)/\partial t=zf_t'p(z,t)$. In turn, the
functions $\varphi_{s,t}:=f_t^{-1}\circ f_s$, $0\le s\le t$, satisfy the
{\it Loewner\,--\,Kufarev} ODE
$(d/dt)\varphi_{s,t}(z)=-\varphi_{s,t}(z)p\big(\varphi_{s,t}(z),t\big)$ with
the initial condition $\varphi_{s,s}=\id_{\UD}$, and again as in Loewner's
original theory, the family $(f_t)$ can be reconstructed
using~\eqref{EQ_class_L}. Conversely, for any classical Herglotz function~$p$
there exists a unique classical Loewner chain $(f_t)$ satisfying the
Loewner\,--\,Kufarev PDE. This Loewner chain is given by~\eqref{EQ_class_L},
where $(\varphi_{s,t})$ is the unique solution to the Loewner\,--\,Kufarev ODE
with the initial condition~$\varphi_{s,s}=\id_{\UD}$.

In modern literature the process described by the Loewner\,--\,Kufarev
differential equations is referred to as the {\it radial Loewner evolution}. It
involves holomorphic functions normalized at the origin. An analogous process
for functions normalized at a boundary point was considered by Kufarev and his
students~\cite{Kufarev_etal}, see also~\cite{Sobolev1970}. Consider the upper
half-plane $\UH:=\{z:\Im z>0\}$ with a slit $\Gamma$ along a Jordan arc.
``Slit" means here that $\Gamma$ is the image of an injective continuous
function $\gamma:[0,T]\to\UH\cup\Real$, $T>0$, such that
$\gamma\big([0,T)\big)\subset\UH$ and $\gamma(T)\in\Real$. For each $t\in[0,T]$
there exists a unique conformal mapping $f_t$ of~$\UH$ onto
$\Omega_t:=\UH\setminus\gamma\big([t,T)\big)$ normalized by its expansion at
$\infty$, $f_t(z)=z+c(t)/z+\ldots$, $c(t)<0$. Changing the parametrization of
$\Gamma$, one may assume that $c(t)=2(t-T)$ for all $t\in[0,T]$. Similarly, to
Loewner's original setting, the functions $f_t$ satisfy a PDE, and the
functions $\varphi_{s,t}:=f_t^{-1}\circ f_s$, $0\le s\le t\le T$, satisfy the
corresponding characteristic ODE. Namely,
\begin{equation}\label{EQ_chordal_PDE}
\frac{\partial f_t(z)}{\partial t}=-f_t'(z)\frac{2}{\lambda(t)-z},
\end{equation}
\begin{equation}\label{EQ_chordal_ODE}
\frac{d\varphi_{s,t}(z)}{dt}=\frac{2}{\lambda(t)-\varphi_{s,t}(z)},~~~t\in[s,T],\quad
\varphi_{s,s}(z)=z,~~z\in\UH,
\end{equation}
where $\lambda:[0,T]\to\Real$ is a continuous function determined in a unique
way by the slit~$\Gamma$. Conversely, for any such function $\lambda$ there
exists a unique family of holomorphic univalent functions $f_t:\UH\to\UH$,
$f_t(z)=z-2(T-t)/z+\ldots$ (as $z\to\infty$), $t\in[0,T]$, with
$f_s(\UH)\subset f_t(\UH)$ whenever $0\le s\le t\le T$ satisfying
PDE~\eqref{EQ_chordal_PDE}. In turn, the corresponding family
$\varphi_{s,t}:=f_t^{-1}\circ f_s$, $0\le s\le t\le T$,
satisfies~\eqref{EQ_chordal_ODE}. These two differential equations are known in
the modern literature as the {\it chordal Loewner equations}\,\footnote{Up to
our best knowledge these equations are due to P.P.Kufarev, with the chordal
Loewner ODE for the first time mentioned in his paper~\cite{Kufarev1946}
without the factor~$2$. This factor is not principal, but makes the radial and
chordal Loewner ODEs to have the same asymptotic behaviour of the vector field
near the pole on the boundary. This is convenient for the study of relationship
between the geometry of the solutions and analytic properties of the control
functions $k$ and $\lambda$.}. Since $f_T=\id_\UH$, formula~\eqref{EQ_class_L}
for the radial case is now replaced by the simpler relation $f_t=\varphi_{t,T}$
for all $t\in[0,T]$.

Another approach to the chordal Loewner equation, based on the reduction to the
radial Loewner equation by means of a time-dependent conformal change of
variable can be found in the book~\cite{Aleksandrov}, which is also a good
source for the classical radial Loewner Theory and its application to Extremal
Problems for univalent functions.

Essentially equivalent generalizations of the slit chordal Loewner evolution
were given in~\cite{Aleks1983, AleksST, Goryainov-Ba}. In this case the Cauchy
kernel in the right-hand side of \eqref{EQ_chordal_PDE} and \eqref{EQ_chordal_ODE}
is replaced by a function with positive imaginary part, locally integrable
in~$t$ and holomorphic in~$z$ with a particular regularity and normalization
condition at~$\infty$.

The complete analogy between the radial and chordal slit Loewner evolutions,
described above, is broken by the fact that the former is defined for all
$t\ge0$, while the latter is limited in time to a compact interval. Consider
the following simple construction. Reparametrize the slit $\Gamma$ in $\UH$ in
such a way that $\gamma(0)\in\Real$, $\gamma\big((0,T]\big)\subset \UH$, and
the functions $f_t$ are normalized by $f_t(z)=z-2t/z+\ldots$ This simple trick
allows to consider chordal Loewner evolution defined for all $t\ge0$. In this
case the system of domains $\Omega_t:=f_t(\UH)$ is decreasing and obtained by
removing a growing slit from~$\UH$. Accordingly, the right-hand side of
equations~\eqref{EQ_chordal_PDE} and \eqref{EQ_chordal_ODE} change the sign.
This version of the chordal Loewner evolution appeared in the well-celebrated
paper~\cite{Schramm} of 2000 by Odded Schramm\footnote{In fact he also
considered the radial case of the Loewner evolution, but the chordal case
proved to be more important for applications in the context of SLE.}, who
constructed a stochastic version of Loewner evolution (SLE) replacing the
control function~$\lambda$ with the Brownian motion times a positive
coefficient. This invention by Schramm proved to be very important in
connection with its applications in Statistical Physics, see
e.g.~\cite{LawlerBook}.

The deterministic chordal Loewner evolution underlying SLE is driven by the
following initial value problem for the Loewner chordal ODE (with the ``$-$"
sign)
\begin{equation}\label{EQ_Scr_deter}
\frac{dw}{dt}=-\frac{2}{\lambda(t)-w},~~~t\ge0,\quad w|_{t=0}=z\in\UH.
\end{equation}
The connection between the analogue of the classical Loewner chains in this
setting and the above initial value problem is given by the following
assertion, see\footnote{For simplicity we formulate here only the special case
of Theorem~4.6 from~\cite{LawlerBook}, when the family of measures $\mu_t$,
defining the vector field in the right-hand side of (a generalization of) the
chordal Loewner equation, is a family of Dirac measures. In this case the
equation in \cite[Theorem 4.6]{LawlerBook} reduces to the above chordal Loewner
equation~\eqref{EQ_Scr_deter}.}, e.g.,\,\cite[Theorem 4.6, p.\,93; Remark~4.10,
p.\,95]{LawlerBook},
\begin{result}\label{TH_LawlerBook}
Let $\lambda:[0,+\infty)\to\Real$ be a continuous function. Then the following
assertions hold:
\begin{mylist}
\item[(i)] For every $z\in\UH$, there exists a unique maximal\,\footnote{In this case
``maximal", or ``non-extendable", means that there are no solutions $[0,T)\ni
t\mapsto \tilde w_z(t)\in\UH$ to~\eqref{EQ_Scr_deter}, with $T\in(0,+\infty]$,
such that $w_z$ is the restriction of $\tilde w_z$ to a proper subset
of~$[0,T)$.} solution~$w(t)=w_z(t)\in\UH$ to initial value
problem~\eqref{EQ_Scr_deter}.
\item[(ii)] For every $t\ge0$, the set $\Omega_t$ of all $z\in\UH$ for which
$w_z$ is defined at the point\,~$t$ is a simply connected domain and the
function $g_t:\Omega_t\to\UH$ defined by $g_t(z):=w_z(t)$ for all
$z\in\Omega_t$, is the unique conformal mapping of $\Omega_t$ onto $\UH$ such
that $g_t(z)-z\to0$ as $z\to\infty$, $z\in\UH$. Moreover, for each~$t\ge0$,
$$
g_t(z)=z+\frac{2t}{z}+O(1/|z|^2)\quad\text{as $z\to\infty$, $z\in\UH$}.
$$

\item[(iii)] The family $f_t:=g_t^{-1}:\UH\to\Omega_t$, $t\ge0$,
satisfies the following initial value problem for the chordal Loewner PDE:
\begin{equation*}
\frac{\partial f_t(z)}{\partial t}=\frac{2}{\lambda(t)-z}\frac{\partial
f_t(z)}{\partial z},\qquad f_0=\id_\UH.
\end{equation*}
\end{mylist}
\end{result}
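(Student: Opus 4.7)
The plan is to build the conformal maps $g_t$ directly from the ODE~\eqref{EQ_Scr_deter}, establish conformality and the hydrodynamic normalization by combining standard ODE theory with a monotonicity argument on imaginary parts, and then obtain the PDE in~(iii) as an almost immediate consequence of the inverse-function identity $g_t\circ f_t=\id_\UH$. Regarding~(i), the vector field $F(t,w):=-2/(\lambda(t)-w)$ is continuous in~$t$ on $[0,+\infty)$ and holomorphic, hence locally Lipschitz uniformly in $t$, in~$w$ on every compact subset of~$\UH$, since $\lambda(t)\in\R$ keeps the denominator bounded away from zero. The Picard--Lindel\"of theorem then supplies a unique local solution, which extends to a maximal one $w_z$ defined on some $[0,T_z)$ with $w_z(t)$ exiting every compact subset of $\UH$ as $t\to T_z^-$.

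The central computation for part~(ii) is the monotonicity
\[
\frac{d}{dt}\Im w_z(t)=\Im\!\left(-\frac{2}{\lambda(t)-w_z(t)}\right)=-\frac{2\,\Im w_z(t)}{|\lambda(t)-w_z(t)|^2}\le 0,
\]
so $\Im w_z$ is non-increasing and bounded by $\Im z$, and the solution can only leave $\UH$ by approaching~$\R$. Continuous and holomorphic dependence of solutions on initial data (standard ODE theory) give that $\Omega_t$ is open and $g_t\colon\Omega_t\to\UH$ is holomorphic; uniqueness of the ODE forces injectivity of $g_t$. For surjectivity, given any $w_0\in\UH$, solve the ODE \emph{backwards} from $w_0$ at time~$t$: the same computation, run in reverse, makes $\Im$ non-decreasing backwards, so the solution cannot touch~$\R$ and exists on the entire interval $[0,t]$, producing a pre-image $z\in\Omega_t$ with $g_t(z)=w_0$. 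Hence $g_t$ is a biholomorphism $\Omega_t\to\UH$, and in particular $\Omega_t$ is simply connected.

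The hydrodynamic expansion of $g_t$ comes from a bootstrap: with $M:=\sup_{[0,t]}|\lambda|$, for $|z|$ sufficiently large one has $|w_z(s)-\lambda(s)|\ge|z|/2$ on $[0,t]$, which via the ODE gives $|w_z(s)-z|=O(1/|z|)$ uniformly in $s\in[0,t]$. Writing $w_z(s)=z+h(s;z)$ and substituting back in the equation,
\[
\dot h(s;z)=\frac{2}{z-\lambda(s)+h(s;z)}=\frac{2}{z}+O(1/|z|^2),
\]
and integrating in $s$ from $0$ to $t$ yields $h(t;z)=2t/z+O(1/|z|^2)$, which is precisely the asserted expansion. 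Uniqueness of $g_t$ under this normalization is the standard observation that any two candidates differ by an element of $\Aut(\UH)$ fixing $\infty$ with unit derivative there, hence by the identity.

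For part~(iii), differentiate $g_t(f_t(z))=z$ in $t$ to obtain
\[
(\partial_t g_t)(f_t(z))+g_t'(f_t(z))\,\partial_t f_t(z)=0.
\]
The ODE~\eqref{EQ_Scr_deter} in pointwise form reads $(\partial_t g_t)(\zeta)=-2/(\lambda(t)-g_t(\zeta))$; substituting $\zeta=f_t(z)$ and using $g_t(f_t(z))=z$ gives $(\partial_t g_t)(f_t(z))=-2/(\lambda(t)-z)$, while $g_t'(f_t(z))\,f_t'(z)=1$ converts the above identity into $\partial_t f_t(z)=\bigl(2/(\lambda(t)-z)\bigr)f_t'(z)$, the PDE of~(iii); the initial condition $f_0=\id_\UH$ is immediate from $g_0=\id_\UH$. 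The most delicate step in the whole argument is the normalization statement in~(ii): the existence and injectivity of $g_t$ are straightforward, but extracting the precise coefficient $2t$ of $1/z$ in the expansion at infinity requires uniform control in both $|z|$ and $s$ to separate the $1/z$ and $1/z^2$ contributions, which is where the time parameter $t$ manifests in the normalization of $g_t$.
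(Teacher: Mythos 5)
Your argument is correct in substance, but note that the paper itself does not prove this statement: Theorem~\ref{TH_LawlerBook} is quoted from \cite[Theorem 4.6, Remark 4.10]{LawlerBook} and serves as motivation for the generalization, Theorem~\ref{TH_G-f_t}, which is what the paper actually proves (in Section~\ref{S_decChains}). The paper's route is entirely different from yours: for a general Herglotz vector field $G$ of order $d$ one fixes $T>0$, reverses time via $G_T(z,t):=G(z,T-t)$, invokes the correspondence between Herglotz vector fields and evolution families (Theorem~\ref{TH_BCM-EF-VF}) and between evolution families and Loewner chains (Theorem~\ref{TH_SMP-EF-LC}), and identifies $g_t$ on $\Omega_T$ with $\varphi^T_{0,T-t}\circ\big(\varphi^T_{0,T}\big)^{-1}$, so that $f_t=\varphi^T_{T-t,T}$; the PDE then comes from \cite[Theorem 6.6]{BCM1}. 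That machinery buys full generality (fields merely measurable and $L^d$ in $t$, no hydrodynamic normalization available in the disk), whereas your proof is the self-contained classical argument exploiting the explicit Cauchy-kernel form and continuity of $\lambda$: Picard--Lindel\"of, the monotone decay of $\Im w$, the backward flow for surjectivity, the bootstrap at infinity producing the coefficient $2t$, and the chain rule applied to $g_t\circ f_t=\id_\UH$. This is essentially the proof in \cite{LawlerBook}, and it is a perfectly legitimate (indeed more elementary) route to the quoted theorem, though it does not generalize to the setting of Theorem~\ref{TH_G-f_t}.

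Three places deserve tightening. First, your uniqueness argument for the normalization: an automorphism of $\UH$ ``fixing $\infty$ with unit derivative there'' can still be a real translation $w\mapsto w+b$; what forces it to be the identity is the full hydrodynamic condition $\phi(w)-w\to0$ as $w\to\infty$, which you do obtain from the expansions of the two candidate maps -- state it that way. Second, your bootstrap tacitly assumes $w_z$ is defined on all of $[0,t]$ for $|z|$ large; to avoid circularity, note that under the bootstrap hypothesis $|w_z(s)-\lambda(s)|\ge|z|/2$ one has $\tfrac{d}{ds}\Im w_z\ge -8\,\Im w_z/|z|^2$, so $\Im w_z$ stays positive on $[0,t]$ and such $z$ (however small $\Im z$) indeed lie in $\Omega_t$; this is also needed for the expansion ``as $z\to\infty$, $z\in\UH$'' to make sense. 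Third, in (iii) the differentiation of $g_t(f_t(z))=z$ in $t$ requires knowing that $t\mapsto f_t(z)$ is differentiable; this follows because $(t,\zeta)\mapsto\partial_t g_t(\zeta)=-2/(\lambda(t)-g_t(\zeta))$ is jointly continuous (here continuity of $\lambda$ is used) and $g_t'\ne0$, but a sentence of justification should be added. None of these is a genuine gap; they are routine completions of the classical argument.
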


Nowadays the chordal Loewner evolution is more often considered in this
``decreasing" framework rather than in the ``increasing" framework used by
Kufarev et al~\cite{Kufarev_etal}, see e.g.~\cite{Bauer}. In particular, the
geometry of the so-called Loewner hulls $K_t:=\UH\setminus f_t(\UH)$ was
studied in a series of papers launched by~\cite{Marshall-Rohde, Lind}. One of
the basic questions addressed is under which analytical conditions on the
control function~$\lambda$ the hulls $K_t$ are formed by a growing slit along a
Jordan arc and what are the relations between regularity of~$\lambda$ and
regularity of the slit. A similar question for the radial Loewner equation was
considered, apparently for the first time, by Kufarev~\cite{Kufarev1946} in
1946, who proved that if the control function~$k$ in
equation~\eqref{EQ_class_L-ODE} has bounded derivative on a segment $[0,T]$,
then the corresponding functions $\varphi_{s,t}$ map $\UD$ onto $\UD$ minus a
slit along a $C^1$-smooth Jordan arc. Without attempting to give the complete
list we mention some recent papers on the topic~\cite{LMR, Prokhorov-Vasiliev,
Lind-Rohde, Zhora, Wong, ProkhorovPreprint}.

In a similar way the radial Loewner evolution (both associated with the
original ``slit" Loewner equation and the more general Loewner\,--\,Kufarev
equation) can be considered in the ``decreasing" way when the functions $f_t$
are self-maps of~$\UD$ with the image domains~$f_t(\UD)$ forming a decreasing
family. Another variant, appeared in the recent publications, is the so-called
{\it whole-plane} radial Loewner evolution defined for all~$t\in\Real$, see
e.g.~\cite[\S4.3]{LawlerBook}, which however can be reduced to increasing
and decreasing Loewner evolutions.

In addition to the frameworks described above, we would like to mention several
studies~\cite{Goryainov-Ba, Goryainov, Goryainov1996, Dubovikov,
Goryainov-Kudryavtseva, GoryainovTalk} devoted to the infinitesimal
representation of semigroups in $\Hol(\UD,\UD)$, which fits very well in a
general heuristic scheme containing the increasing radial and chordal Loewner
evolutions. Although independent proofs have been given for different concrete
examples, there are common ideas which it is pertinent to describe here without
going much into details. Let $\mathcal U\subset\Hol(\UD,\UD)$ be a semigroup
w.r.t. the operation of composition containing the neutral element $\id_\UD$.
Consider a continuous one-parametric subsemigroup $(\phi_t)\subset\mathcal U$,
i.e. a family~$(\phi_t)_{t\ge0}$ in $\mathcal U$ such that $\phi_0=\id_\UD$,
$\phi_{t}\circ\phi_{s}=\phi_{t+s}$ for any $s,t\ge0$, and $\phi_{t}(z)\to z$ as
$t\to+0$ for all $z\in\UD$. It is known that for each $z\in\UD$ the function
$t\mapsto \phi_t(z)$ is the solution to the initial value problem
\begin{equation}\label{EQ_autonomous}
\frac{dw}{dt}=G(w),\quad w|_{t=0}=z,
\end{equation}
with some holomorphic function $G:\UD\to\Complex$ called the {\it infinitesimal
generator} of~$(\phi_t)$. Denote the set of all infinitesimal generators of
one-parametric semigroups in~$\mathcal U$ by $\Gen_{\mathcal U}$. In contrast
to the theory of finite-dimensional Lie groups, the union of all one-parametric
semigroups in~$\mathcal U$ does not coincide with~$\mathcal U$  in general.
That is why one has to consider the non-autonomous version of
equation~\eqref{EQ_autonomous},
\begin{equation}\label{EQ_non-aut}
dw/dt=G_t(w),
\end{equation}
where $t\mapsto G_t$ is a locally integrable family in~$\Gen_{\mathcal U}$. An
additional normalization is usually imposed on the family~$(G_t)$ depending on
the specific semigroup~$\mathcal U$ under consideration. For example, take the
semigroup $\mathcal U=\mathcal
U_{0}:=\{\varphi\in\Hol(\UD,\UD):\varphi(0)=0,\varphi'(0)>0\}$. It is known
that
$$\Gen_{\mathcal U_{0}}=\Big\{z\mapsto -zp(z):p\in\Hol\big(\UD,\{\zeta:\Re\zeta>0\}\big),~\Im p(0)=0\Big\}
\cup\Big\{G(z)\equiv0\Big\}.$$ Making a suitable change of the variable~$t$ in~\eqref{EQ_non-aut} we may
assume that $G_t'(0)=1$ for all $t\ge0$. Then we obtain the (radial) Loewner\,--\,Kufarev equation
mentioned above.

The solutions $[s,+\infty)\ni t\mapsto w=w_{z,s}(t)$ to
equation~\eqref{EQ_non-aut} with the initial condition~$w(s)=z\in\UD$ are
defined for all $z\in\UD$ whenever $0\le s\le t$  and depend on~$z$
holomorphically. The family $(\varphi_{s,t})$ defined by
$\varphi_{s,t}(z)=w_{z,s}(t)$ satisfies the algebraic conditions
$\varphi_{s,s}=\id_\UD$ and $\varphi_{u,t}\circ\varphi_{s,u}=\varphi_{s,t}$ for
any $s\ge0$, any $u\ge s$, and any $t\ge u$. A family $(\varphi_{s,t})_{0\le
s\le t}\subset \mathcal U$ is called an {\it evolution family} for $\mathcal U$
if it satisfies these two algebraic conditions plus a regularity condition
in~$t$ that insures the (local) absolute continuity of $\varphi_{s,t}$
w.r.t.~$t$. This condition depends on the specific semigroup~$\mathcal U$ under
consideration and involves, in one or another way, an additive functional
$\ell=\ell_{\mathcal U}:\mathcal U\to\Real$. (``Additive" means that
$\ell(\varphi\circ\psi)=\ell(\varphi)+\ell(\psi)$ for any
$\varphi,\psi\in\mathcal U$.) For the (radial) Loewner\,--\,Kufarev equation
and for the chordal Loewner equation, the appropriate condition is
$\ell(\varphi_{s,t})=t-s$ whenever $0\le s\le t$, where for the radial case
$\ell(\varphi):=-\log\varphi'(0)$ and for the chordal case $\ell(\varphi)$
equals the coefficient $c$ in the expansion $\varphi(z)=z-c/z+\ldots$
near~$\infty$. (Here we pass from $\UD$ to~$\UH$ by means of the Cayley map.)
Similarly, for a semigroup formed by univalent self-maps of the horizontal
strip~$\Pi:=\{z:\Im z\in(0,\pi)\}$ considered in~\cite{Dubovikov},
$c^+(\varphi_{s,t})-c^{-}(\varphi_{s,t})=:\ell(\varphi_{s,t})=t-s$ whenever
$0\le s\le t$, where $c^{\pm}(\varphi):=\lim\big(z-\varphi(z)\big)$ as
$z\to\pm\infty$ within a closed substrip of~$\Pi$. In contrast, for the
semigroup in $\Hol(\Pi,\Pi)$ considered by Goryainov~\cite{Goryainov}, one has
$0\le c^{\pm}(\varphi_{s,t})\le t-s$ and only for the evolution families formed
by the integrals of~\eqref{EQ_non-aut}.

The union $\tilde {\mathcal U}$ of all families~$(\varphi_{s,t})=(z\mapsto
w_{z,s}(t))$ generated by equation~\eqref{EQ_non-aut} forms a semigroup.
However, {\it a priori} $\tilde {\mathcal U}$ does not have to be a dense
subset of~$\mathcal U$. In fact, by the uniqueness theorem for the solutions
to~\eqref{EQ_non-aut}, $\tilde{\mathcal U}$ must contain only univalent
mappings. Hence the closure of $\tilde{\mathcal U}$ in $\Hol(\UD,\UD)$ will be
a proper subset of~$\mathcal U$ if the latter contains non-univalent functions
different from constants. In other cases we can have a kind of opposite
situation when $\tilde{\mathcal U}\supsetneq\mathcal U$. This happen, e.g., for
the semigroup of all normalized ``multislit" mappings, i.e. functions
$\varphi\in\Hol(\UD,\UD)$ with $\varphi(0)=0$ and $\varphi'(0)>0$ such that
$\UD\setminus\varphi(\UD)$ consists of a finite number of Jordan arcs.

However, for the semigroups~$\mathcal U$ studied in the above mentioned works,
the closure of~$\tilde{\mathcal U}$ (or even this set itself) coincides
with~$\mathcal U$, in which case one may say that the semigroup~$\mathcal U$
{\it admits infinitesimal representation} via equation~\eqref{EQ_non-aut}.
Moreover, for {\it a part of}\, these semigroups, it has been proved that
equation~\eqref{EQ_non-aut} establishes a one-to-one correspondence between the
evolution families~$(\varphi_{s,t})$ and the vector fields~$(G_t)$.

\subsection{General approach in Loewner Theory}\label{SS_gen}

Although the chordal and radial versions of the Loewner Theory share common
ideas and structure, on their own they can only be regarded as parallel but
independent theories. The approach of~\cite{Goryainov-Ba, Goryainov,
Goryainov1996, Dubovikov, Goryainov-Kudryavtseva, GoryainovTalk} does show that
there can be (and actually are) much more independent variants of Loewner
evolution bearing similar structures, but does not solve the problem of
constructing a unified theory covering all the cases.

Recently a new unifying approach has been suggested by Bracci and the two first
authors~\cite{BCM1, BCM2}, and further developed in~\cite{AbstractLoew, SMP,
SMP2}. Relying partially on the theory of one-parametric semigroups, which can
be regarded as the autonomous version of Loewner Theory, they have defined
evolution families in {\it the whole semigroup} $\Hol(\UD,\UD)$ as follows.
\begin{definition}(\cite{BCM1})~\label{D_BCM-EF}
A family $(\varphi_{s,t})_{t\ge s\ge0}$ of holomorphic self-maps of the unit
disc is an {\it evolution family of order $d$} with $d\in [1,+\infty]$ (in
short, an {\sl $L^d$-evolution family}) if
\begin{enumerate}
\item[EF1.] $\varphi_{s,s}=\id_{\mathbb{D}}$ for all $s\ge0$,

\item[EF2.] $\varphi_{s,t}=\varphi_{u,t}\circ\varphi_{s,u}$ whenever $0\leq
s\leq u\leq t<+\infty,$

\item[EF3.] for every $z\in\mathbb{D}$ and every $T>0$ there exists a
non-negative function $k_{z,T}\in L^{d}([0,T],\mathbb{R})$ such that
\[
|\varphi_{s,u}(z)-\varphi_{s,t}(z)|\leq\int_{u}^{t}k_{z,T}(\xi)d\xi
\]
whenever $0\leq s\leq u\leq t\leq T.$
\end{enumerate}
\end{definition}
Condition EF3 is to guarantee that any evolution family can be obtained via
solutions of an ODE which resembles both the radial and chordal
Loewner\,--\,Kufarev equations. The vector fields that drive this generalized
Loewner\,--\,Kufarev ODE are referred to as {\it Herglotz vector fields.}

\begin{definition}(\cite{BCM1})~\label{D_BCM-VF}
Let $d\in [1,+\infty]$. A {\it weak holomorphic vector field of order $d$} in
the unit disc $\mathbb{D}$ is a function
$G:\mathbb{D}\times\lbrack0,+\infty)\rightarrow \mathbb{C}$ with the following
properties:

\begin{enumerate}
\item[WHVF1.] for all $z\in\mathbb{D},$ the function $\lbrack
0,+\infty)\ni t\mapsto G(z,t)$ is measurable,

\item[WHVF2.] for all $t\in\lbrack0,+\infty),$ the function $
\mathbb{D}\ni z\mapsto G(z,t)$ is holomorphic,

\item[WHVF3.] for any compact set $K\subset\mathbb{D}$ and  any $T>0$ there
exists a non-negative function $k_{K,T}\in L^{d}([0,T],\mathbb{R})$ such that
\[
|G(z,t)|\leq k_{K,T}(t)
\]
for all $z\in K$ and for almost every $t\in\lbrack0,T].$
\end{enumerate}
We say that $G$ is a {\it (generalized) Herglotz vector field} of order $d$ if,
in addition to conditions~WHVF1\,--\,WHVF3 above, for almost every $t\in
[0,+\infty)$ the holomorphic function $G(\cdot, t)$ is an infinitesimal
generator of a one-parametric semigroup in $\Hol(\UD,\UD)$.
\end{definition}
\begin{remark} By~\cite[Theorem~4.8]{BCM1}, the class of all Herglotz vector
fields of order~$d$ coincides with that of all functions
$G:\UD\times[0,+\infty)\to\Complex$ which can represented in the form
\begin{equation}\label{EQ_BCM-BP}
G(z,t):=(\tau(t)-z)(1-\overline{\tau(t)} z)p(z,t)\quad \text{for all $z\in\D$
and a.e. $t\ge0$},
\end{equation}
where $\tau$ is any measurable function from~$[0,+\infty)$ to $\overline\UD$ and
$p:\UD\times[0,+\infty)\to\Complex$ satisfies the following conditions:
(1)~$p(\cdot,t)$ is holomorphic in~$\UD$ for every~$t\ge0$ with $\Re
p(z,t)\ge0$ for all $z\in\UD$ and $t\ge0$; (2)~$p(z,\cdot)$ is in $L^d_{\rm
loc}([0,+\infty),\Complex)$ for every~$z\in\UD$.

The generalized Loewner\,--\,Kufarev equation
\begin{equation}\label{EQ_BCM-ODE}
\frac{dw}{dt}=G(w,t),~~ t\ge s,\quad w(s)=z,
\end{equation}
resembles the radial Loewner\,--\,Kufarev ODE when $\tau\equiv0$ and
$p(0,t)\equiv1$. Furthermore, with the help of the Cayley map between~$\UD$ and
$\UH$, the chordal Loewner equation appears to be the special case
of~\eqref{EQ_BCM-ODE} with~$\tau\equiv1$.

In a similar way, different notions of evolution families considered previously
in the literature can be reduced to special cases of $L^d$-evolution families
defined above.
\end{remark}

Equation~\eqref{EQ_BCM-ODE} establishes a 1-to-1 correspondence between
evolution families of order~$d$ and Herglotz vector fields of the same order.
Namely, the following theorem takes place.

\begin{result}{\rm (\cite[Theorem~1.1]{BCM1})}\label{TH_BCM-EF-VF} For any evolution family
$(\varphi_{s,t})$ of order $d\in[1,+\infty]$ there exists an (essentially)
unique Herglotz vector field $G(z,t)$ of order $d$ such that for every~$z\in
\D$ and every~$s\ge0$ the function $[s,+\infty)\ni t\mapsto
w_{z,s}(t):=\varphi_{s,t}(z)$ solves the initial value
problem~\eqref{EQ_BCM-ODE}.

Conversely, given any Herglotz vector field $G(z,t)$ of order
$d\in[1,+\infty]$, for every~$z\in \D$ and every~$s\ge0$ there exists a unique
solution $[s,+\infty)\ni t\mapsto w_{z,s}(t)$ to the initial value
problem~\eqref{EQ_BCM-ODE}. The formula $\varphi_{s,t}(z):=w_{z,s}(t)$ for all
$s\ge0$, all $t\ge s$, and all $z\in\UD$, defines an evolution
family~$(\varphi_{s,t})$ of order~$d$.
\end{result}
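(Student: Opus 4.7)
The plan is to handle the two directions separately, with the forward direction (from evolution family to Herglotz vector field) being the substantial part and the reverse direction following from standard Carathéodory ODE theory plus the semigroup-generation property of $G(\cdot,t)$.

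For the forward direction, I would start by exploiting the algebraic identity $\varphi_{s,t+h}=\varphi_{t,t+h}\circ\varphi_{s,t}$ coming from EF2, which formally gives $\partial_t\varphi_{s,t}(z)=G(\varphi_{s,t}(z),t)$ once we define $G(w,t):=\lim_{h\to 0^+}\bigl(\varphi_{t,t+h}(w)-w\bigr)/h$. The first step is therefore to show that for a.e.\ $t\ge0$ this limit exists and defines a holomorphic function of $w\in\D$ which is moreover an infinitesimal generator of a one-parameter semigroup of $\Hol(\UD,\UD)$. To do so I would first fix a countable dense set of test points $z_n\in\D$ and use EF3 to conclude that each $t\mapsto \varphi_{s,t}(z_n)$ is locally absolutely continuous; combined with a diagonal/Vitali-type argument and Montel's theorem applied to the difference quotients $h^{-1}(\varphi_{t,t+h}-\id_\D)$ (which form a normal family because they are bounded in $\D$ by a multiple of the majorant from EF3), this would produce a full-measure set of good $t$'s at which $G(\cdot,t)$ is well defined and holomorphic. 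The crucial observation at this stage is that, since each $\varphi_{t,t+h}$ is a self-map of $\D$, the family $\{h^{-1}(\varphi_{t,t+h}-\id_\D)\}$ is of Berkson--Porta type in the limit, so $G(\cdot,t)$ automatically belongs to the class of infinitesimal generators (one may verify this by checking that $G(\cdot,t)$ admits the Berkson--Porta representation as in the Remark after Definition~\ref{D_BCM-VF}).

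Next I would verify the ODE itself: given $z\in\D$ and $s\ge0$, the function $t\mapsto \varphi_{s,t}(z)$ is absolutely continuous on compacta by EF3, hence differentiable a.e.; at any differentiability point $t_0$ which is also a Lebesgue point for $G(\cdot,t)$ on a neighborhood of $\varphi_{s,t_0}(z)$, the composition identity EF2 together with the definition of $G$ yields $(d/dt)\varphi_{s,t}(z)\big|_{t=t_0}=G(\varphi_{s,t_0}(z),t_0)$. The measurability WHVF1 follows because $G(z,t)$ can be recovered from the a.e.\ limit of the measurable functions $h^{-1}(\varphi_{t,t+h}(z)-z)$, and the local $L^d$-bound WHVF3 is inherited from the majorants $k_{z,T}$ in EF3 by a Cauchy-estimate argument transferring pointwise bounds at a dense set of $z$'s to uniform bounds on compact subsets. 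Uniqueness of $G$ (up to a null set in $t$) is immediate from the ODE, since two Herglotz vector fields producing the same evolution family must agree pointwise in $z$ for a.e.\ $t$.

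For the converse direction, given a Herglotz vector field $G$ of order $d$, classical Carathéodory theory applied to~\eqref{EQ_BCM-ODE} on any compact subset of $\D$ yields a unique maximal solution $w_{z,s}(\cdot)$. Using the Berkson--Porta representation~\eqref{EQ_BCM-BP} and the fact that infinitesimal generators on $\D$ point inward on $\partial\D$ (together with a Gronwall-type estimate controlling the hyperbolic distance from the trajectory to a fixed point), the solution is defined for all $t\ge s$ and stays in $\D$, so $\varphi_{s,t}(z):=w_{z,s}(t)$ is well defined. Holomorphic dependence on the initial condition $z$ follows from differentiating the integrated form of~\eqref{EQ_BCM-ODE} in $z$ and applying Cauchy estimates to $\partial_z G$, producing a complex-linear variational equation whose solution is continuous in $z$; then Morera together with Fubini confirms holomorphicity of $\varphi_{s,t}$. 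Property EF1 is built in, EF2 is a standard consequence of uniqueness of solutions, and EF3 follows by integrating the bound $|G(w,t)|\le k_{K,T}(t)$ along the trajectory (which stays in a fixed compact subset of $\D$ on any fixed time interval, by an a priori estimate analogous to the one used for global existence).

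The main obstacle is the forward direction, and within it the rigorous interchange of the limit $h\to 0^+$ defining $G(z,t)$ with the required regularity properties: extracting an a.e.-defined limit that is simultaneously holomorphic in $z$, measurable in $t$, and actually equals a semigroup generator is what requires the Berkson--Porta machinery together with a careful null-set control. Once this is in place, identifying $G$ as a Herglotz vector field of order exactly $d$ (rather than some weaker integrability class) is the other delicate point, since the $L^d$-majorant in EF3 only controls the displacement $|\varphi_{s,t}(z)-\varphi_{s,u}(z)|$ and one needs to transfer this control to pointwise $L^d$-bounds on $G(\cdot,t)$ via Lebesgue differentiation, uniformly on compact subsets of $\D$.
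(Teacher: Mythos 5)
A preliminary remark on the comparison you were asked for: the paper does not prove this statement at all — it is Theorem~1.1 of \cite{BCM1}, quoted as background and used as a black box — so your attempt can only be measured against the argument in that source, whose overall strategy (diagonal difference quotients, Berkson--Porta, Carath\'eodory ODE theory) you have indeed reconstructed in outline. Within that outline, however, the two genuinely hard steps are asserted rather than closed. First, the a.e.\ existence of $G(w,t):=\lim_{h\to0^+}\bigl(\varphi_{t,t+h}(w)-w\bigr)/h$ does not follow from the local absolute continuity of $t\mapsto\varphi_{s,t}(z_n)$ for fixed $s$: that gives a.e.\ differentiability in $t$ for each fixed $s$, with an exceptional null set depending on $s$, whereas $\bigl(\varphi_{t,t+h}(z_n)-z_n\bigr)/h$ is a diagonal quotient in which the base time moves with $t$. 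The known proof fixes a countable dense set of base times $s_k$, uses EF2 to write $\bigl(\varphi_{t,t+h}(w)-w\bigr)/h=\bigl(\varphi_{s_k,t+h}(z_n)-\varphi_{s_k,t}(z_n)\bigr)/h$ at the moving points $w=\varphi_{s_k,t}(z_n)$, where convergence does hold for all $t$ outside one null set, observes that for $s_k\uparrow t$ these points accumulate at every $z_n$, and only then invokes Vitali to upgrade to locally uniform convergence in $w$; without this device (or an equivalent one) your ``full-measure set of good $t$'s'' is not established. Second, the transfer of the $L^d$ majorants of EF3 from individual points to compact subsets of $\UD$ — needed both for the normality/Vitali step and for WHVF3 — is not a Cauchy-estimate matter: the tool that does the work is a Schwarz--Pick type two-point estimate bounding $|\varphi(\zeta)-\zeta|$ on a compact set by its values at two fixed points, precisely the content of Lemma~\ref{LM_pseudo} and Corollary~\ref{C_pseudo_symm} of the present paper and of the corresponding lemma in \cite{BCM1}.

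In the converse direction there is a similar gap: ``generators point inward on $\partial\UD$'' does not by itself exclude finite-time escape for a Carath\'eodory ODE. One needs a quantitative interior inequality, e.g.\ $\Re\bigl[\,\overline{w}\,G(w,t)\bigr]\le\bigl(1-|w|^2\bigr)\,|G(0,t)|$, which follows from the representation of infinitesimal generators and, once integrated, yields $\log\bigl(1-|w(t)|^2\bigr)\ge\log\bigl(1-|z|^2\bigr)-2\int_s^t|G(0,\xi)|\,d\xi$; this gives global existence and at the same time the uniform compactness of the trajectories on $[0,T]$ that your verification of EF3 for the constructed family silently relies on. Your ``Gronwall-type estimate on the hyperbolic distance'' gestures at this but does not supply the inequality that actually does the work. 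The remaining ingredients you list — closedness of the cone of infinitesimal generators under locally uniform limits (so that the limit of the normalized displacements of self-maps is again a generator), EF1--EF2 from uniqueness of solutions, holomorphic dependence on initial data, and the essential uniqueness of $G$ — are correctly identified and standard.
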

Here by {\it essential uniqueness} we mean that two Herglotz vector fields
$G_1(z,t)$ and $G_2(z,t)$ corresponding to the same evolution family must
coincide for a.e. $t\ge0$.

The general notion of a Loewner chain has been given\footnote{See
also~\cite{AbstractLoew} for a straightforward extension of this notion to
complex manifolds. It is worth mentioning that, in that paper, the construction
of an associated Loewner chain for a given evolution family is based on
arguments borrowed from Category Theory and so it differs notably from the one
we used in~\cite[Theorems 1.3 and~1.6]{SMP}.} in~\cite{SMP}.
\begin{definition}(\cite{SMP})~\label{D_SMP-LC}
A family $(f_t)_{t\ge0}$ of holomorphic maps of~$\UD$ is called a {\it Loewner
chain of order $d$} with $d\in [1,+\infty]$ (in short, an {\it $L^d$-Loewner
chain}) if it satisfies the following conditions:
\begin{enumerate}
\item[LC1.] each function $f_t:\D\to\C$ is univalent,

\item[LC2.] $f_s(\D)\subset f_t(\D)$ whenever $0\leq s < t<+\infty,$

\item[LC3.] for any compact set $K\subset\mathbb{D}$ and any $T>0$ there exists a
non-negative function $k_{K,T}\in L^{d}([0,T],\mathbb{R})$ such that
\[
|f_s(z)-f_t(z)|\leq\int_{s}^{t}k_{K,T}(\xi)d\xi
\]
whenever $z\in K$ and $0\leq s\leq t\leq T$.
\end{enumerate}
\end{definition}

This definition of (generalized) Loewner chains matches the abstract notion of
evolution family introduced in~\cite{BCM1}. In particular the following
statement holds.
\begin{result}{\rm (\cite[Theorem~1.3]{SMP})}\label{TH_SMP-EF-LC}
For any Loewner chain $(f_t)$ of order $d\in[1,+\infty]$, if we define
$$
\varphi_{s,t}:= f_t^{-1}\circ f_s \quad \text{whenever }0\le s\le t,
$$
then $(\varphi_{s,t})$ is an evolution family of the same order~$d$.
Conversely, for any evolution family $(\varphi_{s,t})$ of
order~$d\in[1,+\infty]$, there exists a Loewner chain $(f_t)$ of the same
order~$d$ such that
\begin{equation*}
 f_t\circ\varphi_{s,t}=f_s\quad  \text{whenever }0\le s\le t.
\end{equation*}
\end{result}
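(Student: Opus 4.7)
The forward implication proceeds by direct algebraic computation for EF1, EF2, and a Koebe distortion argument for EF3. Conditions EF1 and EF2 are immediate: $\varphi_{s,s} = f_s^{-1} \circ f_s = \id_\D$, while $\varphi_{u,t} \circ \varphi_{s,u} = f_t^{-1} \circ f_u \circ f_u^{-1} \circ f_s = f_t^{-1} \circ f_s = \varphi_{s,t}$, where the compositions are legitimate because $f_s(\D) \subset f_u(\D)$ by LC2. For EF3 the starting observation is that $f_t(\varphi_{s,t}(z)) = f_s(z) = f_u(\varphi_{s,u}(z))$, and hence
\begin{equation*}
\varphi_{s,t}(z) - \varphi_{s,u}(z) = f_t^{-1}\bigl(f_u(\varphi_{s,u}(z))\bigr) - f_t^{-1}\bigl(f_t(\varphi_{s,u}(z))\bigr).
\end{equation*}
Using Koebe's theorem one obtains uniform Lipschitz bounds for $f_t^{-1}$ on compact subsets of $f_t(\D)$, which, combined with LC3 to bound $|f_u(w) - f_t(w)|$ at the point $w = \varphi_{s,u}(z)$, produces the $L^d$-integral estimate required by EF3; the fact that $\{\varphi_{s,u}(z) : 0 \le s \le u \le T\}$ lies in a compact subset of $\D$ follows from the continuity of $(s,u)\mapsto \varphi_{s,u}(z)$, derived either from Theorem~\ref{TH_BCM-EF-VF} or directly from Schwarz--Pick.

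The converse is the more substantial half. Given an evolution family $(\varphi_{s,t})$, the plan is to construct $(f_t)$ as a subsequential limit of normalized ``truncated chains.'' For each $n \in \mathbb{N}$ and each $t \in [0, n]$, set
\begin{equation*}
f_t^{(n)}(z) := \frac{\varphi_{t,n}(z) - \varphi_{0,n}(0)}{\varphi_{0,n}'(0)}.
\end{equation*}
Invoking EF2 as $\varphi_{0,n} = \varphi_{t,n} \circ \varphi_{0,t}$ and the chain rule, one rewrites
\begin{equation*}
f_t^{(n)}(z) = \frac{1}{\varphi_{0,t}'(0)} \cdot \frac{\varphi_{t,n}(z) - \varphi_{t,n}(a_t)}{\varphi_{t,n}'(a_t)}, \qquad a_t := \varphi_{0,t}(0).
\end{equation*}
The second factor is a univalent function on $\D$ normalized to vanish at $a_t$ with derivative $1$ there, so (after pre-composing with a disc automorphism sending $0$ to $a_t$) the classical Koebe theorem shows that $\{f_t^{(n)}\}_{n\ge t}$ is a normal family on $\D$ for each fixed $t$.

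A diagonal argument over the countable dense subset $\mathbb{Q} \cap [0, +\infty)$ then yields a subsequence $n_k \to \infty$ along which $f_t^{(n_k)} \to f_t$ locally uniformly for each rational $t \ge 0$. Extend to every $t \ge 0$ by setting $f_t := f_{t'} \circ \varphi_{t,t'}$ for any rational $t' > t$; the identity $f_s^{(n)} = f_t^{(n)} \circ \varphi_{s,t}$ combined with EF2 makes this definition consistent. Then check the axioms: LC1 follows by Hurwitz's theorem (the normalization $(f_0^{(n)})'(0) = 1$ rules out degeneration), LC2 from the monotonicity $f_s^{(n)}(\D) \subset f_t^{(n)}(\D)$, and LC3 by passing to the limit in the identity $f_s^{(n)}(z) - f_t^{(n)}(z) = f_t^{(n)}(\varphi_{s,t}(z)) - f_t^{(n)}(z)$, estimated via EF3 together with uniform bounds on $|(f_t^{(n)})'|$ on compact sets.

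The principal obstacle is preserving the order $d$ in LC3 upon passing to the limit: the EF3 bound controls $|\varphi_{s,t}(z) - z|$ directly, but must be transferred to an estimate for $|f_t^{(n)}(\varphi_{s,t}(z)) - f_t^{(n)}(z)|$ through Koebe distortion bounds that are \emph{uniform} both in $t \in [0, T]$ and in $n$ along the subsequence. Establishing this uniformity --- ensuring in particular that $\varphi_{t,n}'(a_t)$ does not degenerate as $n \to \infty$ and that $f_t^{(n)}$ remains locally bounded --- is the technical heart of the argument, and it is here that the choice of normalization in the definition of $f_t^{(n)}$ plays a decisive role.
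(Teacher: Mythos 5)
This statement is quoted in the paper from \cite[Theorem~1.3]{SMP} and is not proved here, so there is no in-paper argument to compare with; your strategy (direct verification of EF1--EF3, then construction of the chain as a normalized normal-family limit of the maps $\varphi_{t,n}$) is the standard route, but as written it has two concrete soft spots. In the forward direction the load-bearing facts are that, for fixed $z$ and $T$, the set $\{\varphi_{s,u}(z):0\le s\le u\le T\}$ lies in a compact subset of $\UD$ and that the maps $f_t$ admit a two-point lower bound $|f_t(w_1)-f_t(w_2)|\ge c\,|w_1-w_2|$ on that compact set uniformly in $t\in[0,T]$. Neither of your suggested justifications delivers the first fact: the Schwarz--Pick lemma says nothing about where $\varphi_{s,u}(z)=f_u^{-1}(f_s(z))$ sits, and invoking Theorem~\ref{TH_BCM-EF-VF} is circular, since that theorem applies to evolution families, i.e.\ presupposes EF3, which is exactly what you are proving. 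What is needed is an inverse-convergence argument of the type of Lemma~\ref{LM_seq-conf} or Lemma~\ref{LM_phi-nepr}: LC3 makes $[0,T]\ni t\mapsto f_t\in\Hol(\UD,\Complex)$ continuous, hence $\{f_t\}_{t\in[0,T]}$ compact with all members univalent, and then a Rouch\'e/Hurwitz argument gives the compact containment, while the growth theorem applied to $f_t$ at points of a compact set gives the uniform lower Lipschitz bound. With these supplied, your EF3 estimate goes through.

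In the converse direction your normalization of $f_t^{(n)}$ is well chosen, but the final paragraph both leaves the decisive estimates unproved and misidentifies what must be controlled. The quantity $\varphi_{t,n}'(a_t)$ is irrelevant: it is divided out by your normalization, and after precomposing with the disc automorphism taking $0$ to $a_t$ the maps $\bigl(\varphi_{t,n}-\varphi_{t,n}(a_t)\bigr)/\varphi_{t,n}'(a_t)$ satisfy universal class $S$ growth and distortion bounds, independent of $n$. What genuinely needs uniform control for $t\in[0,T]$ is that $|a_t|=|\varphi_{0,t}(0)|$ stays away from $1$ and $|\varphi_{0,t}'(0)|$ stays away from $0$; both follow from EF3 (which gives continuity of $t\mapsto\varphi_{0,t}$, pointwise and then locally uniformly by Vitali), univalence of the $\varphi_{s,t}$ (Remark~\ref{RM_EF-uni}) and compactness of $[0,T]$. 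For LC3 you additionally need to upgrade the pointwise EF3 bound $|\varphi_{s,t}(z)-z|\le\int_s^t k_{z,T}(\xi)\,d\xi$ to a single $L^d$ majorant valid for all $z$ in a compact set $K$ --- this is precisely Corollary~\ref{C_pseudo_symm} (or the corresponding estimates of \cite{BCM1}) --- and to note that Schwarz--Pick together with the bound at $z=0$ keeps $\varphi_{s,t}(K)$ inside a fixed compact subset of $\UD$ on which the uniform bounds for $(f_t^{(n)})'$ apply. Two further small repairs: the convergence $f_t^{(n_k)}\to f_t$ in fact holds for every real $t$, since $f_t^{(n)}=f_{t'}^{(n)}\circ\varphi_{t,t'}$ for rational $t'>t$, so the LC3 estimate passes to the limit for all $s\le t$; non-degeneration for $t>0$ follows from $f_0=f_t\circ\varphi_{0,t}$ and $f_0'(0)=1$, not from the normalization at $t$ itself; and LC2 is obtained most cleanly from the limit identity $f_s=f_t\circ\varphi_{s,t}$ rather than from kernel convergence of the image domains.
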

In the situation of this theorem we say that the Loewner chain~$(f_t)$ and the
evolution family~$(\varphi_{s,t})$ are {\it associated with} each other. It was
proved in~\cite{SMP} that given an evolution family~$(\varphi_{s,t})$, an
associated Loewner chain~$(f_t)$ is unique up to conformal maps
of~$\cup_{t\ge0}f_t(\UD)$.

Thus in the framework of the approach described above the essence of Loewner
Theory is represented by the essentially 1-to-1 correspondence among Loewner
chains, evolution families, and Herglotz vector fields.

\begin{remark}\label{RM_EF-uni}
Definition~\ref{D_BCM-EF} does not require elements of an evolution family to
be univalent. However, this condition is satisfied. Indeed, by
Theorem~\ref{TH_BCM-EF-VF}, any evolution family $(\varphi_{s,t})$ can be
obtained via solutions to the generalized Loewner\,--\,Kufarev ODE. Hence the
univalence of $\varphi_{s,t}$'s follows from the uniqueness of solutions to
this~ODE. For an essentially different direct proof see
\cite[Proposition~3]{BCM2}.
\end{remark}

To conclude the section, we mention that an analogous approach has been
suggested for the Loewner Theory in the
annulus~\cite{SMP_annulusI,SMP_annulusII}. However, in this paper we restrict
ourselves to the Loewner Theory for simply connected domains.

\subsection{Aim of the paper and main results}\label{SS_results}

As one can see from stated in Subsect.\,\ref{SS_1}, there are essentially two
different ways to deal with Loewner evolution, regardless of which specific
variant of the theory we consider. One of them involves conformal maps $f_t$
onto an increasing family of simply connected domains described by (one or
another version of) the Loewner equations. The Loewner ODE being equipped with
an initial condition at the left end-point generate, in this case, a
non-autonomous semi-flow consisting of holomorphic self-maps $\varphi_{s,t}$ of
the reference domain ($\UD$ or $\UH$). The other way to study Loewner
evolution, which can be colloquially called ``decreasing", is formally obtained
by reversing the direction of the time parameter~$t$ and changing
correspondingly the sign in the Loewner equations. The initial condition for
the Loewner ODE is again given {\it at the left end-point} and hence in
contrast to the ``increasing" case, the solutions do not extend unrestrictedly
in time. The functions generated by such initial value problem maps a
decreasing family of simply connected domains conformally onto the reference
domain.

It is clear that locally in time there is no principal difference between these
two ways of reasoning: the connection between them is easily established by
considering, for an arbitrary $T>0$, the parameter change $t\mapsto T-t$. At
the same time the diversity of settings within which the Loewner evolution has
been considered in literature may cause certain difficulties. The primary aim
of this paper is to describe rigorously how the general approach discussed in
the previous subsection can be ``translated" to the ``decreasing" setting.
First of all we introduce ``decreasing" analogues of Loewner chains and
evolution families.

For a set $E\subset \Real$ we denote by $\Delta(E)$ the ``upper triangle"
$\{(s,t):s,t\in E,\,s\le t\}$.

\begin{definition}\label{D_decLC} Let $d\in [1,+\infty]$. A family
$(f_t)_{t\ge0}$ of holomorphic self-maps of the unit disk~$\UD$ will be called
a {\sl decreasing Loewner chain of order $d$} (or, in short, {\it decreasing
$L^d$-chain})  if it satisfies the following conditions:
\begin{enumerate}
\item[DC1.] each function $f_t:\D\to\D$ is univalent,

\item[DC2.] $f_0=\id_\UD$ and $f_t(\D)\subset f_s(\D)$ for all $(s,t)\in\Delta\big([0,+\infty)\big)$,

\item[DC3.] for any compact set $K\subset\mathbb{D}$ and  all $T>0$ there exists a
non-negative function $k_{K,T}\in L^{d}\big([0,T],\mathbb{R}\big)$ such that
\[
|f_s(z)-f_t(z)|\leq\int_{s}^{t}k_{K,T}(\xi)d\xi
\]
for all $z\in K$ and all $(s,t)\in \Delta\big([0,T]\big)$.
\end{enumerate}
\end{definition}
\begin{remark}
Note that the difference of this definition from that of an increasing Loewner
chain resides not only in the opposite inclusion sign in condition DC2. We also
assume that~$f_0=\id_\UD$, which we always can do by composing with a conformal
mapping, while in the ``increasing" case no such restriction is imposed, because
in general, it may be impossible to employ any conformal mapping except for
linear transformations of~$\Complex$.
\end{remark}
\begin{remark}
As we will see in Sect.~\ref{S_decChains}, by means of the Cayley map between
$\UD$ and $\UH$, the notion of {\it chordal Loewner families} introduced by
Bauer~\cite{Bauer} reduces to a special case of decreasing Loewner chains of
order~$d=+\infty$.
\end{remark}

Now we introduce reverse evolution families, the ``decreasing" counterparts of
$L^d$-evolution families.
\begin{definition}\label{D_Rev_EF} Let $d\in [1,+\infty]$. A family
$(\varphi_{s,t})_{0\leq s\leq t}$ of holomorphic self-maps of the unit
disk~$\UD$ is a {\it reverse evolution family of order $d$} if the following
conditions are fulfilled:
\begin{enumerate}
\item[REF1.] $\varphi_{s,s}=\id_{\mathbb{D}},$

\item[REF2.] $\varphi_{s,t}=\varphi_{s,u}\circ\varphi_{u,t}$ for all $0\leq
s\leq u\leq t<+\infty,$

\item[REF3.] for any  $z\in\mathbb{D}$ and any $T>0$ there exists a
non-negative function $k_{z,T}\in L^{d}([0,T],\mathbb{R})$ such that
\[
|\varphi_{s,u}(z)-\varphi_{s,t}(z)|\leq\int_{u}^{t}k_{z,T}(\xi)d\xi
\]
for all $s,t,u\in[0,T]$ satisfying inequality $s\le u\le t$.
\end{enumerate}
\end{definition}
\begin{remark}
Speaking informally, comparing with Definition~\ref{D_BCM-EF}, the parameters
$s$ and $t$ switch their roles. That is why condition EF3 is not converted to
condition REF3 under the ``time reversing trick". This difficulty does not
appear however in the classical settings such as radial and chordal Loewner
evolutions.
\end{remark}

Using the results of~\cite{BCM1, SMP} we deduce relations among decreasing
Loewner chains, reverse evolution families and Herglotz vector fields. In
particular, we obtain the following extension of Theorem~\ref{TH_LawlerBook}
(and its more abstract form in~\cite[Chapter~4]{LawlerBook}).
\begin{theorem}\label{TH_G-f_t}
Let $G$ be a Herglotz vector field of order $d\in[1,+\infty]$. Then:
\begin{mylist}
\item[(i)]
For every $z\in\UD$, there exists a unique maximal\,\footnote{In this case
``maximal", or ``non-extendable", means that there are no solutions $[0,T)\ni
t\mapsto \tilde w_z(t)\in\UD$ to~\eqref{ode1}, with $T\in(0,+\infty]$, such
that $w_z$ is the restriction of $\tilde w_z$ to a proper subset of~$[0,T)$.}
solution~$w(t)=w_z(t)\in\UD$ to the following initial value problem
\begin{equation}\label{ode1}
\frac{d w}{dt}=-G(w,t), \qquad w(0)=z\;.
\end{equation}
\item[(ii)] For every  $t\ge0$, the set $\Omega_t$ of all $z\in\UD$ for which
$w_z$ is defined at the point\,~$t$ is a simply connected domain and the
function $g_t:\Omega_t\to\UD$ defined by $g_t(z):=w_z(t)$ for all
$z\in\Omega_t$, maps $\Omega_t$ conformally onto $\UD$.

\item[(iii)] The functions $f_t:=g_t^{-1}:\UD\to\Omega_t$ form a decreasing Loewner chain
of order~$d$, which is the unique solution to the following initial value
problem for the Loewner\,--\,Kufarev PDE:
\begin{equation}\label{EQ_LK-PDE-ini}
\frac{\partial f_t(z)}{\partial t}=\frac{\partial f_t(z)}{\partial z}
G(z,t),\qquad f_0=\id_\UD.
\end{equation}
\end{mylist}
\end{theorem}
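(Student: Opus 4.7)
The plan is to reduce the theorem to the ``increasing'' theory of Theorems~\ref{TH_BCM-EF-VF} and~\ref{TH_SMP-EF-LC} via the time reversal $u\mapsto T-u$. Part~(i) follows at once from the Carath\'eodory existence and uniqueness theorem for non-autonomous ODEs: by~WHVF2 and~WHVF3, $G$ is holomorphic (and hence locally Lipschitz) in $z$ and its modulus is dominated on compacta of $\UD$ by a local $L^d$ function of~$t$; for each $z\in\UD$ this yields a unique local solution to~\eqref{ode1} that extends to a unique maximal one as long as the trajectory remains in~$\UD$.

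For~(ii) and~(iii), fix $T>0$ and introduce the time-reversed Herglotz vector field
$$
\tilde G_T(z,u):=\begin{cases}G(z,T-u),&u\in[0,T],\\ 0,&u>T.\end{cases}
$$
A routine check of~WHVF1--3 and of the infinitesimal-generator condition (which is preserved by the measure-preserving map $u\mapsto T-u$) shows that $\tilde G_T$ is again a Herglotz vector field of order~$d$. Theorem~\ref{TH_BCM-EF-VF} then furnishes the associated evolution family $(\tilde\varphi^{T}_{s,u})$, each element of which is univalent by Remark~\ref{RM_EF-uni}. The crucial observation is that the substitution $v(u):=w(T-u)$ converts~\eqref{ode1} on $[0,T_0]\subset[0,T]$ into the increasing Loewner--Kufarev ODE $dv/du=\tilde G_T(v,u)$ on $[T-T_0,T]$, and vice versa. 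Reading this substitution in both directions and invoking uniqueness of solutions to both ODEs delivers the identities
$$
\Omega_T=\tilde\varphi^{T}_{0,T}(\UD),\qquad g_T=\bigl(\tilde\varphi^{T}_{0,T}\bigr)^{-1},
$$
which settle~(ii). Since the construction involves only $G|_{\UD\times[0,T]}$, the assignment $f_t:=g_t^{-1}=\tilde\varphi^{\,t}_{0,t}$ consistently defines a univalent self-map of $\UD$ for every $t\ge0$.

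The properties~DC1 and~DC2 of Definition~\ref{D_decLC} are now immediate: univalence is Remark~\ref{RM_EF-uni}, $f_0=\id_\UD$ is built in, and for $s\le t$ a solution of~\eqref{ode1} defined on $[0,t]$ is {\it a fortiori} defined on $[0,s]$, whence $\Omega_t\subset\Omega_s$. To obtain~DC3 and the PDE~\eqref{EQ_LK-PDE-ini}, I would introduce the reverse evolution family $\psi_{s,t}:=f_s^{-1}\circ f_t$ (well-defined by~DC2) and exploit the cocycle identity $f_{t+h}=f_t\circ\psi_{t,t+h}$. Running~\eqref{ode1} backwards from time $t+h$ to time $t$ and using the integral form of the ODE gives
$$
\psi_{t,t+h}(z)=z+\int_{t}^{t+h}G\bigl(w(\xi),\xi\bigr)\,d\xi,
$$
where $w$ solves~\eqref{ode1} with $w(t+h)=z$. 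Dividing by $h$ and letting $h\to0^{+}$ at Lebesgue points of the $L^d$-majorant $k_{K,T}$ of~WHVF3 yields $\partial_h\psi_{t,t+h}(z)\big|_{h=0^+}=G(z,t)$ for a.e.\ $t$; combined with the holomorphy of $f_t$ this produces the PDE~\eqref{EQ_LK-PDE-ini}. Integrating the PDE and applying Cauchy's estimate $|f_t'(z)|\le 1/d(K,\partial\UD)$ on compacta $K\csub\UD$ (valid because $f_t$ maps into $\UD$) delivers~DC3.

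Finally, uniqueness in~(iii) reduces to part~(i): if $(\hat f_t)$ is any decreasing Loewner chain with $\hat f_0=\id_\UD$ satisfying~\eqref{EQ_LK-PDE-ini}, then for every $\zeta\in\hat f_t(\UD)$ the curve $\hat w(r):=\hat f_r^{-1}(\zeta)$, $r\in[0,t]$, differentiated using the PDE, solves~\eqref{ode1} with $\hat w(0)=\zeta$. Part~(i) then forces $\hat w=w_\zeta$, whence $\hat f_r^{-1}=g_r$ on $\hat f_r(\UD)$; using that $g_r\colon\Omega_r\to\UD$ is a conformal bijection one concludes $\hat f_r(\UD)=\Omega_r$ and $\hat f_r=f_r$. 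I expect the main technical step to be the passage from the integral form of the ODE to the pointwise PDE, where careful handling of Lebesgue points of the $L^d$-majorants from~WHVF3 is required.
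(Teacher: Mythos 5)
Parts (i) and (ii) of your proposal are correct and essentially identical to the paper's own argument: Carath\'eodory theory for (i), and for (ii) the time-reversed field $G(z,T-u)$ (extended by $0$ for $u>T$), Theorem~\ref{TH_BCM-EF-VF}, and the substitution $v(u)=w(T-u)$ together with uniqueness of solutions, giving $\Omega_T=\varphi^T_{0,T}(\UD)$ and $g_T=\bigl(\varphi^T_{0,T}\bigr)^{-1}$. In (iii) you depart from the paper, which obtains DC3 by noting $f_t=\varphi^T_{T-t,T}$, checking that $h^T_t:=\varphi^T_{t,T}$ (extended by $\id_\UD$) is an increasing Loewner chain via \cite[Lemma~3.2]{SMP}, and then gets the PDE from \cite[Theorem~6.6]{BCM1}; your direct route through the integral form of~\eqref{ode1} is legitimate and more self-contained, but as written it contains a genuine ordering flaw: you first derive the PDE as an a.e.\ right-derivative statement and then propose to ``integrate the PDE'' to deduce DC3. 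Recovering increments by integrating an a.e.\ derivative requires exactly the local absolute continuity in $t$ that DC3 asserts, so that step is circular. The repair is to use your own integral identity in inequality form first: $|f_{t+h}(z)-f_t(z)|=|f_t(\psi_{t,t+h}(z))-f_t(z)|\le M_K\,|\psi_{t,t+h}(z)-z|\le M_K\int_t^{t+h}k(\xi)\,d\xi$, which yields DC3 (after a short escape-time argument guaranteeing that the backward trajectory $w(\xi)$, $\xi\in[t,t+h]$, stays in a fixed compact subset of~$\UD$ so the WHVF3 majorant applies, and after removing the smallness restriction on $h$ by the triangle inequality); only then does the limit $h\to0^{+}$, taken at Lebesgue points of $G(z,\cdot)$ (not merely of the majorant $k_{K,T}$), give condition S4.

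The second, more substantive gap concerns uniqueness. The theorem claims that $(f_t)$ is the unique solution of~\eqref{EQ_LK-PDE-ini} in the sense of Definition~\ref{D_solution}, i.e.\ among all functions satisfying S1--S4, which need not be univalent self-maps of~$\UD$ nor form a decreasing chain. Your argument only compares $(f_t)$ with a competitor that is itself a decreasing Loewner chain, since it relies on inverting $\hat f_r$ and differentiating $r\mapsto\hat f_r^{-1}(\zeta)$; this proves strictly less than what is asserted. Moreover the differentiation step is not justified as stated: the PDE for $\hat f$ holds only for a.e.\ $t$ with an exceptional set a priori depending on~$z$, and one must first know that $r\mapsto\hat f_r^{-1}(\zeta)$ is absolutely continuous and that the difference quotients of $\hat f_r$ converge locally uniformly in~$z$ before the chain rule applies --- precisely the content of Proposition~\ref{PR_characteristics}(i)--(ii). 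The paper sidesteps both issues by running the characteristics in the opposite direction: for any solution $F$ with $F(\cdot,0)=\id_\UD$, Proposition~\ref{PR_characteristics}(iii) applied along the already-constructed solutions $t\mapsto g_t(z)$ of~\eqref{ode1} gives $F(g_t(z),t)=F(z,0)=z$, hence $F(\cdot,t)=g_t^{-1}=f_t$. Replacing your uniqueness paragraph by this argument (or by proving an analogue of Proposition~\ref{PR_characteristics}) is needed to establish the full statement.
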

\begin{remark}
Equation~\eqref{ode1}, as well as equation~\eqref{EQ_BCM-ODE} in
Subsect.\,\ref{SS_gen}, has discontinuous in $t$ right-hand side and hence
should be understood as a Carath\'eodory ODE, see e.g. \cite[\S I.1]{Filippov},
\cite[Ch.\,18]{Kurzweil} for the general theory of such equations,
or~\cite[\S2]{SMP_annulusI} for the basic results in the case of the right-hand
side holomorphic w.r.t.~$w$. As for PDE~\eqref{EQ_LK-PDE-ini} there does not
seem to be any abstract theory that suits completely well our purposes. We
discuss the notion of a solution to~\eqref{EQ_LK-PDE-ini} in
Subsect.\,\ref{SS_solution}.
\end{remark}

Some of the statements we get are obtained by careful "translating" of already
known results under the time parameter change, while other statements, in
particular those involving time regularity of evolution families, require
deeper ideas. As an interesting "byproduct" we prove the following assertion.

Denote by $AC^d(X,Y)$, where $X\subset \Real$ and $Y\subset\Complex$, the class
of all locally absolutely continuous functions $f:X\to Y$ whose derivative is
of class $L^d_{\rm loc}$.
\begin{theorem}\label{TH_LC}
Let $(f_t)_{t\ge0}$ be a family of holomorphic functions in the unit disc~$\UD$ satisfying
conditions LC1 and LC2. Then $(f_t)$ is an $L^d$-Loewner chain if and only if
\begin{itemize}
\item[LC3w.] For any  $T>0$ there exist two distinct points $\zeta_1,\zeta_2\in\UD$
such that the mappings $[0,T]\ni t\mapsto f_t(\zeta_j)$, $j=1,2$, both belong
to~$AC^d([0,T],\UD)$.
\end{itemize}
\end{theorem}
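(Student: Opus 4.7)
The $(\Rightarrow)$ direction is immediate: apply LC3 with $K = \{\zeta_1, \zeta_2\}$.

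For the converse, assume LC1, LC2, and LC3w. My plan is to show that the associated family $\varphi_{s,t} := f_t^{-1}\circ f_s$ of holomorphic self-maps of $\UD$ is an $L^d$-evolution family; the conclusion then follows from Theorem~\ref{TH_SMP-EF-LC}, together with the uniqueness of the Loewner chain associated with an evolution family up to post-composition with a univalent map on $\bigcup_t f_t(\UD)$.

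The family $(\varphi_{s,t})$ satisfies EF1 and EF2 automatically. As a first technical step I would establish normality: by LC3w the map $t\mapsto f_t(\zeta_j)$ is continuous on $[0,T]$, and by LC1 and compactness $|f_t(\zeta_1) - f_t(\zeta_2)|$ is bounded below by a positive constant. Koebe's distortion theorem, applied after the normalization $h_t = (f_t - f_t(\zeta_1))/(f_t(\zeta_2) - f_t(\zeta_1))$, yields local uniform boundedness of $\{f_t\}_{t\in[0,T]}$ on compacta of $\UD$, and hence of $\{f_t'\}_{t\in[0,T]}$ by Cauchy's estimate. A parallel distortion bound for $(f_t^{-1})$ on a neighborhood of the compact curves $\{f_\tau(\zeta_j) : \tau\in[0,T]\}$ combines with LC3w to give EF3 for $(\varphi_{s,t})$ at the two points $\zeta_1, \zeta_2$.

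The main obstacle is to propagate EF3 from two points to every point of $\UD$. The key ingredient is the following rigidity estimate: for any compact $K \subset \UD$ and distinct $\zeta_1, \zeta_2 \in \UD$ there exists $C = C(K, \zeta_1, \zeta_2) > 0$ such that
\[
\sup_{z \in K} |\varphi(z) - z| \;\le\; C \bigl(|\varphi(\zeta_1) - \zeta_1| + |\varphi(\zeta_2) - \zeta_2|\bigr) \qquad \text{for every } \varphi \in \Hol(\UD, \UD).
\]
I would prove this by a normal-family contradiction, using two facts: any sequence $\varphi_n \in \Hol(\UD,\UD)$ with $\varphi_n(\zeta_j) \to \zeta_j$ must converge locally uniformly to $\id_\UD$ (a subsequential limit is a self-map fixing two interior points, hence the identity); and the tangent cone to $\Hol(\UD, \UD)$ at $\id_\UD$ coincides with the cone of infinitesimal generators of one-parameter semigroups in $\Hol(\UD,\UD)$. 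By the Berkson--Porta representation $G(z) = (\tau - z)(1 - \bar\tau z) p(z)$, any such generator vanishing at two distinct interior points is identically zero, which contradicts a putative limit direction surviving the rescaling of a hypothetical counterexample sequence.

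Given the rigidity estimate, EF3 at an arbitrary $z \in \UD$ follows by writing $\varphi_{s,t}(z) - \varphi_{s,u}(z) = \varphi_{u,t}(w) - w$ with $w = \varphi_{s,u}(z)$ (via EF2) and applying the estimate to $\varphi_{u,t}$ at $w, \zeta_1, \zeta_2$. The rigidity constant can be taken uniform in $s, u \in [0,T]$ because Schwarz--Pick contraction, together with the already-established EF3 at $\zeta_1$, confines $\varphi_{s,u}(K)$ to a fixed compact subset of $\UD$. This produces the required $L^d$-bound, completing the verification that $(\varphi_{s,t})$ is an $L^d$-evolution family and hence that $(f_t)$ is an $L^d$-Loewner chain.
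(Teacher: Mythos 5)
Your forward direction is fine, and your key rigidity estimate is true --- it is exactly Corollary~\ref{C_pseudo_symm} of the paper (stated there for the pseudohyperbolic displacement) --- but your proposed proof of it has a genuine gap. In the blow-up argument you must extract a nonzero ``limit direction'' from the rescaled sequence $\psi_n:=(\varphi_n-\id)/\delta_n$ of a hypothetical counterexample, yet nothing you invoke shows that $(\psi_n)$ is locally uniformly bounded; without normality there is no subsequential limit to which the Berkson--Porta representation can be applied. Local boundedness of $(\psi_n)$ is precisely a Harnack-type inequality for the displacement $\varphi-\id$ of a self-map (its smallness at one or two points controls its size on compacta), i.e.\ essentially the estimate you are trying to prove, so the argument as written is circular; the qualitative facts you cite ($\varphi_n\to\id$ locally uniformly, the tangent-cone description of generators) do not supply it, and a three-circles type bound only gives a sublinear estimate. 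The paper instead derives the estimate quantitatively from Lemma~\ref{LM_pseudo}, i.e.\ from \cite[Lemma~3.8]{SMP_annulusI}. Your own toolkit can repair the step without any compactness argument: since $\varphi-\id$ is itself known to be an infinitesimal generator for every $\varphi\in\Hol(\UD,\UD)$, write $\varphi(z)-z=(\tau-z)(1-\overline{\tau}z)p(z)$ and use the Herglotz/Harnack estimates for $p$ at whichever of $\zeta_1,\zeta_2$ is at definite distance from $\tau$; this yields the linear two-point bound directly.

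The second gap is in the propagation/uniformity step. To apply the rigidity estimate to $\varphi_{u,t}$ at $w=\varphi_{s,u}(z)$ you need $\{\varphi_{s,u}(z):z\in K,\ 0\le s\le u\le T\}$ to lie in a fixed compact subset of~$\UD$, and your justification fails twice. First, ``the already-established EF3 at $\zeta_1$'' is not available: the distortion bound for $f_t^{-1}$ along the curves $t\mapsto f_t(\zeta_j)$ only yields the case $u=s$, namely $|\varphi_{s,t}(\zeta_j)-\zeta_j|\le M|f_t(\zeta_j)-f_s(\zeta_j)|$ (the paper's \eqref{EQ_phi-z0}); the general increment $|\varphi_{s,u}(\zeta_j)-\varphi_{s,t}(\zeta_j)|$ equals $|\varphi_{u,t}(w)-w|$ with $w=\varphi_{s,u}(\zeta_j)$ and thus already requires the confinement you are trying to establish. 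Second, even granting an $L^d$-bound $|\varphi_{s,u}(\zeta_1)-\zeta_1|\le\int_s^u k$, this integral need not be small, so it does not keep $\varphi_{s,u}(\zeta_1)$ (hence, via Schwarz--Pick, $\varphi_{s,u}(K)$) away from~$\partial\UD$. Such a confinement statement is true, but it needs a separate compactness argument of the type used for \eqref{EQ_rr}, relying on Lemma~\ref{LM_seq-conf}-style facts. The paper's proof of Theorem~\ref{TH_LC} sidesteps both issues: it applies the two-point estimate to $\varphi_{s,t}$ itself with the fixed reference points $\zeta_1,\zeta_2$ (no intermediate time $u$, see \eqref{EQ_phi-K}), bounds $|f_t-f_s|$ on compacta directly, uses only a short-time localization $t<s+\delta$ to control $|f_t'|$ at the relevant points, and removes that restriction at the end by additivity of the integral. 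Finally, your closing transfer from EF3 back to LC3 for the given $(f_t)$ also needs a word: the uniqueness result you quote is stated for Loewner chains, so you must first check that any family of univalent maps intertwining the evolution family factors as $F\circ g_t$ through an associated $L^d$-chain $(g_t)$ and then control $F'$ on the relevant compacta.
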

\begin{remark}
It follows immediately from Definitions~\ref{D_SMP-LC} and~\ref{D_decLC} that
the above theorem is equivalent to the following statement: {\it a family
$(f_t)_{t\ge0}$ of holomorphic functions $f_t:\UD\to\Complex$ satisfying
conditions DC1 and DC2 is a decreasing Loewner chain of order~$d$ if and only
if the above condition LC3w holds.}
\end{remark}

The rest of the paper is organized as follows. In Section~\ref{S_aux} we make
precise definition of what is meant by solutions to the generalized
Loewner\,--\,Kufarev PDE~\eqref{EQ_LK-PDE-ini} and prove some auxiliary
propositions and lemmas.

In Section~\ref{S_decChains} we discuss relationship between decreasing Loewner
chains and Herglotz vector fields. In particular, we prove
Theorem~\ref{TH_G-f_t}. We also establish a kind of inverse theorem
(Theorem~\ref{TH_f_t-G}).

Section~\ref{S_REF} is devoted to the study of the relationship of reversed
evolution families on the one side, and decreasing Loewner chains together with
the corresponding Herglotz vector fields on the other side.

Finally in Section~\ref{S2} we prove Theorem~\ref{TH_LC}.

\section{Auxiliary statements}\label{S_aux}
\subsection{Solutions to the generalized Loewner\,--\,Kufarev PDE}\label{SS_solution}
Consider the (generalized) Loewner\,--\,Kufarev PDE
\begin{equation}\label{EQ_LK-PDE}
\frac{\partial F(z,t)}{\partial t}=\epsilon\,\frac{\partial F(z,t)}{\partial z}
G(z,t),
\end{equation}
where $G$ is a Herglotz vector field and $\epsilon$ is a constant in
$\{-1,+1\}$ whose value depends on whether we deal with the increasing
($\epsilon\equiv-1$) or decreasing ($\epsilon\equiv1$) variant of the theory.

The formulation of Theorems~\ref{TH_G-f_t}, \ref{TH_f_t-G},
and~\ref{TH_REF-EQS} contains the notion of a solution to the
Loewner\,--\,Kufarev PDE, which should be defined more precisely. Similar to
the classical text~\cite[Chapter 6]{Pommerenke}, we give the definition as
follows.
\begin{definition}\label{D_solution}
By a solution to the Loewner\,--\,Kufarev PDE equation~\eqref{EQ_LK-PDE} we
mean a function $F:\UD\times E\to\Complex$, where $E\subset [0,+\infty)$ is an
interval, such that
\begin{mylist}
\item[S1.] $F$ is continuous in~$\UD\times E$;
\item[S2.] for every $t\in E$ the function $F(\cdot,t)$ is holomorphic in~$\UD$;
\item[S3.] for every $z\in\UD$ the function $F(z,\cdot)$ is locally absolutely continuous in~$E$;
\item[S4.] for every $z\in\UD$  equality~\eqref{EQ_LK-PDE} holds a.e. in $E$.
\end{mylist}
\end{definition}
\begin{remark}
Condition S4 means exactly the following: for each $z\in\UD$ there exists a
null-set $M_z\subset E$ such that~\eqref{EQ_LK-PDE} holds for all $z\in\UD$ and
all $t\in E\setminus M_z$. Note that {\it a priori} the sets $M_z$ can depend
on~$z$. We show below that under conditions S1\,--\,S3 this set can be chosen
{\it independently} of $z$.
\end{remark}

We complete this subsection with the proof of the following technical
statement, which might appear implicitly, in one or another context, in some
works on Loewner Theory.
\begin{proposition}\label{PR_characteristics}
Let $\epsilon\in\{-1,+1\}$ and let $G$ be a Herglotz vector field of some
order~$d\in[1,+\infty]$. Suppose that a function $F:\UD\times E\to\Complex$
satisfies conditions S1\,--\,S4. Then the following assertions hold:
\begin{mylist}
\item[(i)] The function $F(z,t)$ is locally absolutely continuous in\,~$t$
uniformly w.r.t. $z$ on every compact subset of~$\UD$, i.e. for any compact set
$K\subset \UD$ and any compact interval $I\subset E$ the mapping $t\mapsto
F(\cdot,t)\in\Hol(\UD,\C)$ is absolutely continuous on~$I$ w.r.t. the metric
$d_K(f,g):=\max_{z\in K}|f(z)-g(z)|$.

\item[(ii)] There is a null-set $N\subset [0,+\infty)$
such that for all $z\in\UD$ and all $t\in E\setminus N$ the partial derivative
$\partial F(z,t)/\partial t$ exists and equality~\eqref{EQ_LK-PDE} holds.
Moreover, for each $t\in E\setminus N$, $\big(F(z,t+h)-F(z,t)\big)/h\to\partial
F(z,t)/\partial t$ locally uniformly in $\UD$ as $h\to0$.

\item[(iii)] For any solution $t\mapsto w(t)\in\UD$ to the generalized Loewner\,--\,Kufarev ODE
\begin{equation}\label{EQ_LK_ODE_eps}
\dot w=-\epsilon\, G(w,t)
\end{equation}
the function $t\mapsto F\big(w(t),t\big)$ is constant in the domain of\, $w$
intersected with $E$.
\end{mylist}
\end{proposition}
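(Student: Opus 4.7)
My plan is to treat the three assertions in turn, with~(ii) being the delicate one. For part~(i), fix a compact $K\csub\UD$ and a compact interval $I=[a,b]\subset E$, and choose $T$ with $I\subset[0,T]$. Condition~S3 says $F(z,\cdot)$ is absolutely continuous on~$I$ for each fixed $z$, so the fundamental theorem of calculus combined with~S4 yields
\[
F(z,t_2)-F(z,t_1)=\epsilon\int_{t_1}^{t_2}\partial_z F(z,s)\,G(z,s)\,ds
\]
for every $z\in\UD$ and all $t_1,t_2\in I$. Continuity of $F$ on $\UD\times E$ bounds it on a slightly enlarged compact rectangle, and Cauchy's estimate then yields a uniform bound $|\partial_z F(z,s)|\le C(K,I)$ on $K\times I$. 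Coupled with the $L^d$-majorant $|G(z,s)|\le k_{K,T}(s)$ from~WHVF3, the integrand is dominated by $C(K,I)\,k_{K,T}(s)$, independent of $z\in K$, which is precisely the uniform absolute continuity in the metric $d_K$ asserted in~(i).

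The proof of~(ii) is the crux; the difficulty is to pick a single null-set $N\subset E$ working simultaneously for every $z\in\UD$, rather than one null-set per point. I would proceed as follows. Fix a countable dense set $Q\subset\UD$, an exhausting sequence $K_n\csub\UD$ of compacts, and a sequence $T_m\to\infty$; let $N_0:=\bigcup_{q\in Q}M_q$ be the union of the individual exceptional sets from~S4, and let $N_1$ be the set of points of $E$ failing to be Lebesgue points of at least one $k_{K_n,T_m}$. Both are null, so $N:=N_0\cup N_1$ is null. Now fix $t\in E\setminus N$ and put $\psi_h(z):=(F(z,t+h)-F(z,t))/h$ for small $h$. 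Part~(i) gives
\[
\sup_{z\in K_n}|\psi_h(z)|\le C(K_n,T_m)\cdot\frac{1}{h}\int_t^{t+h}k_{K_n,T_m}(s)\,ds,
\]
and the right-hand side converges to $C(K_n,T_m)\,k_{K_n,T_m}(t)$ because $t$ is a Lebesgue point; hence $\{\psi_h\}_h$ is locally uniformly bounded on~$\UD$. By Montel's theorem, every sequence $h_k\to 0$ admits a subsequence along which $\psi_{h_k}$ converges locally uniformly to some $\Psi\in\Hol(\UD,\C)$. For each $q\in Q$ the condition $t\notin M_q$ gives $\lim_{h\to 0}\psi_h(q)=\epsilon\,\partial_z F(q,t)G(q,t)$ from~S4, so $\Psi(q)=\epsilon\,\partial_z F(q,t)G(q,t)$ on the dense set~$Q$. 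The identity principle forces $\Psi(z)=\epsilon\,\partial_z F(z,t)G(z,t)$ on all of~$\UD$. Since every subsequential limit is this same holomorphic function, the full family $\psi_h$ converges locally uniformly as $h\to 0$, establishing both the locally uniform convergence of the difference quotients and the pointwise equality~\eqref{EQ_LK-PDE} at $t$ for every $z\in\UD$.

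For part~(iii), set $\Phi(t):=F(w(t),t)$ on the domain of~$w$ intersected with~$E$. The decomposition
\[
\Phi(t+h)-\Phi(t)=\bigl[F(w(t+h),t+h)-F(w(t),t+h)\bigr]+\bigl[F(w(t),t+h)-F(w(t),t)\bigr],
\]
together with Cauchy's estimate for $F(\cdot,t+h)$ on a fixed compact neighbourhood of $w(t)$, the absolute continuity of~$w$, and part~(i), shows that $\Phi$ is locally absolutely continuous. At each $t$ outside $N$ which is simultaneously a Lebesgue point of $s\mapsto G(w(s),s)$ (a set of full measure), the locally uniform convergence of $\psi_h$ obtained in~(ii) legitimises a chain-rule computation, giving
\[
\dot\Phi(t)=\partial_z F(w(t),t)\,\dot w(t)+\partial_t F(w(t),t)=\partial_z F(w(t),t)\bigl[-\epsilon\,G(w(t),t)+\epsilon\,G(w(t),t)\bigr]=0,
\]
so $\Phi$ is constant. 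The principal obstacle throughout is part~(ii): upgrading the pointwise a.e.\ validity of~\eqref{EQ_LK-PDE}, with a null-set that \emph{a priori} depends on~$z$, to simultaneous validity at every $z$ together with locally uniform convergence of the difference quotients. The interplay of part~(i), Montel compactness, and the identity principle is exactly what effects this upgrade, and part~(iii) then follows as a routine application of the resulting regularity.
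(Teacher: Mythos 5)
Your proposal is correct and follows essentially the same route as the paper: the integral representation plus a Cauchy-estimate/WHVF3 majorant for~(i), a single null-set built from countably many pointwise exceptional sets together with the a.e.\ differentiability (Lebesgue-point) set of the $L^d$-majorants, followed by normal-family compactness of the difference quotients and the identity principle (the paper phrases this as Vitali's principle) for~(ii), and local absolute continuity of $t\mapsto F(w(t),t)$ plus an a.e.\ chain-rule computation for~(iii). The only differences are cosmetic, e.g.\ your use of a dense countable set with Montel's theorem where the paper cites Vitali on a fixed sequence of points.
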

\begin{proof}
By S3 for any $z\in\UD$ the map $F(z,\cdot)$ is locally absolutely continuous
on~$E$. Hence by S4, for any $z\in \mathbb D$ and any $(s,t)\in\Delta(E)$,
\begin{equation}\label{EQ_viaDer}
F(z,t)-F(z,s)=\int_s^t \frac{\partial F(z,\xi)}{\partial
t}\,d\xi=\epsilon\int_s^t\frac{\partial F(z,\xi)}{\partial z} G(z,\xi)\,d\xi.
\end{equation}

Fix now any compact interval $I\subset E$ and any closed disk $K:=\{z:|z|\le
r\}$, with some $r\in(0,1)$. From S1 it follows that
$\big(F(\cdot,t)\big)_{t\in I}$ forms a compact subset in~$\Hol(\UD,\Complex)$.
Therefore there exists a constant $C>0$ such that $|\partial F(z,t)/\partial
z|\le C$ for all $z\in K$ and all $t\in I$.  Combined with
inequality~\eqref{EQ_viaDer} and condition WHVF3 from Definition~\ref{D_BCM-VF}
of a Herglotz vector field, this fact implies that there exists a non-negative
function $k_{I,K}\in L^d\big(I,\Real\big)$ such that
\begin{equation}\label{EQ_forLCh}
|F(z,t)-F(z,s)|\le C\int_s^t|G(z,\xi)|\,d\xi\le C\int_s^tk_{I,K}(\xi)\,d\xi
\end{equation}
for any $z\in K$ and any $(s,t)\in\Delta(I)$. Assertion~(i) follows now
immediately.

Applying the above argument to a sequence of closed disks $(K_n\subset\UD)$ and
a sequence of compact intervals $(I_k\subset E)$ whose unions cover $\UD$ and
$E$, respectively, one can easily construct a sequence of non-negative
functions $k_n\in L^d_{\rm loc}(E,\Real)$ such that for each $n\in\Natural$,
\begin{equation}\label{EQ_k_n}
|F(z,t)-F(z,s)|\le  \int_s^tk_n(\xi)\,d\xi
\end{equation}
for all  $(s,t)\in\Delta(E)$ and all $z\in K_n$. Choose any $t_0\in E$. Since
the functions
$$
Q_n(t):=\int_{t_0}^tk_n(\xi)\,d\xi, \quad t\in E,~n\in\Natural,
$$
are absolutely continuous, there exists a null-set $N_0\subset E$ such that
$Q_n'(t)$ exists finitely for all $t\in E\setminus N_0$ and all $n\in\Natural$.
Now from~\eqref{EQ_k_n} it follows that for each $t\in E\setminus N_0$ and some
$\varepsilon>0$ small enough the family $$\mathcal
F_t:=\left\{\frac{F(\cdot,t')-F(\cdot,t)}{t'-t}:t'\in
E,\,0<|t'-t|<\varepsilon\right\}$$ is bounded on each of $K_n$'s. Hence
$\mathcal F_t$ is relatively compact in $\Hol(\UD,\Complex)$ provided ${t\in
E\setminus N_0}$. For each $z\in\UD$ by $M_z$ we denote the null set in~$E$
aside which $F$ satisfies~\eqref{EQ_LK-PDE}. Following a standard technique we
apply now Vitali's principle to the family $\mathcal F_t$ and the set
$\{z_j:=1-1/(j+1):j\in\Natural\}$ in order to conclude that if $t\in E\setminus
N_0$ and $t\not\in\cup_{j\in\Natural}M_{z_j}$, then
$$\frac{F(z,t')-F(z,t)}{t'-t}\to\epsilon\,\frac{\partial F(z,t)}{\partial
z}G(z,t)$$ locally uniformly in~$\UD$ as $t'\to t$, $t'\in E$. This proves~(ii)
with $N:=N_0\cup \big(\cup_{j\in\Natural}M_{z_j}\big)$.

To prove (iii), we note that due to compactness of $\{F(\cdot,t)\}_{t\in I}$
for any compact interval~$I\subset E$, the limit
\begin{equation}\label{EQ_limit}
\lim_{\zeta\to z}\frac{F(\zeta,t)-F(z,t)}{\zeta-z}=\frac{\partial
F(z,t)}{\partial z}
\end{equation}
is attained uniformly w.r.t. $t\in I$ for any fixed $z\in\UD$. This justifies
the formal computation
\begin{multline*}
\frac{d F(w(t),t)}{dt}=\dot w(t)\left.\frac{\partial F(z,t)}{\partial
z}\right|_{z:=w(t)}+\left.\frac{\partial F(z,t)}{\partial
t}\right|_{z:=w(t)}\\=-\epsilon\, G(w(t),t)\left.\frac{\partial
F(z,t)}{\partial z}\right|_{z:=w(t)}+\left.\left(\epsilon\,\frac{\partial
F(z,t)}{\partial z}G(z,t)\right)\right|_{z:=w(t)}=0
\end{multline*}
for any $z\in\UD$ and a.e. $t\in E$. From (i) and compactness of
$\{F(\cdot,t)\}_{t\in I}$ for compact intervals~$I\subset E$ it follows that
$t\mapsto F(w(t),t)$ is locally absolutely continuous in its domain. Thus we
may conclude that this function is constant. The proof is now complete.
\end{proof}

\subsection{Some lemmas}
In what follows will take advantage of several lemmas proved in this
subsection. We start with a kind of ``rigidity" lemma, going back to the Schwarz
Lemma and the classical growth estimate for holomorphic functions with positive
real part.

Denote by $\rho_\UD$ the pseudohyperbolic distance in~$\UD$, i.e. let
$$
\rho_\UD(z,w):=\left|\frac{w-z}{1-\bar w\,z}\right|,\quad\text{for all
$z,w\in\UD$}.
$$
\begin{lemma}\label{LM_pseudo}
There exists a universal constant $C>0$ such that for any
$\psi\in\Hol(\UD,\UD)$ with $\psi(0)=0$, any $r\in(0,1)$, and any
$\zeta_0\in\UD\setminus\{0\}$,
\begin{equation}\label{EQ_psi}
\rho_\UD\big(\psi(\zeta),\zeta\big)\le\frac{C|\psi(\zeta_0)-\zeta_0|}{|\zeta_0|(1-|\zeta_0|^2)(1-r)^2}
\end{equation}
whenever $|\zeta|\le r$.
\end{lemma}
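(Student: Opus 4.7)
The plan is to exploit the hypothesis $\psi(0)=0$ in order to reduce the inequality to a classical estimate for holomorphic functions with non-negative real part, and then unwind the definition of $\rho_\UD$. Setting $g(z):=\psi(z)/z$, extended holomorphically at the origin, the Schwarz lemma gives $g\in\Hol(\UD,\oD)$, so $p(z):=1-g(z)$ is a holomorphic function from $\UD$ into the closed right half-plane satisfying
$$|p(\zeta_0)|=\left|1-\frac{\psi(\zeta_0)}{\zeta_0}\right|=\frac{|\psi(\zeta_0)-\zeta_0|}{|\zeta_0|}.$$
The degenerate case when $g$ is constant (equivalently, $\psi(z)=cz$) is elementary, so from now on I would assume $g$ nonconstant, whence $|g(\zeta_0)|<1$ and in particular $\Re p(\zeta_0)>0$.

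Next, write $a:=p(\zeta_0)$ and form the Möbius transformation $T(w):=(w-a)/(w+\bar a)$, which carries the right half-plane onto $\oD$ and sends $a$ to $0$. Then $q:=T\circ p$ lies in $\Hol(\UD,\oD)$ with $q(\zeta_0)=0$, so the Schwarz lemma yields $|q(z)|\le\rho_\UD(z,\zeta_0)$. Inverting $T$ explicitly gives the identity
$$p(z)-a=\frac{2\,\Re(a)\,q(z)}{1-q(z)},$$
and since $\Re a\le|a|$ this produces
$$|p(z)|\le|a|\frac{1+|q(z)|}{1-|q(z)|}\le|a|\frac{1+\rho_\UD(z,\zeta_0)}{1-\rho_\UD(z,\zeta_0)}.$$

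For $|\zeta|\le r$ the elementary estimate $\rho_\UD(\zeta,\zeta_0)\le(r+|\zeta_0|)/(1+r|\zeta_0|)$ gives
$$\frac{1+\rho_\UD(\zeta,\zeta_0)}{1-\rho_\UD(\zeta,\zeta_0)}\le\frac{2(1+r|\zeta_0|)}{(1-r)(1-|\zeta_0|)}\le\frac{4}{(1-r)(1-|\zeta_0|)},$$
so that $|\psi(\zeta)-\zeta|=|\zeta|\,|p(\zeta)|\le 4r|\psi(\zeta_0)-\zeta_0|/\big[|\zeta_0|(1-r)(1-|\zeta_0|)\big]$. To convert the Euclidean distance into the pseudohyperbolic one, I would invoke Schwarz once more: $\psi(0)=0$ gives $|\psi(\zeta)|\le|\zeta|\le r$, so $|1-\bar\zeta\,\psi(\zeta)|\ge 1-r^2$ and therefore $\rho_\UD(\psi(\zeta),\zeta)\le|\psi(\zeta)-\zeta|/(1-r^2)$. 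Substituting and absorbing the harmless factor $(1+|\zeta_0|)/(1+r)\le 2$ into the universal constant yields the conclusion with, say, $C=8$.

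The decisive step --- and the only one that is not purely mechanical --- is the passage from the self-map $g$ of $\oD$ to the function $p=1-g$ with non-negative real part, which opens the door to the sharp Schwarz estimate for $q=T\circ p$ vanishing at $\zeta_0$. Everything else is algebraic manipulation plus the standard Schwarz--Pick bookkeeping that turns a pseudohyperbolic bound into the announced quantitative rigidity statement.
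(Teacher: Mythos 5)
Your proof is correct, and I checked the constants: the identity $p-a=2\Re(a)\,q/(1-q)$, the bound $(1+\rho)/(1-\rho)\le 4/\big((1-r)(1-|\zeta_0|)\big)$, and the final absorption $4r(1+|\zeta_0|)/(1+r)\le 8$ all hold, so $C=8$ works; the only point worth making explicit is that in the nonconstant case $\Re p>0$ throughout $\UD$ (open mapping/minimum principle), so $q=T\circ p$ maps into the open disk and the Schwarz--Pick inequality $|q(z)|\le\rho_\UD(z,\zeta_0)$ applies without fuss. Your route differs from the paper's in that the paper does not prove the Euclidean estimate at all: it simply quotes Lemma~3.8 of the annulus paper \cite{SMP_annulusI}, which states precisely that $|\psi(\zeta)-\zeta|\le C|\psi(\zeta_0)-\zeta_0|/\big(|\zeta_0|(1-|\zeta_0|^2)(1-r^2)\big)$ for $|\zeta|\le r$, and then performs exactly your last step, namely $\big|1-\bar\zeta\,\psi(\zeta)\big|\ge 1-|\zeta|^2$ from the Schwarz Lemma together with $1/(1-r^2)\le 1/(1-r)$, to pass to $\rho_\UD$. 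So what you have done, in effect, is reprove the cited lemma from scratch via the factorization $\psi(z)=z\,g(z)$, $p=1-g$ with $\Re p\ge 0$, and the half-plane Schwarz--Pick estimate for $T\circ p$ vanishing at $\zeta_0$. Your version buys self-containedness and an explicit universal constant; the paper's version buys brevity by outsourcing the quantitative rigidity statement to a reference where the same circle of ideas (positive-real-part functions and Schwarz--Pick) is carried out.
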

\begin{proof}
The proof is based on~\cite[Lemma 3.8]{SMP_annulusI}, according to which
$$
\big|\psi(\zeta)-\zeta\big|\le\frac{C|\psi(\zeta_0)-\zeta_0|}{|\zeta_0|(1-|\zeta_0|^2)(1-r^2)}
\quad\text{whenever $|\zeta|\le r$,}
$$
where $C>0$ is a universal constant.

From the Schwarz Lemma it follows that $\big|1-\bar \zeta\,\psi(\zeta)\big|\ge
1-|\zeta|^2$. Hence to deduce~\eqref{EQ_psi} it remains to notice that
$1/(1-r^2)\le1/(1-r)$.
\end{proof}

\begin{corollary}\label{C_pseudo}
For any $\varphi\in\Hol(\UD,\UD)$, any $\zeta, \zeta_0\in\UD\setminus\{0\}$, and any
$r\in(0,1)$,
\begin{equation}\label{EQ_pseudo}
\rho_\UD\big(\varphi(\zeta),\zeta\big)\le
C\frac{|\varphi(\zeta_0)-\zeta_0|+4|\varphi(0)|}{|\zeta_0|(1-|\zeta_0|)^2(1-r)^2}
\quad\text{whenever $|\zeta|\le r$,}
\end{equation}
where $C$ is the universal constant from Lemma~\ref{LM_pseudo}.
\end{corollary}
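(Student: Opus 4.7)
The plan is to reduce Corollary~\ref{C_pseudo} to Lemma~\ref{LM_pseudo} by normalizing $\varphi$ at the origin via a M\"obius automorphism of~$\UD$. Set $a := \varphi(0)$ and consider the automorphism $T_a(w) := (w-a)/(1-\bar{a}w)$ of~$\UD$, which sends $a$ to~$0$. Then $\psi := T_a\circ\varphi$ belongs to $\Hol(\UD,\UD)$ and satisfies $\psi(0) = 0$, so Lemma~\ref{LM_pseudo} is applicable to~$\psi$ (when $a=0$ the argument is trivial since $\psi=\varphi$).

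Exploiting the M\"obius invariance of $\rho_\UD$, one has
\[
\rho_\UD\bigl(\varphi(\zeta),\zeta\bigr) = \rho_\UD\bigl(\psi(\zeta),T_a(\zeta)\bigr)
\le \rho_\UD\bigl(\psi(\zeta),\zeta\bigr) + \rho_\UD\bigl(\zeta,T_a(\zeta)\bigr),
\]
where the triangle inequality was used in the second step. For the first summand I apply Lemma~\ref{LM_pseudo} to~$\psi$ evaluated at~$\zeta_0$. For the second, a direct computation yields $T_a(\zeta)-\zeta = (\bar{a}\zeta^2-a)/(1-\bar{a}\zeta)$ and $1-\bar\zeta T_a(\zeta) = \bigl((1-|\zeta|^2) + 2i\Im(a\bar\zeta)\bigr)/(1-\bar{a}\zeta)$, whence $\rho_\UD(\zeta,T_a(\zeta)) \le 2|a|/(1-|\zeta|^2) \le 2|a|/(1-r^2)$. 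A similar algebraic manipulation of $T_a(\varphi(\zeta_0)) - \zeta_0$ produces
\[
\bigl|\psi(\zeta_0)-\zeta_0\bigr| \le \frac{|\varphi(\zeta_0)-\zeta_0| + 2|a|}{1-|a|}\,,
\]
since $|\,{-a}+\bar a\zeta_0\varphi(\zeta_0)|\le 2|a|$ and $|1-\bar a\varphi(\zeta_0)|\ge 1-|a|$.

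The main obstacle is the factor $1/(1-|a|)$, which is singular as $|a|\to 1$ but has no counterpart in the target bound. I will handle this by a case split on~$|a|$. If $|a|\ge 1/2$, then $|\zeta_0|(1-|\zeta_0|)^2(1-r)^2\le 1$ makes the right-hand side of the claimed inequality at least $2C$; hence, after possibly enlarging the universal constant~$C$ so that $C\ge 1/2$ (this is permissible since Lemma~\ref{LM_pseudo} remains valid with any larger constant), the inequality is trivial because $\rho_\UD\le 1$ by definition. If $|a| < 1/2$ instead, then $1/(1-|a|) \le 2$ is harmless and the estimates for the two summands propagate cleanly.

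To finish the second case, I combine the bounds and apply two elementary inequalities: $(1-|\zeta_0|^2) \ge (1-|\zeta_0|)^2$, which weakens the denominator produced by Lemma~\ref{LM_pseudo} to match the target, and $2|a|/(1-r^2)\le 2|a|/\bigl[|\zeta_0|(1-|\zeta_0|)^2(1-r)^2\bigr]$, which holds since $|\zeta_0|(1-|\zeta_0|)^2(1-r)\le 1$ and lets us absorb the contribution $\rho_\UD(\zeta,T_a(\zeta))$ into the main term. Gathering everything gives a bound of the claimed form~\eqref{EQ_pseudo}, possibly with a slightly enlarged universal multiplicative constant in place of~$C$.
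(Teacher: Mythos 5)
Your argument is correct and follows the same overall scheme as the paper: compose $\varphi$ with the M\"obius automorphism sending $\varphi(0)$ to $0$, apply Lemma~\ref{LM_pseudo} to $\psi$, and split $\rho_\UD\big(\varphi(\zeta),\zeta\big)$ by M\"obius invariance and the triangle inequality into the Lemma term plus the displacement of the automorphism. The one place you diverge is the estimate of $|\psi(\zeta_0)-\zeta_0|$: your direct algebraic bound produces the factor $1/(1-|\varphi(0)|)$, which you then neutralize by the case split $|\varphi(0)|\gtrless 1/2$ at the price of an enlarged constant. The paper avoids this singularity altogether by writing $|\psi(\zeta_0)-\zeta_0|\le|\varphi(\zeta_0)-\zeta_0|+\big|\psi(\zeta_0)-\ell^{-1}\big(\psi(\zeta_0)\big)\big|$ with $\ell^{-1}(\psi(\zeta_0))=\varphi(\zeta_0)$, and using the Schwarz lemma ($\psi(0)=0$, so $|\psi(\zeta_0)|\le|\zeta_0|$) to bound the second term by $2|\varphi(0)|/(1-|\zeta_0|)$; the resulting denominator $1-|\zeta_0|$ is exactly what the target's $(1-|\zeta_0|)^2$ absorbs, so no case split is needed and the constant $C$ survives essentially intact (the paper too implicitly assumes $C\ge1$ when collecting the $|\varphi(0)|$ terms into $4|\varphi(0)|$). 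Your enlargement of the universal constant is harmless for the same reason you state — Lemma~\ref{LM_pseudo} holds with any larger $C$, so one may rename — but the Schwarz-lemma trick is the cleaner route and yields the statement literally as written.
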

\begin{proof}
Apply Lemma~\ref{LM_pseudo} for the function $\psi:=\ell\circ\varphi$, where
$$\ell(z):=\frac{z-w_0}{1-\overline{w}_0z},\quad w_0:=\varphi(0).$$
Let us estimate first $|\ell(z)-z|$. For any $z\in \UD$ we have
\begin{equation}\label{EQ_ell(z)-z}
\big|\ell(z)-z\big|=\left|\frac{\overline w_0 z^2 - w_0}{1-\overline w_0
z}\right|\le\frac{2|w_0|}{1-|z|}=\frac{2|\varphi(0)|}{1-|z|}.
\end{equation}
Similarly, $|w-\ell^{-1}(w)|\le 2|\varphi(0)|/(1-|w|)$ for all $w\in\UD$.
Setting $w:=\psi(\zeta_0)$ and bearing in mind that in this case
$|w|\le|\zeta_0|$ by the Schwarz Lemma, we therefore conclude that
\begin{equation}\label{EQ_psi(z_0)-z_0}
\big|\psi(\zeta_0)-\zeta_0\big|\le
\big|\varphi(\zeta_0)-\zeta_0\big|+\big|\psi(\zeta_0)-\ell^{-1}\big(\psi(\zeta_0)\big)\big|\le
\big|\varphi(\zeta_0)-\zeta_0\big|+\frac{2|\varphi(0)|}{1-|\zeta_0|}.
\end{equation}

By the invariance of the pseudohyperbolic distance under the M\"obius
transformations of~$\UD$ and by the triangle inequality, for each $\zeta\in\UD$
we have
$$\rho_\UD\big(\varphi(\zeta),\zeta\big)=\rho_\UD\big(\psi(\zeta),\ell(\zeta)\big)
=\rho_\UD(\ell(\zeta),\zeta)+\rho_\UD\big(\psi(\zeta),\zeta\big).$$
Inequality~\eqref{EQ_ell(z)-z} implies that
$\rho_\UD\big(\ell(\zeta),\zeta\big)\le\big|\ell(\zeta)-\zeta\big|/\big(1-|\zeta|\big)\le
2|\varphi(0)|/\big(1-|\zeta|\big)^2$, while the estimate for
$\rho_\UD\big(\psi(\zeta),\zeta\big)$ is obtained from~\eqref{EQ_psi(z_0)-z_0}
and~\eqref{EQ_psi}.

Now~\eqref{EQ_pseudo} follows easily.
\end{proof}
\begin{corollary}\label{C_pseudo_symm} For every $r,R,\rho\in(0,1)$ there exists a constant
$\tilde C=\tilde C(r,R,\rho)$ such that for any $\varphi\in\Hol(\UD,\UD)$,
\begin{equation}\label{EQ_pseudo_symm}
\rho_{\UD}\big(\varphi(\zeta),\zeta\big)\le \tilde
C(r,R,\rho)\big(|\varphi(\zeta_1)-\zeta_1|+|\varphi(\zeta_2)-\zeta_2|\big)
\end{equation}
whenever $\zeta,\zeta_1,\zeta_2\in\UD$ satisfy the conditions $|\zeta|\le r$,
$|\zeta_j|\le R$, $j=1,2$, and $\rho_\UD(\zeta_2,\zeta_1)\ge \rho$.
\end{corollary}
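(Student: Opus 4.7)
The plan is to reduce the claim to Corollary~\ref{C_pseudo} by conjugating $\varphi$ with a M\"obius automorphism of~$\UD$ that maps $\zeta_1$ to the origin. Set
\[
T(z):=\frac{z-\zeta_1}{1-\overline{\zeta_1}\,z},\qquad \psi:=T\circ\varphi\circ T^{-1}\in\Hol(\UD,\UD),
\]
so that $\psi(0)=T(\varphi(\zeta_1))$. I will apply Corollary~\ref{C_pseudo} to $\psi$ at the point $\zeta':=T(\zeta)$ with $\zeta_0':=T(\zeta_2)$, and then translate back using the M\"obius invariance of~$\rho_\UD$ together with elementary bounds on $T$ and $T'$. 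The task then reduces to checking that each quantity appearing on the right-hand side of~\eqref{EQ_pseudo} is controlled in terms of $r$, $R$, and~$\rho$ alone.

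Concretely, the key observations are as follows. First, $|\psi(0)|=\rho_\UD(\varphi(\zeta_1),\zeta_1)\le|\varphi(\zeta_1)-\zeta_1|/(1-R)$, by the elementary bound $|1-\overline{u}v|\ge 1-|u||v|\ge 1-R$ valid for $u\in\UD$ and $|v|\le R$. Second, by M\"obius invariance of $\rho_\UD$ and the hypothesis $\rho_\UD(\zeta_1,\zeta_2)\ge\rho$, one has $|\zeta_0'|\ge\rho$; on the other hand, the triangle inequality for the hyperbolic distance gives $|\zeta_0'|\le R^*:=2R/(1+R^2)<1$ and, analogously, $|\zeta'|\le r^*:=(r+R)/(1+rR)<1$. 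Third, from $|T'(z)|=(1-|\zeta_1|^2)/|1-\overline{\zeta_1}z|^2\le 1/(1-R)^2$ on~$\UD$ one obtains the Lipschitz estimate $|\psi(\zeta_0')-\zeta_0'|=|T(\varphi(\zeta_2))-T(\zeta_2)|\le|\varphi(\zeta_2)-\zeta_2|/(1-R)^2$.

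Substituting these bounds into the conclusion of Corollary~\ref{C_pseudo} applied to $\psi$ (with $\zeta'$ in place of $\zeta$, $\zeta_0'$ in place of $\zeta_0$, and $r^*$ in place of $r$), and using $\rho_\UD(\psi(\zeta'),\zeta')=\rho_\UD(\varphi(\zeta),\zeta)$, I obtain inequality~\eqref{EQ_pseudo_symm} with an explicit constant of the form
\[
\tilde C(r,R,\rho)=\frac{C}{\rho\,(1-R^*)^2\,(1-r^*)^2}\max\!\left\{\frac{1}{(1-R)^2},\,\frac{4}{1-R}\right\}.
\]
The degenerate case $\zeta=\zeta_1$ (in which $\zeta'=0$, so the hypotheses of Corollary~\ref{C_pseudo} are not formally met) has to be handled separately but is immediate from the bound on $|\psi(0)|$, since $\rho_\UD(\varphi(\zeta_1),\zeta_1)=|\psi(0)|$. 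The only genuine obstacle is bookkeeping: making sure that every intermediate quantity really depends only on $r$, $R$, $\rho$ and not on the particular choices of $\varphi$, $\zeta$, $\zeta_1$, $\zeta_2$ satisfying the hypotheses.
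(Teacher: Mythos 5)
Your proposal is correct and follows essentially the same route as the paper: conjugating $\varphi$ by the disc automorphism sending $\zeta_1$ to $0$ (your $T$ is the paper's $\ell^{-1}$), applying Corollary~\ref{C_pseudo} at the image of $\zeta_2$, and translating back via the M\"obius invariance of $\rho_\UD$ and elementary bounds on the automorphism and its derivative. You merely carry out explicitly the bookkeeping that the paper omits, including the harmless degenerate case $\zeta=\zeta_1$.
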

\begin{proof}
The statement of the corollary can be obtained in the following way. Applying
Corollary~\ref{C_pseudo} for $\ell^{-1}\circ\varphi\circ\ell$ and
$\zeta_0:=\ell^{-1}(\zeta_2)$, where $\ell$ is a M\"obius automorphism
of~$\UD$ sending $0$ to~$\zeta_1$, one obtains an estimate for
$\rho_\UD\big(\varphi(\ell(\zeta)),\ell(\zeta)\big)=
\rho_\UD\big((\ell^{-1}\circ\varphi\circ\ell)(\zeta),\zeta\big)$.

Now substitute $\ell^{-1}(\zeta)$ for $\zeta$ in order to
deduce~\eqref{EQ_pseudo_symm}. To carry out these estimates, we also use the
fact that $|\ell^{-1}(z)|\le(|z|+|\zeta_1|)/(1+|z\zeta_1|)$ and
$|(\ell^{-1})'(z)|\le 2/(1-|\zeta_1|)$ for all $z\in\UD$. Since the concrete
expression for the constant~$\tilde C(r,R,\rho)$ is not important for our
purposes, we omit the details.
\end{proof}

The following lemma seems to contain well known facts. We include it only for
the sake of completeness. In what follows we will write $B\csub A$ to indicate
that the closure of $B$ is a compact subset of $A$.
\begin{lemma}\label{LM_seq-conf}
Let $(f_n)$ be a sequence of univalent holomorphic
functions~$f_n:\UD\to\Complex$ converging locally uniformly in~$\UD$ to a
non-constant function~$f$. Then:
\begin{itemize}
\item[(i)] the function $f$ is univalent in~$\UD$;
\item[(ii)] for each set $U\csub D:=f(\UD)$ there exists
$n_0=n_0(U)$ such that for all $n>n_0(U)$, $n\in\Natural$, we have $U\subset
D_n:=f_n(\UD)$;
\item[(iii)] for each $U\csub D:=f(\UD)$ the sequence
$g_k:=f_{n_0(U)+k}^{-1}$ converges uniformly on~$U$ to $g:=f^{-1}$.
\end{itemize}
\end{lemma}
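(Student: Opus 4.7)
The three parts can be handled by classical complex-analytic tools, applied in turn.

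For part (i), I would invoke Hurwitz's theorem. Suppose, for a contradiction, that $f(a)=f(b)$ for some distinct $a,b\in\UD$. Since $f$ is non-constant, the zeros of $f-f(a)$ are isolated, so one can pick small disjoint closed disks $B_a,B_b\csub\UD$ around $a$ and $b$ on whose boundaries $|f-f(a)|\ge\varepsilon>0$. For all sufficiently large $n$, $|f_n-f|<\varepsilon$ on $\partial B_a\cup\partial B_b$, so Rouch\'e's theorem forces $f_n-f(a)$ to have at least one zero in each of $B_a,B_b$, contradicting the univalence of $f_n$.

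For part (ii), I would use the argument principle. The set $K:=f^{-1}(\overline{U})$ is compact in $\UD$ since $f$ is a homeomorphism of $\UD$ onto $D$. Pick an open $V\subset\UD$ with $K\subset V$ and $V\csub\UD$. By the injectivity of $f$, the compact set $f(\partial V)$ is disjoint from $\overline{U}$, so
\[
\delta\;:=\;\dist\bigl(f(\partial V),\overline{U}\bigr)\;>\;0.
\]
Choose $n_0(U)$ so large that $\max_{z\in\overline{V}}|f_n(z)-f(z)|<\delta/2$ for all $n>n_0(U)$. Then for each $w\in\overline{U}$ and each $z\in\partial V$,
\[
|f_n(z)-w|\;\ge\;|f(z)-w|-|f_n(z)-f(z)|\;>\;\delta/2\;>\;0,
\]
so Rouch\'e's theorem shows that $f_n-w$ and $f-w$ have the same number of zeros in $V$. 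Since $f-w$ has exactly one zero (at $f^{-1}(w)\in K\subset V$), so does $f_n-w$; hence $w\in f_n(V)\subset D_n$, giving $\overline{U}\subset D_n$.

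For part (iii), I would use Montel's theorem. Fix an open set $U'$ with $\overline{U}\subset U'$ and $\overline{U'}\csub D$, and apply part (ii) to $U'$ to obtain an index $n_0(U')$ and an open set $V'\csub\UD$ such that $g_n(U')\subset V'$ for every $n>n_0(U')$. The restricted family $(g_n|_{U'})_{n>n_0(U')}$ is then holomorphic on $U'$ with values in the compact set $\overline{V'}\subset\UD$, so it is normal by Montel's theorem. Let $h$ be any locally uniform subsequential limit on $U'$ of some sub-sequence $(g_{n_k})$; then $h$ is holomorphic with $h(U')\subset\overline{V'}\subset\UD$. The identity $f_{n_k}\circ g_{n_k}=\id_{U'}$, together with the uniform convergence of $f_n$ to $f$ on $\overline{V'}$, passes to the limit to give $f\circ h=\id_{U'}$, hence $h=f^{-1}=g$ on $U'$. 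Since every subsequence of $(g_n)$ admits a further subsequence converging to the \emph{same} limit $g$, the full sequence converges locally uniformly on $U'$ to $g$, and in particular uniformly on $U$.

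The main subtlety, and the one point where Montel alone is not quite enough, is ensuring that the subsequential limit $h$ takes values strictly inside $\UD$ rather than possibly touching $\partial\UD$, so that the relation $f_n\circ g_n=\id$ can be legitimately taken to the limit. The device that circumvents this is exactly working on the slightly enlarged open neighborhood $U'$ of $\overline{U}$ given by part (ii): this forces the images $g_n(U')$ to lie inside a \emph{fixed} compact subset $\overline{V'}$ of $\UD$, and hence so does any subsequential limit.
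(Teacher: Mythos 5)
Your proposal is correct and follows essentially the same route as the paper: Hurwitz/Rouch\'e for (i) and (ii), and for (iii) normality plus the observation that the $g_n$ map a neighbourhood of $\overline{U}$ into a fixed compact subset of $\UD$, so that $f_n\circ g_n=\id$ passes to the limit (the paper uses a disk $r\UD$ with $U\csub f(r\UD)$ where you use $V'$). One small wording point: the containment $g_n(U')\subset V'$ is not literally given by the statement of (ii), but it does follow from your Rouch\'e proof of (ii), where the unique zero of $f_n-w$ is located inside $V$, so no gap results.
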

\begin{proof}
Statement (i) is (a variant of) the Hurwitz Theorem, see, e.\,g.,
\cite[p.\,5]{Duren}. Statement (ii) can be easily deduced from Rouche's
Theorem. We give here only the proof of~(iii).

First of all we can assume that $U$ is a domain. Furthermore, it is sufficient
to prove only locally uniform convergence of $g_k$ to $g$ on $U$.

The functions $g_k$ satisfy $|g_k(z)|<1$ for any $z\in U$. Hence $(g_k)$ is a
normal family in $U$ and we only need to show that the limit of any convergent
subsequence of~$(g_k)$ coincides on $U$ with~$g$. So without loss of generality
we can assume that the sequence $(g_k)$ itself converges locally uniformly
on~$U$ to some holomorphic function $\tilde g:U\to\overline\UD$.

We claim that actually
\begin{equation}\label{EQ_inD}
\tilde g(U)\subset\UD.
\end{equation}
To show that this is the case we fix any $r\in(0,1)$ such that $U\subset\subset
f(r\UD)$. Then by~(ii), $U\subset f_n(r\UD)$ for all $n\in\Natural$ large
enough. Hence $\tilde g(U)\subset\overline{r\UD}\subset \UD$.

According to~\eqref{EQ_inD} we can pass to limits as $k\to+\infty$ in the
equality ${f_{n_0(U)+k}\circ (g_k|_U)=\id_U}$, $k\in\Natural$, to see that
$f\circ\tilde g=\id_U$, i.e., $\tilde g$ coincides on~$U$ with $g$.
\end{proof}

\begin{lemma}\label{LM_phi-nepr}
Let $(f_t)_{t\in[0,T]}$ be a family of holomorphic univalent functions
${f_t:\UD\to\Complex}$, satisfying the following conditions:
\begin{itemize}
\item[(i)] $\{f_t:t\in[0,T]\}$ is a normal family in~$\UD$;
\item[(ii)] there exist two points $\zeta_1,\zeta_2\in\UD$, $\zeta_1\neq \zeta_2$,
such that the functions $[0,T]\ni t\mapsto f_t(\zeta_j)\in\Complex$ are
continuous for $j=1,2$.
\end{itemize}
Then for any compact set $K\subset\UD$ there exists a constant $M_K>0$ such
that
\begin{equation}\label{EQ_phi-nepr}
 |z_1-z_2|\le M_K|f_t(z_1)-f_t(z_2)|
\end{equation}
for any $t\in[0,T]$, any $z_1\in K$ and all $z_2\in\UD$.
\end{lemma}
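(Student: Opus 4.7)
The plan is a proof by contradiction using a normal-family compactness argument. Assume the conclusion fails for some compact $K\subset\UD$; then I would extract sequences $(t_n)\subset[0,T]$, $(z_1^{(n)})\subset K$ and $(z_2^{(n)})\subset\UD$ with $z_1^{(n)}\ne z_2^{(n)}$ and
\[
\frac{|f_{t_n}(z_1^{(n)})-f_{t_n}(z_2^{(n)})|}{|z_1^{(n)}-z_2^{(n)}|}\longrightarrow 0.
\]
Using normality~(i) and compactness of $[0,T]$, $K$ and $\overline{\UD}$, I would pass to a subsequence so that $t_n\to t_*\in[0,T]$, $z_1^{(n)}\to z_1^*\in K$, $z_2^{(n)}\to z_2^*\in\overline{\UD}$, and $f_{t_n}\to f$ locally uniformly in $\UD$ for some holomorphic $f:\UD\to\Complex$. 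Hypothesis~(ii) forces $f(\zeta_j)=f_{t_*}(\zeta_j)$ for $j=1,2$; since $f_{t_*}$ is univalent and $\zeta_1\ne\zeta_2$, $f(\zeta_1)\ne f(\zeta_2)$, so $f$ is non-constant, hence univalent in $\UD$ by Lemma~\ref{LM_seq-conf}(i).

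Next I split into two cases depending on the position of $z_2^*$. If $z_2^*\in\UD$, local uniform convergence of $f_{t_n}$ to $f$ gives $f(z_1^*)=f(z_2^*)$, and univalence of $f$ forces $z_1^*=z_2^*=:z^*\in K$. Writing
\[
\frac{f_{t_n}(z_1^{(n)})-f_{t_n}(z_2^{(n)})}{z_1^{(n)}-z_2^{(n)}}=\int_{0}^{1} f_{t_n}'\!\left(z_2^{(n)}+s(z_1^{(n)}-z_2^{(n)})\right)ds,
\]
locally uniform convergence $f_{t_n}'\to f'$ (automatic from the convergence of $f_{t_n}$ via Cauchy's formula) shows the right-hand side tends to $f'(z^*)$; but $f'(z^*)\ne 0$ by univalence of $f$, contradicting the assumption that the left-hand side vanishes in the limit. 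If instead $z_2^*\in\partial\UD$, then $|z_1^{(n)}-z_2^{(n)}|\ge\delta>0$ for all large $n$, so $f_{t_n}(z_2^{(n)})\to f(z_1^*)\in f(\UD)$. Choose a small closed disk $\overline{D}\subset f(\UD)$ around $f(z_1^*)$; by Lemma~\ref{LM_seq-conf}(ii), $\overline{D}\subset f_{t_n}(\UD)$ for large $n$, and Lemma~\ref{LM_seq-conf}(iii) gives $f_{t_n}^{-1}\to f^{-1}$ uniformly on $\overline{D}$. Hence $z_2^{(n)}=f_{t_n}^{-1}(f_{t_n}(z_2^{(n)}))\to f^{-1}(f(z_1^*))=z_1^*\in\UD$, contradicting $z_2^*\in\partial\UD$.

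The delicate point is the ``collision'' subcase $z_1^*=z_2^*$ in the interior case, where univalence of $f$ cannot be invoked directly from the equality $f(z_1^*)=f(z_2^*)$. The integral representation above lets us replace the divided difference by $f'(z^*)$, whose non-vanishing (a consequence of univalence of $f$) produces the contradiction. The boundary case is handled by the inverse-convergence statement Lemma~\ref{LM_seq-conf}(iii), which pushes the putative limit of $z_2^{(n)}$ back into $\UD$ and so rules out any escape to the boundary.
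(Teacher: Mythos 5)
Your proof is correct and follows essentially the paper's strategy: argue by contradiction, extract by normality a subsequence $f_{t_n}\to f$ locally uniformly, use hypothesis (ii) together with univalence of the $f_t$ to see that $f$ is non-constant (hence univalent by Lemma~\ref{LM_seq-conf}(i)), and then invoke Lemma~\ref{LM_seq-conf}(ii)--(iii). The only difference is the endgame: you split into an interior ``collision'' case (handled via the forward-derivative integral, so that the divided difference tends to $f'(z^*)\neq0$) and a boundary-escape case (handled via pointwise convergence of the inverses), whereas the paper avoids any case distinction by observing that both image points $f_{t_n}(z_n^{(1)})$ and $f_{t_n}(z_n^{(2)})$ eventually lie in a fixed closed disk $U'\subset f(\UD)$ centered at $f(z_0)$, on which $(f_{t_n}^{-1})'$ converges uniformly and is therefore uniformly bounded, which yields at once $|z_n^{(1)}-z_n^{(2)}|\le M\,|f_{t_n}(z_n^{(1)})-f_{t_n}(z_n^{(2)})|$ and the desired contradiction.
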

\begin{proof}
Assume the conclusion is false. Then there exist sequences $(z_n^{(1)})$, $(z_n^{(2)})$,
$(t_n)$ and a compact set $K\subset\UD$ such that for all $n\in\Natural$ we
have:
\begin{itemize}
\item[(a)] $z_n^{(1)}\in K$, $z_n^{(2)}\in \UD$ and $t_n\in[0,T]$;

\item[(b)]
$|z_n^{(1)}-z_n^{(2)}|>n|f_{t_n}(z_n^{(1)})-f_{t_n}(z_n^{(2)})|$.
\end{itemize}

Since $|z_n^{(1)}-z_n^{(2)}|<2$, from (b) it follows that
\begin{itemize}
\item[(c)] $|f_{t_n}(z_n^{(1)})-f_{t_n}(z_n^{(2)})|\to 0$ as $n\to+\infty$.
\end{itemize}

Recall that by (i), $(f_t)$ constitutes a normal family in~$\UD$. Hence using
(a) and passing if necessary to subsequences, we can assume that
\begin{itemize}
\item[(d)] $z^{(1)}_n\to z_0$ and $f_{t_n}\to f$ locally uniformly in~$\UD$ as $n\to+\infty$
\end{itemize}
for some $z_0\in K$ and some $f\in\Hol(\UD,\ComplexE)$.

By condition~(ii) the functions $[0,T]\ni t\mapsto f_t(z_j)$, $j=1,2$, are
continuous. Moreover, $f_t(z_1)\neq f_t(z_2)$ for all $t\in[0,T]$. Therefore,
$|f_{t_n}(z_1)-f_{t_n}(z_2)|>m$ and $|f_{t_n}(z_1)|<M$ for all $n\in\Natural$
and some constants $m>0$ and $M>0$ not depending on~$n$. Hence $f$ is a
holomorphic function in~$\UD$ different from a constant.
By~Lemma~\ref{LM_seq-conf}~(i), $f$ is univalent~in~$\UD$.

Now fix any closed disk~$U$ centered at~$w_0:=f(z_0)$ and lying in~$f(\UD)$ and
let $U'$ be a closed disk of smaller radius centered also at~$w_0$. Then
according to (c), (d) and Lemma~\ref{LM_seq-conf}~(ii) there exists $n_0$ such
that for all $n>n_0$, $n\in\Natural$ we have
\begin{itemize}
\item[(e)] $f_{t_n}(z^{(j)}_n)\in U'$, $j=1,2$, and $U\subset
f_{t_n}(\UD)$.
\end{itemize}
Hence from Lemma~\ref{LM_seq-conf}~(iii) it follows that $(f_{t_n}^{-1})'\to
(f^{-1})'$ uniformly on~$U'$ as ${n\to+\infty}$, $n>n_0$. Therefore there
exists a constant~$M>0$ such that $|(f_{t_n}^{-1})'(w)|<M$ for all $n>n_0$,
$n\in\Natural$ and all $w\in U'$. Finally, taking into account (e), we get $$
|z_n^{(1)}-z_n^{(2)}|<M|f_{t_n}(z_n^{(1)})-f_{t_n}(z_n^{(2)})| $$ for all
$n>n_0$, $n\in\Natural$. This contradicts assumption~(b) and hence completes
the proof of~\eqref{EQ_phi-nepr}.
\end{proof}

\section{Decreasing Loewner chains and Herglotz vector Fields}
\label{S_decChains} As we mentioned in the Introduction, while the classical
Loewner Theory deals with increasing Loewner chains over $[0,+\infty)$, the SLE
theory, having recently caused a burst of interest to Loewner Theory, is based
on the ``decreasing" counterpart of the classical constructions
of~\cite{Loewner, Kufarev, Pommerenke-65, Pommerenke, Kufarev_etal, Aleks1983,
AleksST, Goryainov-Ba}. The variant of the Loewner ODE underlying the chordal
SLE is the following equation
\begin{equation}\label{EQ_classChDec}
\dot w=\frac{2}{w-\lambda(t)},\quad t\ge0,~ w(0)=\zeta,
\end{equation}
where the initial condition $\zeta$ is chosen in the upper half-plane
$\UH:=\{\zeta:\Im \zeta>0\}$ and $\lambda:[0,+\infty)\to\Real$ is a continuous
control function. The stochastic ODE describing SLE is obtained by substituting
the Brownian motion $(B_t)$ times a positive factor for~$\lambda(t)$.

\begin{remark}
A more general (deterministic) form of~\eqref{EQ_classChDec} was studied by
Bauer~\cite{Bauer}.
\end{remark}

By means of the Cayley map $H(z):=i(1+z)/(1-z)$ from $\UD$ onto $\UH$
equation~\eqref{EQ_classChDec} can be rewritten as $\dot w=-G_{\lambda(t)}(w)$,
where
$G_{\lambda}(z):=(1/2)(1-z^3)\big(1-H^{-1}(\lambda)\big)/\big(z-H^{-1}(\lambda)\big)$
for all $z\in\UD$ and all $\lambda\in\Real$. With $\lambda(t)$ being
continuous, it is easy to see that $\UD\times[0,+\infty)\ni(z,t)\mapsto
G_{\lambda(t)}(z)$ is a Herglotz vector field of order~$d=+\infty$ (see
Definition~\ref{D_BCM-VF}). More generally,  although the definitions given
in~\cite{Aleks1983, AleksST, AleksSTSob, Goryainov-Ba, Bauer} differ, the
authors of these works considered essentially the same class $\mathfrak R$ of
non-autonomous vector fields in $\UH$ consisting of functions
$p:\UH\times[0,+\infty)\to\UH$ measurable in $t$ for each $z\in\UH$ and
representable, for a.e. $t\ge0$ fixed, in the following form
$$
p(\zeta,t)=\int_\Real\frac{d\mu_t(x)}{x-\zeta},
$$
where $\mu_t$, for each $t\ge0$, is a probability measure on~$\Real$. Via the
Cayley map, this class corresponds to the vector fields in $\UD$ given by
$G_p(z,t)=p(H(z),t)/H'(z)$. Using the estimate $|p(\zeta,t)|\le 1/\Im \zeta$
for all $\zeta\in\UH$, a.e. $t\ge0$ and all $p\in\mathfrak R$ (see, e.g.,
\cite[Lemma~1]{AleksSTSob}) it is easy to see that the vector fields $G_p$
represent again a particular case of Herglotz vector fields of
order~$d=+\infty$.

Thus Theorem~\ref{TH_G-f_t}, which we are going to prove in this section, can
regarded as an extension of Theorem~\ref{TH_LawlerBook} as well as of its more
abstract form in~\cite[\S4.1]{LawlerBook}.

We also will establish a kind of inverse statement for Theorem~\ref{TH_G-f_t}.
\begin{theorem}\label{TH_f_t-G}
Suppose $(f_t)$ is a decreasing Loewner chain of order~$d\in[1,+\infty]$.
Denote $\Omega_t:=f_t(\UD)$, $g_t:=f^{-1}_t:\Omega_t\to\UD$ for all $t\ge0$,
and $t(z):=\sup\{t\ge0:z\in\Omega_t\}$ for all $z\in\UD$. Then there exists a
Herglotz vector field~$G$ of order~$d$ and a null-set $N\subset[0,+\infty)$
such that the following three statements hold.
\begin{mylist}
\item[(i)] For every $t\in[0,+\infty)\setminus N$ the function
$$
z\mapsto \frac{\partial g_t(z)}{\partial
t}:=\lim_{h\to0}\frac{g_{t+h}(z)-g_t(z)}{h}
$$
is well defined and holomorphic in~$\Omega_t$. Moreover, for every $z\in\UD$
the function $[0,t(z))\ni t\mapsto w_z(t):=g_t(z)$ is the maximal solution to
the following initial value problem for the generalized Loewner\,--\,Kufarev
ODE
\begin{equation}\label{EQ_dec-ini-ODE}
\frac{dw}{dt}=-G(w,t),~~~t\ge0,\quad w(0)=z.
\end{equation}
Substituting $w_z(t)$ for $w(t)$ turns this equation into equality that holds
for all ${t\in[0,t(z))\setminus N}$.

\item[(ii)] The function $F(z,t):=f_t(z)$, $t\ge0$, $z\in\UD$, is a solution\footnote{For
the definition of solutions to this equation, see Subsect.\,\ref{SS_solution}.}
to the following generalized Loewner\,--\,Kufarev PDE
\begin{equation}\label{EQ_dec-PDE}
\frac{\partial F(z,t)}{\partial t}=\frac{\partial F(z,t)}{\partial z} G(z,t)
\end{equation}
with the initial condition $F(\cdot,0)=\id_\UD$. Substituting $f_t(z)$ for
$F(z,t)$ turns this equation into equality that holds for all
${t\in[0,+\infty)\setminus N}$ and all $z\in\UD$.

\item[(iii)] Given any holomorphic function $F_0:\UD\to\Complex$, the initial value problem
$F(\cdot,0)=F_0$ for PDE~\eqref{EQ_dec-PDE} has a unique solution
${(z,t)\mapsto F(z,t)\in\Complex}$, which is defined for all
$(z,t)\in\UD\times[0,+\infty)$ and is given by the formula $F(\cdot,t)=F_0\circ
f_t$, $t\in[0,+\infty)$.
\end{mylist}

The Herglotz vector field $G$ for which at least one of  these statements
holds\footnote{The null-set $N$ depends, of course, on the choice of $G$:
changing $G$ on a null-set will change $N$.} is essentially unique, i.e. any
two such Herglotz vector fields should agree for all $t\in[0,+\infty)\setminus
M$ and all $z\in\UD$, where $M\subset[0,+\infty)$ is a null-set.
\end{theorem}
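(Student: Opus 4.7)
The plan is to reduce Theorem \ref{TH_f_t-G} to the already-established increasing Loewner theory via time reversal on bounded intervals. For each fixed $T > 0$, set $F^T_t := f_{T-t}$ for $t \in [0,T]$. Since $\Omega_t = f_t(\UD)$ shrinks as $t$ grows (DC2), the family $(F^T_t)$ has growing images on $[0,T]$; conditions DC1 and DC3 carry over under this reversal (with majorant $\xi \mapsto k_{K,T}(T-\xi)$, still in $L^d$), so $(F^T_t)$ is an increasing $L^d$-Loewner chain in the sense of Definition \ref{D_SMP-LC}. By Theorem \ref{TH_SMP-EF-LC}, the associated evolution family is $\Phi^T_{s,t} := (F^T_t)^{-1} \circ F^T_s = f_{T-t}^{-1} \circ f_{T-s}$, and Theorem \ref{TH_BCM-EF-VF} produces an essentially unique Herglotz vector field $G^T$ of order $d$ on $[0,T]$ driving it. Defining $G(z, t) := G^T(z, T-t)$, the essential uniqueness part of Theorem \ref{TH_BCM-EF-VF} guarantees consistency across different choices of $T$; letting $T \to \infty$ thus yields a global Herglotz vector field $G$ of order $d$, together with a null-set $N \subset [0, +\infty)$ outside which the a.e. relations below hold.

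For statement (ii), implicitly differentiate $F^T_t \circ \Phi^T_{s,t} = F^T_s$ in $t$ and combine with the ODE $\partial_t \Phi^T_{s,t}(z) = G^T(\Phi^T_{s,t}(z), t)$ from Theorem \ref{TH_BCM-EF-VF}; specializing at $t = s$ and substituting $F^T_t = f_{T-t}$ yields $\partial_\sigma f_\sigma(z) = f_\sigma'(z) G(z, \sigma)$ at $\sigma = T - s$ for a.e. $s$, which is the PDE \eqref{EQ_dec-PDE}. Conditions S1--S3 of Definition \ref{D_solution} for $F(z,t) := f_t(z)$ are immediate from DC1 and DC3, and Proposition \ref{PR_characteristics}(ii) upgrades the $z$-dependent null-sets to a single uniform $N$. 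Statement (i) then follows from (ii) by differentiating $f_t(g_t(z)) = z$ for fixed $z$ and invoking univalence $f_t'(g_t(z)) \neq 0$ to isolate $\dot w_z(t) = -G(w_z(t), t)$; maximality of the solution is automatic from the definition $t(z) := \sup\{t : z \in \Omega_t\}$, and holomorphy of $\partial_t g_t$ on $\Omega_t$ follows from the identity $\partial_t g_t(z) = -G(g_t(z), t)$. For statement (iii), $F(z,t) := F_0(f_t(z))$ satisfies the PDE by the chain rule combined with (ii); uniqueness of the solution follows from Proposition \ref{PR_characteristics}(iii), which forces any alternative $\tilde F$ to satisfy $\tilde F(g_t(z), t) = F_0(z)$ for all $z \in \UD$, and since $g_t$ maps $\Omega_t$ onto $\UD$ this identifies $\tilde F \equiv F$.

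Essential uniqueness of $G$ itself comes from Proposition \ref{PR_characteristics}(ii): any two candidate Herglotz vector fields $G$ and $\tilde G$ must satisfy $f_t'(z) G(z,t) = \partial_t f_t(z) = f_t'(z) \tilde G(z,t)$ outside a common null-set of times and for all $z \in \UD$, so nonvanishing of $f_t'$ by univalence gives $G(\cdot, t) = \tilde G(\cdot, t)$ there.

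The main obstacle, in my view, is ensuring that the Herglotz vector fields $G^T$ constructed for different time-horizons $T$ fit together consistently into a single $G$ defined on all of $[0, +\infty)$; this hinges on the essential uniqueness clause of Theorem \ref{TH_BCM-EF-VF} and on careful bookkeeping of null-sets across the reparametrizations $t \mapsto T - t$. A secondary subtlety is passing from the pointwise, $z$-dependent null-sets produced directly by the PDE to the uniform null-set $N$ appearing in statements (i) and (ii), which is precisely what Proposition \ref{PR_characteristics}(ii) is designed to handle.
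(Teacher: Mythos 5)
Your overall strategy coincides with the paper's: time-reverse on $[0,T]$ via $f_{T-t}$, invoke Theorems~\ref{TH_SMP-EF-LC} and~\ref{TH_BCM-EF-VF} to get an evolution family and a Herglotz field $G^T$, glue the fields $G^T(\cdot,T-\cdot)$ across horizons using essential uniqueness, and use Proposition~\ref{PR_characteristics} to obtain the uniform null-set in (ii) and the first-integral argument in (iii). Your treatment of (ii), (iii) and of the essential uniqueness of $G$ is in substance the paper's proof (the paper derives the PDE by citing the known formula $\partial\varphi^T_{s,T}(z)/\partial s=-G_T(z,s)\,(\varphi^T_{s,T})'(z)$ from \cite{BCM1} rather than your chain-rule computation at $t=s$, which would anyway require the same Lebesgue-point and normal-family justifications, so this is a cosmetic difference).

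Statement (i), however, is where your argument has genuine gaps. First, ``differentiating $f_t(g_t(z))=z$'' presupposes that $t\mapsto g_t(z)$ is locally absolutely continuous (hence a.e.\ differentiable), which you have not established; the paper gets this from the identity $g_s|_{\Omega_{t_0}}=\varphi^T_{T-t_0,T-s}\circ g_{t_0}$ together with the regularity of evolution families in the initial-time variable (\cite[Theorem~3.6(iii)]{SMP_annulusI}), and that result moreover supplies a null-set independent of $z$ and $s$ --- exactly what is needed for the first clause of (i), namely that for every $t\notin N$ the limit defining $\partial g_t/\partial t$ exists for \emph{all} $z\in\Omega_t$ simultaneously and is holomorphic there. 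Your pointwise identity $\partial_t g_t(z)=-G(g_t(z),t)$ only yields a $z$-dependent exceptional set, and you apply Proposition~\ref{PR_characteristics}(ii) only to $F(z,t)=f_t(z)$, not to the inverse maps. Second, maximality of $w_z$ is \emph{not} automatic from the definition of $t(z)$: that definition tells you where $g_t(z)$ exists, not that a solution of the ODE cannot be continued past $t(z)$. One must exclude an extension $w_z^*$ beyond $t(z)$: by Proposition~\ref{PR_characteristics}(iii) applied to the solution $F(z,t)=f_t(z)$ of \eqref{EQ_dec-PDE}, any such extension satisfies $f_t\big(w_z^*(t)\big)=z$ on its domain, hence $z\in\Omega_t$ there, contradicting $t>t(z)$. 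This is precisely the paper's argument; its ingredients are already present in your proposal (you use the same first integral for uniqueness in (iii)), but the step is needed and cannot be dismissed.
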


\begin{remark}
In terminology of~\cite{Bauer}, a {\it Loewner chordal family}~$(F_t)_{t\ge0}$
is a family of holomorphic self-maps of~$\UH$ satisfying $F_t(\UH)\subset
F_s(\UH)$ whenever $0\le s\le t$ and such that for each $t\ge0$,
$$
F_t(\zeta)=\zeta-\frac{t}\zeta+\gamma_t(\zeta),
$$
where $\gamma_t:\UH\to\Complex$ is a holomorphic function with\footnote{Here
$\angle\lim$ stands for the angular limit.}
$\angle\lim_{\zeta\to\infty}\zeta\gamma_t(\zeta)=0$. It is easy to see that
$C^1$-curves in $\UH\cup\{\infty\}$ going to~$\infty$ within a Stolz angle are
mapped by each $F_t$ onto \hbox{$C^1$-curves,} with the angles between them
at~$\infty$ being preserved. Hence for any $s\ge0$, any $k>1$ and any $k'>k$
there exist $R,R'>0$ such that ${D(R,k)\subset F_s\big(D(R',k')\big)}$, where
by $D(R,k)$ we denote the angular domain $\{\zeta:k\,\Im \zeta>|\zeta|>R\}$.
Using this fact it is not difficult to show that for any $s\ge0$ and any
$t\ge0$ the holomorphic function ${\Phi_{s,t}:=F_s^{-1}\circ F_t:\UH\to\UH}$
has a similar expansion
${\Phi_{s,t}(\zeta)=\zeta-(t-s)/\zeta+\gamma_{s,t}(\zeta)}$ with
$\angle\lim_{\zeta\to\infty}\zeta\gamma_{s,t}(\zeta)=0$. Then
$|\Phi_{s,t}(\zeta)-\zeta|\le(t-s)/\Im\zeta$, see, e.g.,
\cite[p.\,7--12]{Aleks1983} or \cite[p.\,567--568]{SMP2}. Since for each $T>0$
the family $\{F_s:s\in[0,T]\}$ is locally uniformly bounded (again use
$|F_s(\zeta)-\zeta|\le s/\Im \zeta$), this inequality leads to an estimate for
$|F_t(\zeta)-F_s(\zeta)|=|F_s(\Phi_{s,t}(\zeta))-F_s(\zeta)|$ in terms
of~$t-s$, which in turn implies that  up to the Cayley map, Loewner chordal
families defined in~\cite{Bauer} are a particular case of decreasing Loewner
chains of order~$d$, introduced in this paper. Thus Theorem~\ref{TH_f_t-G}(ii)
can be regarded an extension of~\cite[Theorem~5.3]{Bauer} to the general case,
while Theorem~\ref{TH_G-f_t}(iii) represents an extension
of~\cite[Theorem~5.6]{Bauer}.
\end{remark}

\subsection{Proof of Theorem~\ref{TH_f_t-G}}
Let us fix any $T>0$ and define
$$
h_t^T:=\left\{
\begin{array}{ll}
 f_{T-t}, & \text{if $t\in[0,T]$},\\
 \id_\UD, & \text{if $t\in(T,+\infty)$}.\\
\end{array}\right.
$$
It is easy to see that $(h^T_t)_{t\ge0}$ is an (increasing) Loewner chains of
order~$d$. By Theorem~\ref{TH_SMP-EF-LC} there exist an evolution family
$(\varphi^T_{s,t})$ of order~$d$ such that $h^T_s=h^T_t\circ\varphi^T_{s,t}$
whenever $0\le s\le t$. In particular,
\begin{equation}\label{EQ_T1}
f_{T-s}=\varphi_{s,T}^T\quad\text{for each $s\in[0,T]$}.
\end{equation}
Denote by $G_T$ the Herglotz vector field of order $d$ that corresponds to the
evolution family~$(\varphi_{s,t}^T)$ in the sense of
Theorem~\ref{TH_BCM-EF-VF}.  Then by \cite[Theorem 6.6]{BCM1}, $\partial
\varphi^T_{s,T}(z)/\partial s=-G_T(z,s)(\varphi^T_{s,T})'(z)$ for all $z\in\UD$ and
a.e. $s\in[0,T]$. Note that from the very definition of a decreasing Loewner
chain it follows easily that $F(z,t):=f_t(z)$ satisfies conditions S1\,--\,S3.
Hence using~\eqref{EQ_T1}, we may conclude, in accordance
with~Subsect.\,\ref{SS_solution}, that $F|_{\UD\times[0,T]}$ is a solution
to~\eqref{EQ_dec-PDE} with $G(z,t):=G_T(z,T-t)$ for all $z\in\UD$ and all
$t\in[0,T]$. The vector field $G$ is defined by $(f_t)$ via~\eqref{EQ_dec-PDE}
uniquely up to a null-set in $[0,T]$. That is why there exists a null-set $N_0$
and a function $G:\UD\times[0,+\infty)\to\Complex$ holomorphic in the first
(complex) variable and measurable in the second (real) variable such that for
each $n\in\Natural$, we have $G(z,t)=G_n(z,n-t)$ for all $t\in[0,n]\setminus
N_0$ and all $z\in\UD$. Clearly, such a function $G$ is a Herglotz vector field
of order~$d$. In this way $F(z,t)=f_t(z)$ becomes a solution
to~\eqref{EQ_dec-PDE} on the whole semiaxis~$[0,+\infty)$. Bearing in mind
Proposition~\ref{PR_characteristics}(ii) and the fact that by definition
$f_0=\id_\UD$, we see that we have proved assertion~(ii).

Let us now prove (i). Fix again $T>0$. Recall that by $\Delta(E)$ we denote the
set~$\{(s,t):s,t\in E, \, s\le t\}$. By construction,
\begin{equation}\label{EQ_g_phi}
g_s|_{\Omega_t}=\varphi^T_{T-t,T-s}\circ g_t\quad \text{for all
$(s,t)\in\Delta([0,T])$}.
\end{equation}
Fix any $z\in\UD$. Choose any $t_0\le t(z)$ and let $T:=[t_0]+1$, where $[x]$
stands for the integer part of~$x$. Then by~\eqref{EQ_g_phi},
$g_s(z)=\varphi^T_{T-t_0,T-s}\big(g_{t_0}\big)$ for all $s\in[0,t_0]$. It is
known, see, e.g., \cite[Theorem~3.6(iii)]{SMP_annulusI}, that for each
evolution family (i.e., for each $T>0$ in our case) there exists a null-set
$N(T)\subset [0,+\infty)$ such that for every $z\in\UD$ and every~$s\ge0$,
$(\partial/\partial t)\varphi^T_{s,t}(z)$ exists and equals
$G_T\big(\varphi^T_{s,t}(z),t\big)$ whenever $t\ge s$ and $t\not\in N(T)$. Note
that $N(T)$ depends neither on~$z$, nor on~$t$. Bearing in mind that
$(\varphi_{s,t}^T)$ is locally absolutely continuous in~$t$, we therefore
conclude that $s\mapsto g_s(z)$ is absolutely continuous on~$[0,t_0]$ and that
$(d/ds)g_s(z)$ exists and equals $-G(g_s(z),s)$ for all
$s\in[0,t_0]\setminus(N_0\cup N(T))$.

Since $t_0$ can be chosen arbitrarily close to $t(z)$, the above argument
proves assertion~(i) with $N:=N_0\cup\big(\cup_{n\in\Natural}N(n)\big)$, except
for the fact that the solution $w_z(t):=g_t(z)$ to~\eqref{EQ_dec-ini-ODE} has
no extension beyond $t=t(z)$. Assume on the contrary that $t(z)<+\infty$ and
that such an extension $w_z^*$ exists. Denote by $E_z$ its domain of
definition. Then by Proposition~\ref{PR_characteristics}(iii) and assertion
(ii) of the theorem we are proving now, $f_t(w_z^*(t))=z$ for all $t\in E_z$.
In particular, $z\in f_t(\UD)=\Omega_t$ for all $t\in E_z$. Hence $\sup E_z\le
t(z)$. This contradiction completes the proof of assertion~(i).

It remains to show that assertion (iii) holds. First of all, the fact that
$F(z,t):=F_0\big(f_t(z)\big)$ solves the initial value problem~$F(\cdot,0)=F_0$
for~\eqref{EQ_dec-PDE} follows immediately from~(ii). To prove the uniqueness
of the solution, we recall that by assertion~(i) we proved above, for any
$z\in\UD$ the function $[0,t(z))\ni z\mapsto g_t(z)$
solves~\eqref{EQ_dec-ini-ODE}. Therefore by
Proposition~\ref{PR_characteristics}(iii), $F(g_t(z),t)=F(g_0(z),0)=F_0(z)$ for
any $t\ge0$ and any $z\in\Omega_t$. The proof of~(iii) is now finished, because
$g_t=f_t^{-1}$ for all $t\ge0$.

Finally the essential uniqueness of~$G$ holds because each of the
equations~\eqref{EQ_dec-PDE} and~\eqref{EQ_dec-ini-ODE} defines $G$ uniquely up
to a null-set in $[0,+\infty)$.
\proofbox%

\subsection{Proof of Theorem~\ref{TH_G-f_t}}
Statement~(i) of this theorem is a standard fact in the theory of
Carath\'eofory ODEs, see, e.g., \cite[Sect.\,2]{SMP_annulusI}, which in
particular contains the proof of a more general statement \cite[Theorem
2.3(i)]{SMP_annulusI}.

In order to prove (ii) we fix an arbitrary $T>0$ and consider the following
Herglotz vector field of order~$d$,
$$
G_T(z,t):=\left\{
\begin{array}{ll}
G(z,T-t), & \text{if $t\in[0,T]$,}\\
0, & \text{if $t>T$}.\\
\end{array}
\right.
$$
By Theorem~\eqref{TH_BCM-EF-VF}, there exists an evolution
family~$(\varphi_{s,t}^T)$ or order $d$ such that for each $z\in \UD$ and
$s\ge0$, the function $[s,+\infty)\ni t\mapsto
w^T_{\zeta,s}(t):=\varphi_{s,t}^T(\zeta)\in\UD$ is the unique solution to the
initial value problem
\begin{equation}\label{EQ_proof_INI}
\frac{dw}{dt}=G_T(w,t),~~t\ge s,\quad w(s)=\zeta.
\end{equation}
It follows that for any $z\in\tilde\Omega_T:=\varphi^T_{0,T}(\UD)$ the unique
solution $t\mapsto w_z(t)=:g_t(z)$ to the initial value problem~\eqref{ode1} is
defined at least for all $t\in[0,T]$ and given for these $t$ by the formula
$w_z(t)=\varphi^T_{0,T-t}(\zeta)$, where
$\zeta:=\big(\varphi^T_{0,T}\big)^{-1}(z)$. Therefore, $\tilde\Omega_T\subset
\Omega_T$. On the other hand, the uniqueness of the solution
to~\eqref{EQ_proof_INI} implies that for any $z\in\Omega_T$, the restriction
$w_z|_{[0,T]}$ coincides with $[0,T]\ni t\mapsto \varphi^T_{0,T-t}(\zeta)$,
where $\zeta:=w_z(T)$. In particular, for any $z\in\Omega_T$, we have
$z=w_z(0)=\varphi^T_{0,T}(\zeta)\in\tilde\Omega_T$. Thus
$\tilde\Omega_T=\Omega_T$ and
$g_T=\big(\varphi^T_{0,T}\big)^{-1}:\tilde\Omega_t\to\UD$. Since $T>0$ is
arbitrary, this proves~(ii).

To prove~(iii), fix again an arbitrary $T>0$. By the above argument, for each
$t\in[0,T]$,
\begin{equation*}
g_t|_{\Omega_T}=\varphi^T_{0,T-t}\circ \big(\varphi^T_{0,T}\big)^{-1}=
\varphi^T_{0,T-t}\circ\big(\varphi^T_{T-t,T}\circ\varphi^T_{0,T-t}\big)^{-1}
=\varphi^{-1}_{T-t,T}|_{\Omega_T},
\end{equation*}
where we used condition~EF2 from Definition~\ref{D_BCM-EF} of an evolution
family. By the uniqueness principle for holomorphic functions this means that
$f_t=g_t^{-1}=\varphi_{T-t,T}$ for all $t\in[0,T]$. Consider the family
$$
h_t^T:=\left\{
\begin{array}{ll}
\varphi_{t,T}, &\text{if $t\in[0,T]$,}\\
\id_\UD,&        \text{if $t\in[0,T]$.}\\
\end{array}
\right.
$$
Clearly,
\begin{equation}\label{EQ_h_t-f_t}
f_t=h^T_{T-t}\quad \text{for all $t\in[0,T]$}.
\end{equation}
Note that by Remark~\ref{RM_EF-uni}, each function $\varphi_{s,t}^T$ is
univalent in~$\UD$. Hence for each $t\ge0$, $h_t^T$ is univalent in~$\UD$.
Furthermore, $\varphi^T_{s,t}=\id_\UD$ whenever $t\ge s\ge T$ because
$G_T(\cdot,t)\equiv0$ for all $t\ge T$. Taking into account EF2, we conclude
also that $\varphi^T_{s,t}=\varphi^T_{s,T}$ if $0\le s\le T\le t$. Now using
again EF2, it is easy to see that $h_t^T\circ\varphi_{s,t}^T=h_s$ whenever
$0\le s\le t$. By \cite[Lemma~3.2]{SMP}, $(h_t^T)$ is a Loewner chain of
order~$d$. Since $T>0$ can be chosen arbitrarily, in view
of~\eqref{EQ_h_t-f_t}, this implies that~$(f_t)$ is a decreasing Loewner chain
of order~$d$. As we have already mentioned in the proof of
Theorem~\ref{TH_f_t-G}, any (decreasing or increasing) Loewner chain satisfies
conditions S1\,--\,S3 in the definition of a solution to the generalized
Loewner\,--\,Kufarev PDE, formulated in Subsect.\,\ref{SS_solution}.

Let us show that S4 is also satisfied. Indeed, according to \cite[Theorem
6.6]{BCM1} for each $z\in\UD$ and for a.e. $t\in[0,T]$,
$$
\frac{\partial f_t(z)}{\partial t}=\frac{\partial\varphi^T_{T-t,T}(z)}{\partial
t}=\frac{\partial\varphi^T_{T-t,T}(z)}{\partial z}G_T(z,T-t)=\frac{\partial
f_t(z)}{\partial z}G(z,t).
$$
Again, since one can choose $T>0$ arbitrarily large, S4 holds for the whole
semiaxis $E=[0,+\infty)$. Thus $\UD\times[0,+\infty)\ni(z,t)\mapsto f_t(z)$ is
a solution to~\eqref{EQ_LK-PDE-ini}. The uniqueness of the solution is proved
in the same way as in Theorem~\ref{TH_f_t-G}. This completes the proof
of~(iii). \proofbox

\section{Reverse evolution families versus decreasing Loewner chains and Herglotz vector
fields}\label{S_REF} In this section we would like to discuss in more detail
the notion of a reverse evolution family, introduced in
Subsect.~\ref{SS_results}, see Definition~\ref{D_Rev_EF}, and its relationship
with decreasing Loewner chains and Herglotz vector fields.

\subsection{Statements of results}
The theorem below is an analogue of Theorem~\ref{TH_SMP-EF-LC}.
\begin{theorem}\label{TH_REF_DC}
For each $d\in[1,+\infty]$ The formula
\begin{equation}\label{EQ_REF_DC}
\varphi_{s,t}:=f_s^{-1}\circ f_t,\quad (s,t)\in\Delta\big([0,+\infty)\big),
\end{equation}
establishes a 1-to-1 correspondence between decreasing Loewner chains~$(f_t)$
of order~$d$ and reverse evolution families~$(\varphi_{s,t})$ of the same
order~$d$. Namely, for every decreasing Loewner chain $(f_t)$ of order~$d$ the
family $(\varphi_{s,t})_{(s,t)\in\Delta([0,+\infty))}$ defined
by~\eqref{EQ_REF_DC} is a reverse evolution family of order~$d$. Conversely,
for any reverse evolution family $(\varphi_{s,t})$ of order~$d$ the family
$\big(f_t\big)_{t\ge0}=\big(\varphi_{0,t}\big)_{t\ge0}$ is a decreasing Loewner chain of
order~$d$ satisfying equality~\eqref{EQ_REF_DC}.
\end{theorem}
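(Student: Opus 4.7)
The proof splits into two directions, the second being the more delicate. For the forward direction, given a decreasing $L^d$-chain $(f_t)$, I would set $\varphi_{s,t}:=f_s^{-1}\circ f_t$; this is a well-defined univalent holomorphic self-map of $\UD$ by DC1 and the inclusion $f_t(\UD)\subset f_s(\UD)$ from DC2. Then REF1 and REF2 reduce to direct algebraic manipulations, so the only substantive task is REF3. My plan is to deduce it from EF3 for an associated (increasing) evolution family via time reversal: fix $T>0$, let $h_t:=f_{T-t}$ on $[0,T]$, extended by $h_T$ for $t>T$, which is an increasing $L^d$-Loewner chain, and apply Theorem~\ref{TH_SMP-EF-LC} to obtain an $L^d$-evolution family $(\psi_{s,t})$ with $\psi_{s,t}=\varphi_{T-t,T-s}$ for $0\le s\le t\le T$. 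The key estimate combines EF2 for $\psi$, which gives
\[
\varphi_{s,u}(z)-\varphi_{s,t}(z)
=\psi_{T-u,T-s}(z)-\psi_{T-u,T-s}\bigl(\psi_{T-t,T-u}(z)\bigr),
\]
with the Schwarz-Pick inequality applied to the self-map $\psi_{T-u,T-s}$, yielding $|\varphi_{s,u}(z)-\varphi_{s,t}(z)|\le \tfrac{2}{1-|z|}\,|z-\psi_{T-t,T-u}(z)|$. Since $\psi_{T-t,T-t}=\id_\UD$, EF3 for $\psi$ bounds the remaining factor by an $L^d$-integral over $[T-t,T-u]$, and the change of variable $\xi=T-\eta$ turns this into an integral over $[u,t]$ of an $L^d$-function of $\eta$, exactly as demanded by REF3.

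For the converse, given a reverse $L^d$-evolution family $(\varphi_{s,t})$, I would set $f_t:=\varphi_{0,t}$. Then $f_0=\id_\UD$ by REF1, and REF2 with middle index $s$ gives the factorization $f_t=f_s\circ\varphi_{s,t}$, from which both DC2 and the formula $\varphi_{s,t}=f_s^{-1}\circ f_t$ follow once $f_s$ is known to be univalent (i.e., DC1). Moreover, once DC1 is established, DC3 is a consequence of REF3 combined with Theorem~\ref{TH_LC}: REF3 taken at $s=0$ shows that $t\mapsto f_t(z)\in AC^d([0,T],\UD)$ for every $z\in\UD$, in particular at any two distinct points, so Theorem~\ref{TH_LC} upgrades this pointwise regularity to the uniform $L^d$-bound on compact subsets required by DC3.

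The main obstacle is therefore DC1, the univalence of each $f_t=\varphi_{0,t}$. I plan to show that for each $s\ge 0$ the set $S_s:=\{t\ge s:\varphi_{s,t}\text{ is univalent}\}$ equals $[s,+\infty)$. Closedness from below is the easy half: if $t_n\uparrow T$ with $t_n\in S_s$ and $\varphi_{s,T}(z_1)=\varphi_{s,T}(z_2)$ for distinct $z_1,z_2\in\UD$, then REF2 gives $\varphi_{s,T}=\varphi_{s,t_n}\circ\varphi_{t_n,T}$, so univalence of $\varphi_{s,t_n}$ forces $\varphi_{t_n,T}(z_1)=\varphi_{t_n,T}(z_2)$; letting $n\to\infty$ and using REF3 at the two points $z_1,z_2$ yields $z_1=z_2$, a contradiction. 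The harder half is openness past a given point $t_0\in S_s$, which reduces to showing that $\varphi_{t_0,t}$ is univalent for $t$ slightly greater than $t_0$. Here mere pointwise convergence of $\varphi_{t_0,t}$ to $\id_\UD$ as $t\to t_0^+$ is not enough, so the cleanest path is to differentiate the semigroup identity $\varphi_{s,t+h}=\varphi_{s,t}\circ\varphi_{t,t+h}$ to extract, in analogy with the proof of Theorem~\ref{TH_f_t-G}, an essentially unique Herglotz vector field $G$ of order~$d$ satisfying $\partial_t\varphi_{s,t}(z)=\varphi_{s,t}'(z)\,G(z,t)$; then Theorem~\ref{TH_G-f_t}(iii) identifies $\varphi_{s,t}$ with the inverse conformal maps of the decreasing image domains produced there, and univalence follows. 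This ODE/characteristic identification---the reverse-evolution analogue of Remark~\ref{RM_EF-uni}---is the technical heart of the argument.
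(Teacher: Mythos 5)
Your forward direction is correct and follows the paper's own route: reverse time on $[0,T]$, note that $h^T_t:=f_{T-t}$ (extended by $\id_\UD$) is an increasing $L^d$-Loewner chain, and pull REF3 back from the associated evolution family. Your way of getting REF3 — writing $\varphi_{s,u}(z)-\varphi_{s,t}(z)=\psi_{T-u,T-s}(z)-\psi_{T-u,T-s}\bigl(\psi_{T-t,T-u}(z)\bigr)$ and using Schwarz--Pick plus EF3 of $\psi$ at the indices $(T-t,T-t,T-u)$ — is sound and even a bit more direct than the paper, which channels this through Proposition~\ref{PR_evol_equiv}. Likewise, once DC1 and DC2 are in hand, deducing DC3 from REF3 at $s=0$ via the two-point criterion (Corollary~\ref{C_wDC}) is legitimate and not circular.

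The genuine gap is in the converse, exactly at the point you yourself flag as ``the technical heart'': the extraction of a Herglotz vector field $G$ with $\partial_t\varphi_{s,t}(z)=\varphi_{s,t}'(z)\,G(z,t)$ from REF1--REF3 alone. This is not a routine differentiation of the semigroup identity, and it cannot be done ``in analogy with the proof of Theorem~\ref{TH_f_t-G}'': that proof starts from a decreasing \emph{chain}, reverses time, and invokes Theorems~\ref{TH_SMP-EF-LC} and~\ref{TH_BCM-EF-VF}; to imitate it you would first need to know that $\varphi^T_{s,t}:=\varphi_{T-t,T-s}$ is an $L^d$-evolution family, i.e.\ that it satisfies EF3. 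But REF3 controls the variation of $\varphi_{s,t}$ in the \emph{second} index, and under time reversal this becomes regularity of $\varphi^T_{s,t}$ in the \emph{first} index, not EF3 — the paper's remark after Definition~\ref{D_Rev_EF} points out precisely that EF3 and REF3 do not correspond under the time-reversing trick. Bridging this asymmetry is the real content of the paper's converse: Proposition~\ref{PR_evol_equiv} shows (via the compactness argument giving $\varphi_{u,t}(K)\subset r\UD$ uniformly, together with the two-point rigidity Corollary~\ref{C_pseudo_symm}) that REF3 forces the $L^d$-estimate in the first index uniformly on compacta, hence the reversed family is an honest evolution family; univalence of every $\varphi_{s,t}$ then follows at once from Remark~\ref{RM_EF-uni}, and DC3 from assertion~(iii) of that proposition. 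In your write-up this step is asserted, not proved; a direct ``reverse'' analogue of Theorem~\ref{TH_BCM-EF-VF} would essentially require redoing the hard part of \cite{BCM1} (note that trying to pass from $\varphi_{s,t}\circ\varphi_{t,t+h}$ to $\varphi_{t,t+h}$ needs a lower distortion bound for $\varphi_{s,t}$, i.e.\ the univalence you are trying to prove). Moreover, if the identification with Theorem~\ref{TH_G-f_t}(iii) were available it would give univalence of all $\varphi_{s,t}$ simultaneously, so your open/closed argument for $S_s$ (whose ``closedness'' half is fine) adds nothing and does not localize the difficulty: the entire converse still rests on the unestablished vector-field extraction.
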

In the situation described in the above theorem we will say that the decreasing
Loewner chain $(f_t)$ and the reverse evolution family $(\varphi_{s,t})$ are
{\it associated with} each other.

In Theorem~\ref{TH_G-f_t} we described the solutions to the generalized
Loewner\,--\,Kufarev ODE $dw/ds=-G(w,s)$ with the initial condition at $s=0$.
It appeared that this initial value problem generate, for a fixed Herglotz
vector field $G$ and variable initial data, the family of the inverse mappings
$g_s:=f_s^{-1}$ of some decreasing Loewner chain $(f_t)$. The following theorem
shows that if instead  we consider the initial condition {\it at the right
end-point} $w(t)=z\in\UD$, where $t>0$ and the solutions are looked for on the
interval $s\in[0,t]$, then we obtain the reverse evolution
family~$(\varphi_{s,t})$ associated with $(f_t)$. The converse statement is
true and it is also included in this theorem, which can be regarded as a
``decreasing" analogue of Theorem~\ref{TH_BCM-EF-VF}.
\begin{theorem}\label{TH_REF-EQS}
Let $d\in[1,+\infty]$. The following statements hold:
\begin{mylist}
\item[(i)] The generalized Loewner\,--\,Kufarev ODE
\begin{equation}\label{EQ_LK1-INI}
\frac{dw}{ds}=-G(w,s),~~~ s\in[0,t],\quad w(t)=z,
\end{equation}
establishes essentially a 1-to-1 correspondence between reverse evolution
families~$(\varphi_{s,t})$ of order~$d$ and Herglotz vector fields~$G$ of the
same order. Namely, given a reverse evolution family~$(\varphi_{s,t})$ of
order~$d$, there exists an essentially unique\footnote{This means ``unique up a
null-set on the $t$-axis".} Herglotz vector field~$G$ of order~$d$ such that
for each $t\ge0$ and $z\in\UD$ the function $[0,t]\ni s\mapsto
w(s):=\varphi_{s,t}(z)$ solves the initial value problem~\eqref{EQ_LK1-INI}.
Conversely, given a Herglotz vector field~$G$ of order~$d$, for every~${t>0}$
and every~$z\in\UD$ the initial value problem~\eqref{EQ_LK1-INI} has a unique
solution $s\mapsto w=w_{z,t}(s)$ defined for all $s\in[0,t]$ and the formula
$\varphi_{s,t}(z):=w_{z,t}(s)$ for all $z\in\UD$ and all
$(s,t)\in\Delta\big([0,+\infty)\big)$ defines a reverse evolution
family~$(\varphi_{s,t})$ of order~$d$.

\item[(ii)] Let $(\varphi_{s,t})$ and $G$ be as in statement~$(i)$ above. Then a family
$(f_t)_{t\ge0}$ of holomorphic functions in~$\UD$ is the decreasing Loewner
chain associated with~$(\varphi_{s,t})$ if and only if the function
$F:\UD\times[0,+\infty)\to\Complex$, defined by $F(z,t):=f_t(z)$ for all
$t\ge0$ and all $z\in\UD$ is a solution to the following initial value problem
for the Loewner\,--\,Kufarev~PDE
\begin{equation}\label{EQ_LK2-DEC}
\frac{\partial F}{\partial t}=\frac{\partial F}{\partial z} G(z,t),\quad
t\ge0,\quad F(\cdot,0)=\id_\UD.
\end{equation}
\end{mylist}
\end{theorem}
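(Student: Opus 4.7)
The strategy is to chain together the correspondences already established in Theorem~\ref{TH_REF_DC} (between reverse evolution families and decreasing Loewner chains), Theorem~\ref{TH_f_t-G} (from decreasing Loewner chains to Herglotz vector fields, with the characteristic ODE satisfied by $g_t := f_t^{-1}$), and Theorem~\ref{TH_G-f_t} (in the opposite direction). Essentially, the whole content of Theorem~\ref{TH_REF-EQS} will follow once we verify that the curve $s\mapsto\varphi_{s,t}(z)$ is simply the integral curve of the Carath\'eodory ODE $dw/ds=-G(w,s)$ parametrized by its value at the \emph{right} endpoint instead of the left one.

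For the forward direction of~(i), I start with a reverse evolution family~$(\varphi_{s,t})$ of order~$d$. Theorem~\ref{TH_REF_DC} provides the associated decreasing Loewner chain $(f_t)$ with $f_t=\varphi_{0,t}$; Theorem~\ref{TH_f_t-G} then yields an essentially unique Herglotz vector field~$G$ of order~$d$ such that, for each $\zeta\in\UD$, the curve $s\mapsto g_s(\zeta):=f_s^{-1}(\zeta)$ is the maximal solution of $dw/ds=-G(w,s)$ with $w(0)=\zeta$. Now fix $t>0$ and $z\in\UD$ and set $\zeta:=f_t(z)\in\Omega_t:=f_t(\UD)$; then $\varphi_{s,t}(z)=f_s^{-1}(f_t(z))=g_s(\zeta)$ for all $s\in[0,t]$, so $s\mapsto \varphi_{s,t}(z)$ is an integral curve of the ODE, and at $s=t$ it satisfies $g_t(\zeta)=f_t^{-1}(f_t(z))=z$. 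Hence it solves~\eqref{EQ_LK1-INI}. For the converse direction, given~$G$, Theorem~\ref{TH_G-f_t} yields a decreasing Loewner chain~$(f_t)$; Theorem~\ref{TH_REF_DC} converts it into a reverse evolution family $(\varphi_{s,t})=(f_s^{-1}\circ f_t)$. The same identity $\varphi_{s,t}(z)=g_s(f_t(z))$, combined with the fact that $f_t(z)\in\Omega_t$ forces $t\le t(f_t(z))$, shows that $s\mapsto\varphi_{s,t}(z)$ solves~\eqref{EQ_LK1-INI} on all of $[0,t]$. Uniqueness of this solution on $[0,t]$ reduces to uniqueness of integral curves of the Carath\'eodory ODE passing through the point $(t,z)$, which is standard. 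Essential uniqueness of the Herglotz vector field for a given $(\varphi_{s,t})$ follows by tracing the composition backwards: any two candidate Herglotz vector fields produce the same $(f_t)=(\varphi_{0,t})$, and Theorem~\ref{TH_f_t-G} then forces their coincidence off a null-set.

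Part~(ii) is now almost immediate. Given $(\varphi_{s,t})$ with associated decreasing Loewner chain~$(f_t)$ and Herglotz vector field~$G$ from~(i), Theorem~\ref{TH_f_t-G}(ii) already asserts that $F(z,t):=f_t(z)$ solves PDE~\eqref{EQ_LK2-DEC}, while $F(\cdot,0)=\id_\UD$ is the normalization built into Definition~\ref{D_decLC}. For the converse, if any $F$ solves the initial value problem~\eqref{EQ_LK2-DEC} with $F(\cdot,0)=\id_\UD$, then Theorem~\ref{TH_f_t-G}(iii), applied with $F_0:=\id_\UD$ to the decreasing Loewner chain~$(f_t)$ associated with~$(\varphi_{s,t})$, asserts that the unique such solution is $F(\cdot,t)=\id_\UD\circ f_t=f_t$; thus $F$ coincides with the decreasing Loewner chain associated with~$(\varphi_{s,t})$. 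The main potential obstacle is the bookkeeping of null-sets coming from the different cited theorems and checking that the terminal-endpoint formulation in~\eqref{EQ_LK1-INI} legitimately parametrizes the same integral curves as the initial-endpoint formulation used in Theorem~\ref{TH_G-f_t}; this last point is routine once one invokes the standard uniqueness theory for Carath\'eodory ODEs, so no new analytic input is required.
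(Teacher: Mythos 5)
Your proposal is correct, but it takes a genuinely different route from the paper's. The paper proves part~(i) at the level of evolution families: for each $T>0$ it invokes Proposition~\ref{PR_evol_equiv}(iv) to write $\varphi_{s,t}=\varphi^T_{T-t,T-s}$ for an increasing $L^d$-evolution family, applies Theorem~\ref{TH_BCM-EF-VF} to obtain a field $G_T$, and glues the time-reversed fields $G_n(\cdot,n-\cdot)$ over $n\in\Natural$; uniqueness for the terminal-value problem~\eqref{EQ_LK1-INI} then comes for free from forward uniqueness for the reversed field, and in part~(ii) the paper explicitly compares the field $G^*$ furnished by Theorem~\ref{TH_f_t-G} with $G$, using the uniqueness clause of~(i). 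You instead work at the level of chains: $(\varphi_{s,t})\mapsto (f_t)=(\varphi_{0,t})$ by Theorem~\ref{TH_REF_DC}, then Theorems~\ref{TH_f_t-G} and~\ref{TH_G-f_t} supply the field and the flow $g_s=f_s^{-1}$, and you read off $\varphi_{s,t}=g_s\circ f_t$ as the integral curve parametrized by its value at the right endpoint. This is legitimate (no circularity, since those results precede Theorem~\ref{TH_REF-EQS}) and somewhat shorter, but two points that the paper handles explicitly should be spelled out: first, backward uniqueness for the Carath\'eodory ODE, which you invoke as standard — correct, since the right-hand side is holomorphic in $w$ with $L^1$ majorants, hence locally Lipschitz, whereas the paper sidesteps it by using forward uniqueness for $G(\cdot,T-\cdot)$; second, in part~(ii) the PDE~\eqref{EQ_LK2-DEC} involves the specific $G$ of statement~(i), while Theorem~\ref{TH_f_t-G}(ii),(iii) are stated for the field that theorem itself produces, so you must say that these agree a.e.\ — this follows because your construction in~(i) defines $G$ as exactly that field and your essential-uniqueness argument covers any other admissible version, but it is precisely the issue to which the paper devotes the final paragraph of its proof of~(ii), and your appeal to the uniqueness clause of Theorem~\ref{TH_f_t-G} (``tracing the composition backwards'') should be fleshed out to a verification that any candidate field satisfies one of that theorem's statements, or replaced by the standard argument that the ODE determines the field a.e.\ along the open sets $\varphi_{s,t}(\UD)$. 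Finally, a one-line patch in the forward direction of~(i): when $t=t(\zeta)$ for $\zeta=f_t(z)$, the maximal solution in Theorem~\ref{TH_f_t-G}(i) is only asserted on $[0,t)$, so note that $s\mapsto\varphi_{s,t}(z)$ is absolutely continuous up to $s=t$ (e.g.\ by Proposition~\ref{PR_evol_equiv}(v)) to conclude it solves~\eqref{EQ_LK1-INI} on the closed interval.
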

In the proofs of the above theorems we will use the following statement of
independent interest, which provides several alternatives for condition REF3 in
Definition~\ref{D_Rev_EF} equivalent  to EF3 under the assumption that EF1 and
EF2 are satisfied.
\begin{proposition}\label{PR_evol_equiv}
Let $d\in[1,+\infty]$. Suppose that a family $(\varphi_{s,t})_{0\le s\le t}$ of
holomorphic self-maps of~$\UD$ satisfies conditions REF1 and REF2 from
Definition\,\ref{D_Rev_EF}. Then the following conditions are equivalent:
\begin{mylist}
\item[(i)] for each $T>0$ there exist two distinct points $\zeta_j\in\UD$, $j=1,2$,
and a non-negative function $k_T\in L^{d}([0,T],\mathbb{R})$ such that for
$j=1,2$,
\begin{equation}\label{EQ_qqPR}
|\varphi_{s,t}(\zeta_j)-\varphi_{s,u}(\zeta_j)|\leq\int_{u}^{t}k_{T}(\xi)d\xi
\end{equation}
whenever $0\le s\le u\le t\le T$.

\item[(ii)] $(\varphi_{s,t})$ satisfies condition~REF3, i.e. $(\varphi_{s,t})$
is a reverse evolution family of order~$d$.

\item[(iii)] for any $T>0$ and any compact set $K\subset\mathbb{D}$  there exists a
non-negative function $k_{K,T}\in L^{d}([0,T],\mathbb{R})$ such that for all
$z\in K$,
\begin{equation}\label{EQ_K1}
|\varphi_{s,t}(z)-\varphi_{s,u}(z)|\leq\int_{u}^{t}k_{K,T}(\xi)d\xi
\end{equation}
whenever $0\le s\le u\le t\le T$.

\item[(iv)] for any $T>0$ there exists an evolution family~$(\varphi_{s,t}^T)$
of order~$d$ such that ${\varphi_{s,t}=\varphi_{T-t,T-s}^T}$ for all
$(s,t)\in\Delta\big([0,T]\big)$.

\item[(v)] for any $T>0$ and any compact set $K\subset\mathbb{D}$  there exists a
non-negative function $k_{K,T}\in L^{d}([0,T],\mathbb{R})$ such that for all
$z\in K$,
\begin{equation}\label{EQ_regularity_in_s}
|\varphi_{s,t}(z)-\varphi_{u,t}(z)|\leq\int_{s}^{u}k_{K,T}(\xi)d\xi
\end{equation}
whenever $0\le s\le u\le t\le T$.

\end{mylist}

In particular, every element of any reverse evolution family is a univalent
function in~$\UD$.
\end{proposition}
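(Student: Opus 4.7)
The plan is to close the cycle $\text{(iii)}\Rightarrow\text{(ii)}\Rightarrow\text{(i)}\Rightarrow\text{(iv)}\Rightarrow\text{(iii)}$ together with $\text{(iv)}\Leftrightarrow\text{(v)}$, and then deduce univalence from~(iv) via Remark~\ref{RM_EF-uni}. The implications $\text{(iii)}\Rightarrow\text{(ii)}\Rightarrow\text{(i)}$ are immediate by restricting the compact set. The first non-trivial step is $\text{(i)}\Rightarrow\text{(iii)}$: by REF2 one has $\varphi_{s,t}(\zeta)-\varphi_{s,u}(\zeta)=\varphi_{s,u}(\varphi_{u,t}(\zeta))-\varphi_{s,u}(\zeta)$, so the Schwarz--Pick contraction gives $\rho_\UD(\varphi_{s,t}(\zeta),\varphi_{s,u}(\zeta))\le\rho_\UD(\varphi_{u,t}(\zeta),\zeta)$. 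Corollary~\ref{C_pseudo_symm} applied to $\varphi_{u,t}$ bounds this pseudohyperbolic distance uniformly for $\zeta$ in a compact subset of~$\UD$ by $\tilde C(|\varphi_{u,t}(\zeta_1)-\zeta_1|+|\varphi_{u,t}(\zeta_2)-\zeta_2|)$; each summand is at most $\int_u^t k_T$ by~(i), used with first index equal to $u$. Converting $\rho_\UD$ to Euclidean distance via $|w_1-w_2|\le 2\rho_\UD(w_1,w_2)$ then yields~(iii).

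The main implication is $\text{(iii)}\Rightarrow\text{(iv)}$. Set $\varphi^T_{s,t}:=\varphi_{T-t,T-s}$ for $(s,t)\in\Delta([0,T])$ and extend to $\Delta([0,+\infty))$ by $\id_\UD$ for $s\ge T$ and by $\varphi^T_{s,T}$ for $s\le T\le t$. Conditions EF1 and EF2 follow at once from REF1, REF2, so the task reduces to EF3, which upon translation becomes a pointwise regularity estimate for the \emph{first} index of $\varphi$: $|\varphi_{a,c}(z)-\varphi_{b,c}(z)|\le\int_b^a\tilde k_{z,T}$ for $b\le a\le c$. Writing $\varphi_{b,c}=\varphi_{b,a}\circ\varphi_{a,c}$ one gets $\varphi_{a,c}(z)-\varphi_{b,c}(z)=w-\varphi_{b,a}(w)$ with $w:=\varphi_{a,c}(z)$, and Corollary~\ref{C_pseudo_symm} applied now to $\varphi_{b,a}$, together with the two-point bound $|\varphi_{b,a}(\zeta_j)-\zeta_j|\le\int_b^a k$ supplied by~(iii), produces the desired inequality. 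The key technical obstacle is that the corollary requires $w$ to lie in a Euclidean compact subset of $\UD$, whereas a priori $w=\varphi_{a,c}(z)$ may approach $\partial\UD$ as $c$ increases. To resolve this, I plan to partition $[0,T]$ into finitely many sub-intervals $[T_i,T_{i+1}]$ so small that the pseudohyperbolic estimate from the previous paragraph yields $\rho_\UD(\varphi_{u,t}(0),0)<1/2$ throughout each; Schwarz--Pick then confines the orbit of any compact set under $\varphi_{u,t}$ to a fixed compact subset of~$\UD$ on each sub-interval, the rigidity estimate applies there, and the full bound on $[0,T]$ is reassembled by iterating REF2 across the partition.

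The remaining implications are shorter. For $\text{(v)}\Rightarrow\text{(iv)}$, EF3 for $(\varphi^T_{s,t})$ is simply the translation of~(v), since (v) is stronger than its pointwise version. For $\text{(iv)}\Rightarrow\text{(iii)}$ and $\text{(iv)}\Rightarrow\text{(v)}$, Theorem~\ref{TH_BCM-EF-VF} supplies a Herglotz vector field $G_T$ with $\partial_t\varphi^T_{s,t}(z)=G_T(\varphi^T_{s,t}(z),t)$, and by \cite[Theorem~6.6]{BCM1} also $\partial_s\varphi^T_{s,t}(z)=-G_T(z,s)\,(\varphi^T_{s,t})'(z)$. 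Since $|\varphi^T_{s,t}|<1$, Cauchy's estimate bounds $(\varphi^T_{s,t})'$ uniformly on compact subsets of~$\UD$; combined with condition WHVF3 for $G_T$, these formulas give uniform-on-compacts absolute continuity of $\varphi^T_{s,t}$ in both parameters, which translates via $\varphi_{u,v}=\varphi^T_{T-v,T-u}$ into~(iii) (from the $\partial_s$-formula) and~(v) (from the $\partial_t$-formula). Finally, every $\varphi_{s,t}$ equals $\varphi^T_{T-t,T-s}$ for any $T\ge t$, so its univalence is inherited from that of elements of the evolution family $(\varphi^T_{s,t})$ via Remark~\ref{RM_EF-uni}.
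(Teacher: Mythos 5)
Your proposal is correct in substance, but it organizes the equivalences differently from the paper and handles the two nontrivial steps by genuinely different arguments, so a comparison is worthwhile. The paper's scheme is $\text{(iii)}\Rightarrow\text{(ii)}\Rightarrow\text{(i)}\Rightarrow\text{(iii)}$ together with $\text{(i)}\Rightarrow\text{(v)}\Rightarrow\text{(iv)}\Rightarrow\text{(iii)}$. Its hard point is $\text{(i)}\Rightarrow\text{(v)}$, where the confinement $\varphi_{u,t}(K)\subset r\UD$ for all $(u,t)\in\Delta([0,T])$ (inequality~\eqref{EQ_rr}) is obtained by a contradiction/normal-families argument; and its $\text{(iv)}\Rightarrow\text{(iii)}$ avoids the vector-field machinery entirely, applying Corollary~\ref{C_pseudo_symm} with the \emph{moving} reference points $w_j(s)=\varphi_{s,T}(z_j)$, which are of class $AC^d$ precisely because of~(iv). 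You instead make $\text{(iii)}\Rightarrow\text{(iv)}$ the main step and prove the same confinement constructively: partition $[0,T]$ so that $\rho_\UD\big(\varphi_{u,t}(0),0\big)<1/2$ on each subinterval (legitimate, since the constant in Corollary~\ref{C_pseudo_symm} is fixed before the partition is chosen, the corollary being applied only at $\zeta=0$, and the smallness comes from absolute continuity of $\int k_{K,T}$), then use Schwarz--Pick plus the triangle inequality for the hyperbolic metric so that each factor in the REF2 factorization of $\varphi_{a,c}$ across the partition increases hyperbolic distance to the origin by at most $\mathrm{arctanh}(1/2)$; after at most $N$ factors the orbit of a compact set stays in a fixed compact subset of~$\UD$. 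This is sound and more quantitative than the paper's compactness argument. Your $\text{(iv)}\Rightarrow\text{(iii)}$ via \cite[Theorem~6.6]{BCM1} and a Cauchy estimate is also fine, because there $G_T$ is evaluated at $z\in K$ itself, so WHVF3 applies directly; the paper's moving-point trick buys the same conclusion without invoking Theorem~\ref{TH_BCM-EF-VF}.

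One step is under-justified as written: $\text{(iv)}\Rightarrow\text{(v)}$ via $\partial_t\varphi^T_{s,t}(z)=G_T\big(\varphi^T_{s,t}(z),t\big)$. Condition WHVF3 bounds $G_T$ only on compact subsets of~$\UD$, and here $G_T$ is evaluated at the image point $\varphi^T_{s,t}(z)$, which is exactly the confinement issue you were careful about in the main step; the remark about Cauchy's estimate is irrelevant to this formula. The slip is harmless because the implication is redundant in your own scheme: the estimate you actually establish in the main step, namely $|\varphi_{a,c}(z)-\varphi_{b,c}(z)|\le \mathrm{const}\cdot\int_b^a k_{K,T}$ uniformly for $z$ in a compact set (since $z$ enters only through $w=\varphi_{a,c}(z)\in K^*$), is precisely condition~(v) after renaming the indices, so $\text{(iii)}\Rightarrow\text{(v)}$ comes for free and the cycle closes without the $\partial_t$-formula. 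Either say this explicitly, or first prove uniform confinement for the evolution family (by the same partition trick) before invoking WHVF3 at the image points.
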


\begin{remark}
Before stating the proofs we would like to mention that the families
$\{B(a,b;\cdot)\}_{0\le a\le b}$ introduced in~\cite{Bauer} and referred there
to as {\it semigroups associated with chordal Loewner families} are a special
case of reverse evolution families of order~$d=+\infty$. Accordingly, the
results we have stated in this section extend Theorems 5.4 and 5.5
from~\cite{Bauer}.
\end{remark}

The equivalence of (i), (iii), and (iv) in Proposition~\ref{PR_evol_equiv},
being formulated in the ``increasing" context, immediately leads to the
following statement of some independent interest.
\begin{corollary}
Let $d\in[1,+\infty]$. Suppose that a family $(\varphi_{s,t})_{0\le s\le t}$ of
holomorphic self-maps of~$\UD$ satisfies conditions EF1 and EF2 from
Definition\,\ref{D_BCM-EF}. Then the following conditions are equivalent:
\begin{mylist}
\item[(i)]  $(\varphi_{s,t})$ satisfies condition~EF3, i.e. $(\varphi_{s,t})$
is a evolution family of order~$d$.

\item[(ii)] for any $T>0$ there exist two distinct points $\zeta_j\in\UD$, $j=1,2$,
and a non-negative function $k_T\in L^{d}([0,T],\mathbb{R})$ such that for
$j=1,2$,
\begin{equation*}
|\varphi_{s,t}(\zeta_j)-\varphi_{u,t}(\zeta_j)|\leq\int_{s}^{u}k_{T}(\xi)d\xi
\end{equation*}
whenever $0\le s\le u\le t\le T$.

\item[(iii)] for any $T>0$ and any compact set $K\subset\mathbb{D}$  there exists a
non-negative function $k_{K,T}\in L^{d}([0,T],\mathbb{R})$ such that for all
$z\in K$,
\begin{equation*}
|\varphi_{s,t}(z)-\varphi_{u,t}(z)|\leq\int_{s}^{u}k_{K,T}(\xi)d\xi
\end{equation*}
whenever $0\le s\le u\le t\le T$.

\end{mylist}
\end{corollary}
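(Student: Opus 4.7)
The plan is to deduce this corollary from the equivalence $\text{(i)}\Leftrightarrow\text{(iii)}\Leftrightarrow\text{(iv)}$ in Proposition~\ref{PR_evol_equiv} by means of the time-reversal construction. Given a family $(\varphi_{s,t})$ satisfying EF1 and EF2 on $\Delta\big([0,+\infty)\big)$, for each $T>0$ I would introduce the time-reversed family
\[
\psi^T_{s,t}:=\varphi_{(T-t)_+,\,(T-s)_+},\qquad (s,t)\in\Delta\big([0,+\infty)\big),
\]
where $x_+:=\max(x,0)$. On $\Delta([0,T])$ this is simply $\psi^T_{s,t}=\varphi_{T-t,T-s}$, while outside the triangle it is $\varphi_{0,T-s}$ (for $s\le T\le t$) or $\id_{\UD}$ (for $T\le s\le t$). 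A direct computation from EF1 and EF2 for $\varphi$ shows that $(\psi^T_{s,t})$ satisfies REF1 and REF2, so Proposition~\ref{PR_evol_equiv} is applicable to $\psi^T$.

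The heart of the argument is the observation that conditions (i), (ii), (iii) of the corollary correspond, via the substitution $a:=T-t$, $b:=T-u$, $c:=T-s$ (mapping $\Delta([0,T])$ onto itself) together with the reflected majorant $\tilde k(\eta):=k(T-\eta)$, to conditions (iv), (i), (iii) of Proposition~\ref{PR_evol_equiv} applied to $\psi^T$. For the regularity conditions a short computation gives
\[
|\psi^T_{s,u}(\zeta)-\psi^T_{s,t}(\zeta)|=|\varphi_{b,c}(\zeta)-\varphi_{a,c}(\zeta)|\quad\text{and}\quad\int_u^t k(\xi)\,d\xi=\int_a^b\tilde k(\eta)\,d\eta,
\]
so that Prop~(i) and Prop~(iii) for $\psi^T$ transcribe verbatim into Cor~(ii) and Cor~(iii) for $\varphi$ (and $\tilde k\in L^d([0,T],\R)$ iff $k\in L^d([0,T],\R)$). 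For the correspondence between Cor~(i) and Prop~(iv) for $\psi^T$, if $\varphi$ itself is an evolution family of order $d$, the explicit shift $\tilde\varphi^{T'}_{a,b}:=\varphi_{(a-(T'-T))_+,\,(b-(T'-T))_+}$ is checked directly to be an evolution family of order $d$ providing the representation required by Prop~(iv); conversely, choosing $T'=T$ in Prop~(iv) yields an evolution family $\tilde\varphi^T$ agreeing with $\varphi$ on $\Delta([0,T])$, so $\varphi$ inherits the EF3 bound on $[0,T]$, and since $T>0$ is arbitrary $\varphi$ satisfies EF3 globally.

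With these correspondences in place, the equivalences of the proposition transfer directly to those of the corollary. The main bookkeeping, and the step that I expect to require most care, is the handling of the extensions of $\psi^T$ outside $\Delta([0,T])$ and of $\tilde\varphi^{T'}$ when $T'\ne T$, so that they fit into the $[0,+\infty)$ framework of Definitions~\ref{D_BCM-EF} and~\ref{D_Rev_EF}; but these verifications reduce to routine manipulations of EF1 and EF2 together with changes of variable in the $L^d$-bounds, so no genuinely new idea beyond Proposition~\ref{PR_evol_equiv} is required.
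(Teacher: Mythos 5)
Your proposal is correct and follows essentially the same route as the paper, which derives the corollary precisely by reformulating the equivalence of conditions (i), (iii) and (iv) of Proposition~\ref{PR_evol_equiv} in the ``increasing'' context via the time reversal $s\mapsto T-s$; your matching of the corollary's (i), (ii), (iii) with the proposition's (iv), (i), (iii) for the reversed families, together with the cut-off/shift constructions handling the extension beyond $\Delta([0,T])$, supplies exactly the routine bookkeeping the paper leaves implicit.
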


\subsection{Proof of Proposition~\ref{PR_evol_equiv}}
Recall that by $\rho_\UD(\cdot,\cdot)$ we denote the pseudohyperbolic distance
in $\UD$. Note that $$|w-z|\le2\rho_\UD(z,w)$$ for any $z,w\in\UD$. Hence
combining conditions REF1 and REF2, the Schwarz\,--\,Pick lemma, and
Corollary~\ref{C_pseudo_symm}, we deduce that for any compact set $K\subset\UD$
and any two distinct points $\zeta_1,\zeta_2\in\UD$ there exists
${M=M(K,\zeta_1,\zeta_2)>0}$ such that
\begin{multline}\label{EQ_step1}
\big|\varphi_{s,t}(z)-\varphi_{s,u}(z)\big|\le2\rho_\UD\big(\varphi_{s,t}(z),\varphi_{s,u}(z)\big)=
2\rho_\UD\big(\varphi_{s,u}(\varphi_{u,t}(z)),\varphi_{s,u}(z)\big)\\\le
2\rho_\UD\big(\varphi_{u,t}(z),z\big)\le
M\big(|\varphi_{u,t}(\zeta_1)-\zeta_1|+|\varphi_{u,t}(\zeta_2)-\zeta_2|\big)
\end{multline}
for all $z\in K$ and all $s,u,t\ge0$ such that $s\le u\le t$.

Obviously, $\text{(iii)}\Longrightarrow\text{(ii)}\Longrightarrow\text{(i)}$.
Moreover, substituting $(u,u,t)$ for $(s,u,t)$ in REF3 and bearing in
mind~\eqref{EQ_step1}, one can easily see that (i) implies (iii).

Let us prove that $\text{(iv)}\Longrightarrow\text{(iii)}$.
Fix any $T>0$ and a compact set $K\subset\UD$. Choose any two distinct points
$z_1,z_2\in\UD$.  Denote $w_j(s):=\varphi_{s,T}(\zeta_j)$ for all $s\in[0,T]$
and $j=1,2$. From REF2 it follows that for any $(u,t)\in\Delta\big([0,T]\big)$
and $j=1,2$,
\begin{equation}\label{EQ_j}
\big|\varphi_{u,t}\big(w_j(t)\big)-w_j(t)\big|=\big|w_j(u)-w_j(t)\big|.
\end{equation}
By (iv) there exists an evolution family~$(\varphi_{s,t}^T)$ of order~$d$ such
that $\varphi_{s,t}=\varphi_{T-t,T-s}^T$ for all
$(s,t)\in\Delta\big([0,T]\big)$. In particular, $w_j(s)=\varphi^T_{0,T-s}(z_j)$
for all $s\in[0,T]$ and $j=1,2$. Hence the functions $w_1$ and $w_2$ are of
class $AC^d$ on $[0,T]$. Moreover, with all $\varphi_{s,t}^T$'s being univalent
by Remark~\ref{RM_EF-uni}, these two functions do not share common values. In
particular, it follows that there exist $R\in(0,1)$ and $\rho>0$ such that
$|w_j(t)|\le R$, $j=1,2$, and $\rho_\UD\big(w_1(t),w_2(t)\big)\ge \rho$ for all
$t\in[0,T]$. Apply now Corollary~\ref{C_pseudo_symm} with $w_j(t)$ substituted
for~$\zeta_j$ to deduce from~\eqref{EQ_j} that there is a constant
$M_1=M_1(K,T,z_1,z_2)>0$ such that
\begin{equation}\label{EQ_byREF3j}
\rho_\UD\big(\varphi_{u,t}(z),z\big)\le
M_1\big(|w_1(t)-w_1(u)|+|w_2(t)-w_2(u)|\big)
\end{equation}
for all $(u,t)\in\Delta\big([0,T]\big)$ and all $z\in K$.
Inequalities~\eqref{EQ_step1} and~\eqref{EQ_byREF3j} imply together that
\begin{equation*}
\big|\varphi_{s,t}(z)-\varphi_{s,u}(z)\big|\le
2M_1\big(|w_1(t)-w_1(u)|+|w_2(t)-w_2(u)|\big)
\end{equation*}
for all $(u,t)\in\Delta\big([0,T]\big)$ and all $z\in K$. Recalling that
$w_1,w_2\in AC^d\big([0,T],\UD\big)$, we see that assertion~(iii) holds with
$$ k_{K,T}(\xi):=2M_1\big(|
w_1'(\xi)|+|w_2'(\xi)|\big).$$

Now assume (i) and let us
prove (v). To this end we again fix any $T>0$ and any compact set
$K\subset\UD$. Let us show first that from (i) it follows that there exists
$r\in(0,1)$ such that
\begin{equation}\label{EQ_rr}
\varphi_{u,t}(K)\subset r\UD\quad\text{ for all
$(u,t)\in\Delta\big([0,T]\big)$.}
\end{equation}
Assume on the contrary that there exists a sequence
$\big((u_n,t_n)\big)\subset\Delta\big([0,T]\big)$ such that
$\sup|\varphi_{u_n,t_n}(K)|\to 1$ as $n\to+\infty$. Without loss of generality
we may also assume that $(u_n,t_n)\to(u_0,t_0)$ as $n\to+\infty$ for some
$(u_0,t_0)\in\Delta\big([0,T]\big)$.

Note that by (i),
\begin{equation}\label{EQ_from(i)}
|\varphi_{u,t}(\zeta_j)-\zeta_j|\le\omega(t-u)\quad\text{for $j=1,2$ and for
any $(u,t)\in\Delta\big([0,T]\big)$,}
\end{equation}
where  $\omega(\cdot)$ stands for the modulus of continuity of $[0,T]\ni
t\mapsto\int_{0}^tk_T(\xi)d\xi$. Hence from~\eqref{EQ_step1} it follows that
$\varphi_{u_n,t_n}-\varphi_{u_n,t_0}\to0$ uniformly on $K$ as $n\to+\infty$.
Therefore
\begin{equation}\label{EQ_to1}
\sup|\varphi_{u_n,t_0}(K)|\to 1\quad\text{ as $n\to+\infty$.}
\end{equation}
Now choose $\delta>0$ in such a way that
$\omega(2\delta)<\varepsilon_0:=\big(1-|\zeta_1|\big)/2$, and let
$n_0\in\Natural$ be such that $|u_n-u_0|<\delta$ for all natural $n\ge n_0$.
Then $|\varphi_{u_n,u_0+\delta}(\zeta_1)|<1-\varepsilon_0$ for all $n>n_0$.
Bearing in mind that $\varphi_{u_n,u_0+\delta}(\UD)\subset\UD$ for all
$n\in\Natural$, we conclude that $\{\varphi_{u_n,u_0+\delta}:n>n_0\}$ is
relatively compact in $\Hol(\UD,\UD)$. In view of the fact that by REF2,
$\varphi_{u_n,t_0}=\varphi_{u_n, u_0+\delta}\circ\varphi_{u_0+\delta,t_0}$ for
all $n>n_0$, this contradicts~\eqref{EQ_to1} and thus proves~\eqref{EQ_rr} for
some $r\in(0,1)$ depending on the compact set $K$ and $T>0$.

Now taking advantage of REF2, we get
$$%
\big|\varphi_{s,t}(z)-\varphi_{u,t}(z)\big|=|\varphi_{s,u}(\zeta)-\zeta|,\quad
\zeta:=\varphi_{u,t}(z)
$$%
for any $z\in\UD$ and any $s,u,t\ge0$ such that $s\le u\le t$.
Using~\eqref{EQ_rr} we apply Corollary~\ref{C_pseudo_symm} to conclude that
there exists $M_2=M_2(T,K,\zeta_1,\zeta_2)>0$ such that
$$%
\big|\varphi_{s,t}(z)-\varphi_{u,t}(z)\big|\le
M_2\big(|\varphi_{u,t}(\zeta_1)-\zeta_1|+|\varphi_{u,t}(\zeta_2)-\zeta_2|\big)
$$%
whenever $z\in K$ and $0\le s\le u\le t\le T$. Now it follows easily
from~\eqref{EQ_qqPR} with $(u,u,t)$ substituted for $(s,u,t)$ that there exists
a non-negative function $\hat k_{K,T}:=M_2 k_T\in L^d\big([0,T], \Real\big)$
such that for all $z\in K$,
\begin{equation}\label{EQ_hatK}
|\varphi_{s,t}(z)-\varphi_{u,t}(z)|\leq\int_{s}^{u}\hat k_{K,T}(\xi)d\xi
\end{equation}
whenever $0\le s\le u\le t\le T$.

To complete the proof assume (v) and let us prove (iv).
Write
\begin{equation}\label{EQ_REF-EF}
\varphi^T_{s,t}:=\left\{
\begin{array}{ll}
\varphi_{T-t,T-s},&\text{if $0\le s\le t\le T$},\\
\varphi_{0,T-s},&\text{if $0\le s\le T\le t$},\\
\id_{\UD},&\text{if $T\le s\le t$}.\\
\end{array}
\right.
\end{equation}
In other words, for all $s\ge0$ and all $t\ge s$, we have
$\varphi_{s,t}^T=\varphi_{\tau(t),\tau(s)}$, where $\tau(t)$ stands for
${\max\{0,T-t\}}$.

It is easy to see that $(\varphi^T_{s,t})$ satisfies conditions EF1 and EF2 in
Definition~\ref{D_BCM-EF}. Moreover, combining \eqref{EQ_hatK}
and~\eqref{EQ_REF-EF},  for all $z\in\UD$ and all $s,u,t$ such that $0\le s\le
u\le t$ we have
\begin{equation*}
\big|\varphi_{s,t}^T(z)-\varphi_{s,u}^T(z)\big|\le
\big|\varphi_{\tau(t),\tau(s)}(z)-\varphi_{\tau(u),\tau(s)}(z)\big|\le\int_{u}^t\tilde
k_{K,T}(\xi),
\end{equation*}
where $\tilde k_{K,T}(\xi):=k_{K,T}(T-\xi)$ for all $\xi\in[0,T]$ and
$\tilde k_{K,T}(\xi):=0$ for all $\xi>T$.  It follows that $(\varphi_{s,t}^T)$
satisfies EF3. Thus (iv) is true.

Finally, using the implication $\text{(ii)}\Longrightarrow\text{(iv)}$ we can
easily conclude that all the elements of any reverse evolution family is a
univalent function in~$\UD$, since by Remark~\ref{RM_EF-uni} this is the case
for elements of evolution families. Now the proof of
Proposition~\ref{PR_evol_equiv} is complete. \proofbox

\subsection{Proof of Theorem~\ref{TH_REF_DC}} Assume first that we are
given a decreasing Loewner chain~$(f_t)$ of order $d$. Fix any $T>0$. Then the
family $(h_t^T)_{t\ge0}$ defined by
\begin{equation}\label{EQ_h_t}
h_t^T:=\left\{
\begin{array}{ll}
 f_{T-t}, & \text{if $t\in[0,T]$},\\
 \id_\UD, & \text{if $t\in(T,+\infty)$},\\
\end{array}\right.
\end{equation}
is an (increasing) Loewner chain. Hence by Theorem~\ref{TH_SMP-EF-LC}, the
formula $\varphi_{s,t}^T:=h_t^{-1}\circ h_s$ for all $s\ge0$ and all $t\ge s$
is an evolution family of order~$d$. Note that for all
$(s,t)\in\Delta\big([0,T]\big)$ we have $\varphi_{s,t}=\varphi^T_{T-t,T-s}$,
where $(\varphi_{s,t})$ is defined by~\eqref{EQ_REF_DC}. Thus, bearing in mind
that $T>0$ can be chosen arbitrarily, we conclude that by
Proposition~\ref{PR_evol_equiv}, $(\varphi_{s,t})$ is a reverse evolution
family of order~$d$.

To prove the converse statement we assume now that $(\varphi_{s,t})$ a reverse
evolution family of order $d$. We have to show that $f_t:=\varphi_{0,t}$ is a
decreasing Loewner chain of order~$d$ satisfying~\eqref{EQ_REF_DC}. Recall that
by Proposition~\ref{PR_evol_equiv}, the functions $\varphi_{s,t}$ are univalent
in~$\UD$. It follows that $(f_t)$ satisfies LC1. By the same reason, REF2
implies~\eqref{EQ_REF_DC}. Moreover, REF2 implies also that $(f_t)$ satisfies
LC2. Finally, LC3 follows immediately from assertion~(iii) of
Proposition~\ref{PR_evol_equiv}. The proof is complete.\proofbox

\subsection{Proof of Theorem~\ref{TH_REF-EQS}}
We start with the proof of~(i). Assume at first that we are given a reverse
evolution family~$(\varphi_{s,t})$ of order~$d$ and let us prove that there
exists a Herglotz vector field of order~$d$ that generate $(\varphi_{s,t})$
via~\eqref{EQ_LK1-INI}. To this end we fix $T>0$ and apply
Proposition~\ref{PR_evol_equiv}, according to which there exists an evolution
family~$(\varphi_{s,t}^T)$ of order~$d$ such that
$\varphi_{s,t}=\varphi_{T-t,T-s}^T$ whenever $0\le s\le t\le T$. In turn,
according to Theorem~\ref{TH_BCM-EF-VF} there exists a Herglotz vector field
$G_T$ of order~$d$ such that for any $s\ge0$ and any $z\in\UD$ the function
$[s,+\infty)\ni t\mapsto w(t):=\varphi_{s,t}^T(z)$ is the unique solution to
the equation $dw/dt=G_T(w,t)$, $t\ge s$, with the initial condition~$w(s)=z$.
It follows that for each $t\in(0,T]$ and each $z\in\UD$ the function $[0,t]\ni
s\mapsto w_{z,t}(s):=\varphi_{s,t}(z)$ is the unique solution to the initial
value problem
\begin{equation}\label{EQ_ini_REF-EF}
\frac{dw}{ds}=-G_T(w,T-s),~~s\in[0,t],\quad w(t)=z.
\end{equation}
Since the functions $(z,s)\mapsto w_{z,t}(s)$ define $G_T$
via~\eqref{EQ_ini_REF-EF} uniquely up to a null-set in $[0,T]$, there exists a
function $G:\UD\times[0,+\infty)\to\Complex$ such that for each $n\in\Natural$,
$G(\cdot,t)=G_n(\cdot,n-t)$ for a.e. $t\in[0,n]$. Clearly, $G$ is the desired
Herglotz vector field of order~$d$.

Now we pass to the converse statement. So assume that we are given a Herglotz
vector field~$G$ of order~$d$ and let us prove that it generates a reverse
evolution family $(\varphi_{s,t})$ of order~$d$. Again fix any $T>0$. Arguing
as in the proof of Theorem~\ref{TH_G-f_t}(ii), one can construct an evolution
family~$(\varphi_{s,t}^T)$ of order~$d$ such that for each $z\in\UD$ and each
$t\in(0,T]$ the function $[0,t]\ni s\mapsto w_{z,t}(s):=\varphi_{T-t,T-s}^T(z)$
is the unique solution to the initial value problem~\eqref{EQ_LK1-INI}. Note
that by uniqueness of the solution, $w_{z,t}$ does not depend on $T$. Hence
there exists a unique family $(\varphi_{s,t})_{0\le s\le t}$ of holomorphic
functions $\varphi_{s,t}:\UD\to\UD$ such that
\begin{equation}\label{EQ_defREFviaEF}
\varphi_{s,t}(z)=w_{z,t}(s)=\varphi^T_{T-t,T-s}(z)
\end{equation}
for all $z\in\UD$, all $(s,t)\in\Delta\big([0,+\infty)\big)$, and all $T\ge t$.

Take now any $u,s,t\ge0$ such that $u\le s\le t$. Choose any $T\ge t$. Then
combining~\eqref{EQ_defREFviaEF} with conditions EF1 and EF2 for
$(\varphi_{s,t}^T)$ we easily obtain REF1 and REF2. Furthermore, applying now
Proposition~\ref{PR_evol_equiv} we conclude that REF3 holds as well. Hence
$(\varphi_{s,t})$ is a reverse evolution family of order~$d$. To complete the
proof of~(i) it remains to recall that by~\eqref{EQ_defREFviaEF}, for each
$t\ge0$ and each $z\in\UD$, $[0,t]\ni s\mapsto\varphi_{s,t}(z)$ is the unique
solution to~\eqref{EQ_LK1-INI}.

Now let us prove (ii). First let us assume that $F(z,t):=f_t(z)$
solves~\eqref{EQ_LK2-DEC}. Then by Theorem~\ref{TH_G-f_t}, $(f_t)$ is a
decreasing Loewner chain of order $d$, with the functions
$w_\zeta(s):=f_s^{-1}(\zeta)$, where $\zeta\in\UD$, being solutions, on their
domains of definition, to the equation $dw/ds=G(w,s)$. For each $z\in\UD$ and
each $t>0$, set $\zeta:=\zeta(z,t)=f_t(z)$. Then by condition DC2 in
Definition~\ref{D_decLC}, $w_{\zeta(z,t)}$ is defined for all $s\in[0,t]$.
Moreover, it satisfies the initial condition~$w_{\zeta(z,t)}(t)=z$. By the
uniqueness of the solution to~\eqref{EQ_LK1-INI}, it follows that
$w_{\zeta(z,t)}(s)=\varphi_{s,t}(z)$ for all $z\in\UD$ and all
$(s,t)\in\Delta\big([0,+\infty)\big)$. On the other hand, by construction
$w_{\zeta(z,t)(s)}=f_s^{-1}\circ f_t(z)$ for all such $z$, $s$ and $t$. Thus
the decreasing Loewner chain is associated with the reverse evolution family
$(\varphi_{s,t})$.

It remains to prove the converse statement. So we assume that $(f_t)$ is the decreasing
Loewner chain of order~$d$ associated with~$(\varphi_{s,t})$. We have to show
that $F(z,t):=f_t(z)$ solves~\eqref{EQ_LK2-DEC}. By Theorem~\ref{TH_f_t-G},
the function $F$ is a solution to the generalized Loewner\,--\,Kufarev PDE
$$
\frac{\partial F(z,t)}{\partial t}=\frac{\partial F(z,t)}{\partial
z}G^*(z,t),\quad t\ge0,~~z\in\UD,
$$
for some Herglotz vector field $G^*$, which {\it a priori} can be different
from $G$. We have to prove that actually $G^*$ and $G$ coincide aside a null
set on the $t$-axis. To this end consider the reverse evolution
family~$(\varphi^*_{s,t})$ generated in the sense of statement~(i) of the
theorem we are proving by the Herglotz vector field~$G^*$. By what we have
already showed, $(f_t)$ must be the Loewner chain associated
with~$(\varphi^*_{s,t})$. By the very definition, this means that
$(\varphi^*_{s,t})=(\varphi_{s,t})$. Now the uniqueness of the Herglotz vector
field in statement~(i) implies that $G$ and $G^*$ essentially coincide. This
completes the proof. \proofbox

\section{Two-point characterization of Loewner chains}\label{S2}
The regularity of the $L^d$-Loewner chains, both increasing and decreasing,
w.r.t. the time parameter~$t$ is described by (literally identical) conditions
LC3 and DC3, requiring that the Loewner chain, considered as a mapping
$[0,+\infty)\ni t\mapsto f_t\in\Hol(\UD,\Complex)$, must be locally absolutely
continuous. At the same time in the classical theory, both in chordal and
radial variants, the regularity w.r.t. $t$ is achieved by controlling a unique
increasing parameter, such as $f'_t(0)$ in the radial case.
Theorem~\ref{TH_LC}, which we are going to prove in this section, states that
condition LC3 can replaced by an {\it a priori} weaker condition in the spirit
of the classical theory. This theorem has an immediate consequence for
decreasing Loewner chains.

\begin{corollary}\label{C_wDC}
Let $(f_t)_{t\ge0}$ be a family of functions satisfying conditions DC1 and DC2
from Definition~\ref{D_decLC} and let $d\in[1,+\infty]$. Then $(f_t)$ is a
decreasing Loewner chain of order~$d$ if and only if for every $T>0$ there
exist two distinct points $\zeta_1,\zeta_2\in\UD$ such that the functions
$t\mapsto w_j(t):=f_t(\zeta_j)$ belong for $j=1,2$ to the class
$AC^d\big([0,T],\Complex\big)$.
\end{corollary}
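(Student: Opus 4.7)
The plan is to reduce the corollary to Theorem \ref{TH_LC} by the time-reversal trick already used in the proof of Theorem \ref{TH_f_t-G}; this reduction is what the Remark following Theorem \ref{TH_LC} refers to informally, and our task is to make it rigorous. The forward implication is immediate: if $(f_t)$ is a decreasing $L^d$-Loewner chain, then applying DC3 to the singleton compact $K=\{\zeta\}$ shows that $t\mapsto f_t(\zeta)\in AC^d([0,T],\UD)$ for every $\zeta\in\UD$, so any two distinct points will serve as $\zeta_1,\zeta_2$.

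For the nontrivial direction, fix $T>0$ together with the two distinct points $\zeta_1,\zeta_2$ furnished by the hypothesis, and form the time-reversed family
\[
h_t^T := \begin{cases} f_{T-t}, & t\in[0,T], \\ \id_\UD, & t>T. \end{cases}
\]
Using DC1 and DC2 one verifies directly that $(h_t^T)_{t\ge 0}$ satisfies conditions LC1 and LC2 of Definition \ref{D_SMP-LC}; indeed, the inclusion $f_{T-t}(\UD)\subset f_{T-s}(\UD)$ for $0\le s\le t\le T$ is exactly DC2 read backwards, and the extension to $t>T$ uses $f_0=\id_\UD$ (hence $h_s^T(\UD)\subset\UD=h_t^T(\UD)$). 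Since $\xi\mapsto T-\xi$ preserves the class $AC^d$, the maps $t\mapsto h_t^T(\zeta_j)$ belong to $AC^d([0,T'],\C)$ for every $T'>0$ (the extension beyond $T$ is trivial as $h_t^T$ is constant there). Thus $(h_t^T)$ satisfies condition LC3w, and Theorem \ref{TH_LC} implies that it is an $L^d$-Loewner chain.

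In particular, LC3 holds for $(h_t^T)$: for every compact $K\subset\UD$ there is a non-negative $k_{K,T}\in L^d([0,T],\R)$ with $|h_s^T(z)-h_u^T(z)|\le\int_s^u k_{K,T}(\xi)\,d\xi$ for all $z\in K$ and $0\le s\le u\le T$. Undoing the time reversal (setting $s'=T-u$, $u'=T-s$ and substituting $\xi\mapsto T-\xi$ in the integral) yields DC3 for $(f_t)$ on $[0,T]$ with weight $\tilde k_{K,T}(\eta):=k_{K,T}(T-\eta)$. Since $T>0$ was arbitrary, $(f_t)$ is a decreasing $L^d$-Loewner chain. The whole argument is bookkeeping organized around the change of variable; the nontrivial input is Theorem \ref{TH_LC} itself, and there is no real obstacle beyond keeping the direction of the inequalities straight.
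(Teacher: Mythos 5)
Your proof is correct and is exactly the argument the paper has in mind: the authors omit the proof of Corollary~\ref{C_wDC}, stating that it follows from Theorem~\ref{TH_LC} by the change of variable $t\mapsto T-t$, and your write-up simply carries out that reduction (forming $h_t^T$, checking LC1, LC2, LC3w, invoking Theorem~\ref{TH_LC}, and reversing time to recover DC3) with all the bookkeeping done correctly.
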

Since Corollary~\ref{C_wDC} follows directly from Theorem~\ref{TH_LC} by means
of the change of variable $t\mapsto T-t$, we omit the proof.
\begin{proof}[{\bf Proof of Theorem~\ref{TH_LC}}]
The implication~$\text{LC3}\Longrightarrow\text{LC3w}$ is obvious. So assume
that $(f_t)$ satisfies LC1, LC2, and LC3w and let us prove that $(f_t)$ is a
Loewner chain of order~$d$.

Fix any $T>0$. For $(s,t)\in\Delta([0,T])$ consider
$\varphi_{s,t}:=f_t^{-1}\circ f_s:\UD\to\UD$. Since  ${f_t(\UD)\subset
f_T(\UD)}$ for all $t\in[0,T]$ and the map $[0,T]\ni t\mapsto f_t(\zeta_1)$ is
continuous, the set $\{f_t:t\in[0,T]\}$ is relatively compact in
$\Hol(\UD,\Complex)$. Therefore applying Lemma~\ref{LM_phi-nepr} for
$K:=\{\zeta_1,\zeta_2\}$, $z_2=z_2(s,t):=\varphi_{s,t}(z_1)$, $z_1:=\zeta_j$,
we get
\begin{equation}\label{EQ_phi-z0}
|\varphi_{s,t}(\zeta_j)-\zeta_j|\le M |f_t(\zeta_j)-f_s(\zeta_j)|,\quad j=1,2,
\end{equation}
for all $(s,t)\in\Delta([0,T])$, where $M:=M_{\{\zeta_1,\zeta_2\}}>0$ does not
depend on~$s$ and~$t$.

Combining \eqref{EQ_phi-z0} with Corollary~\ref{C_pseudo_symm} for
$\varphi:=\varphi_{s,t}$ and taking into account that
$$|w-z|\le2\rho_\UD(w,z)$$ for all $z,w\in\UD$, we conclude that for each
$r\in(0,1)$,
\begin{multline}\label{EQ_phi-K}
\big|\varphi_{s,t}(\zeta)-\zeta\big|\le 2\tilde
C(r,R_0,\rho_0)\big(|\varphi_{s,t}(\zeta_1)-\zeta_1|+|\varphi_{s,t}(\zeta_2)-\zeta_2|\big)\\\le
C_0\big(|f_t(\zeta_1)-f_s(\zeta_1)|+|f_t(\zeta_2)-f_s(\zeta_2)|\big), \qquad \qquad
\end{multline}
whenever $|\zeta|\le r$ and $(s,t)\in\Delta([0,T])$, where $C_0:=2M \tilde
C(r,R_0,\rho_0),$ $R_0:=\max\{|\zeta_1|,|\zeta_2|\},$ and
$\rho_0:=\rho_\UD(\zeta_2,\zeta_1)$.

Since the mappings $[0,T]\ni t\mapsto f_t(\zeta_j)$, $j=1,2$,  are continuous,
inequality~\eqref{EQ_phi-K} implies that for each $r\in(0,1)$ there exists
$\delta>0$ such that $|\varphi_{s,t}(\zeta)|\le r':=(1+r)/2$ for all $\zeta$
with $|\zeta|\le r$ and all $(s,t)\in\Delta([0,T])$ with $t<s+\delta$. Recall
that $\{f_t:t\in[0,T]\}$ is relatively compact in $\Hol(\UD,\Complex)$. It
follows that $|f_t'(z)|$ is uniformly bounded on the disk $\{z:|z|\le r'\}$. As
a result, from~\eqref{EQ_phi-K} we get
\begin{multline}\label{EQ_ftfs}
|f_t(\zeta)-f_s(\zeta)|\le M_1|\varphi_{s,t}(\zeta)-\zeta|\le
C(r)\big(|f_t(\zeta_1)-f_s(\zeta_1)|+|f_t(\zeta_2)-f_s(\zeta_2)|\big),
\end{multline}
for any $\zeta$ with $|\zeta|\le r$ and any $(s,t)\in\Delta([0,T])$ with
$t<s+\delta$, where $C(r):=M_1C_0$ and  $M_1=M_1(r'):=\sup\big\{|f_t'(z)|:|z|\le r',~t\in[0,T]\big\}$.

Let $K$ be any compact set in~$\UD$. Choose $r\in(0,1)$ in the above argument
in such a way that $|\zeta|\le r$ for all $\zeta\in K$. Set
$k_{K,T}(\xi):=C(r)\big(k_1(\xi)+k_2(\xi)\big)$ for all $\xi\in[0,T]$, where
$k_j(t):=|df_t(\zeta_j)/dt|$ for $j=1,2$ and all $t\in[0,T]$. Obviously,
$k_{K,T}\ge0$ and belongs to $L^d_{\rm loc}\big([0,T],\Real\big)$.
From~\eqref{EQ_ftfs} it follows that whenever $0\le s\le t\le T$, $t<
s+\delta$, and $\zeta\in K$, we will have
\begin{equation}\label{EQ_LC3}
\big|f_t(\zeta)-f_s(\zeta)\big|\le\int_{s}^tk_{K,T}(\xi)d\xi.
\end{equation}
Using the triangle inequality in the right-hand side of~\eqref{EQ_LC3} and the
additivity of the integral in its left-hand side, it is easy to remove the
restriction $t<s+\delta$. Hence $(f_t)$ satisfies LC3 and thus it is Loewner
chain of order $d$.
\end{proof}

\end{document}